\documentclass{article}

\usepackage[margin=2.5cm]{geometry}
\usepackage{amsmath,amssymb,amsthm,mathtools}
\usepackage{mathrsfs}
\usepackage{hyperref}
\numberwithin{equation}{section}
\newtheorem{theorem}{Theorem}[section]
\newtheorem{lemma}[theorem]{Lemma}
\newtheorem{proposition}[theorem]{Proposition}
\newtheorem{corollary}[theorem]{Corollary}
\theoremstyle{definition}
\newtheorem{definition}[theorem]{Definition}
\theoremstyle{remark}
\newtheorem{remark}[theorem]{Remark}

\DeclareMathOperator{\dist}{dist}
\DeclareMathOperator{\Adm}{Adm}
\DeclareMathOperator{\Lip}{Lip}
\DeclareMathOperator{\supp}{supp}
\providecommand{\restriction}{\mathbin{\upharpoonright}}

\begin{document}

\title{Sharp Continuity Moduli for Dirichlet Heat Flow in Boundary-Reservoir Transport Metrics}

\author{Maja Gw\'o\'zd\'z\\ ETH Z\"{u}rich\\\texttt{mgwozdz@ethz.ch}}
\date{}

\maketitle

\begin{abstract}
Let \(\Omega\subset\mathbb R^n\) be a bounded \(C^2\) open set and let
\(P_t\) be the killed Dirichlet heat semigroup. We prove the sharp fixed-time power-scale modulus of \(P_t\) for the Figalli--Gigli boundary-reservoir transport distances \(W_{b,p}\). For every \(t>0\), \(P_t\) is globally Lipschitz with respect to \(W_{b,1}\). For every \(p>1\), and on every total-mass sublevel \(\{\mu:\mu(\Omega)\le m\}\), it is \(1/p\)-H\"older:
\[
W_{b,p}(P_t\mu,P_t\nu)^p
\le C_{t,p,m,\Omega} W_{b,p}(\mu,\nu).
\]
For \(p>1\), we show that the exponent \(1/p\) is optimal in the scale of power moduli. On the full finite-measure space, \(P_t\) is discontinuous at the zero measure. To establish the lower bound, we rely on the amplification of the boundary layer. More precisely, a unit mass initially placed at distance \(\varepsilon\) from \(\partial\Omega\) has input \(W_{b,p}\)-distance \(O(\varepsilon)\) from zero, whereas after any fixed positive time, its \(p\)-th boundary moment is bounded below by \(c\varepsilon\). As a result, in the quadratic case and in the original finite-measure \(W_{b,2}\) metric, there does not exist a standard finite-\(\lambda\)
\(\mathrm{EVI}_\lambda\) semigroup on a \(W_{b,2}\)-metric domain which would contain the affine constant-boundary data class and could restrict to the affine
constant-boundary Dirichlet heat flow. Finally, we also describe the corresponding lower-bound obstruction for smooth uniformly elliptic perturbations in divergence form.
\end{abstract}

\noindent\textbf{Keywords.} Dirichlet heat semigroup, boundary transport metrics, fixed-time modulus, boundary amplification, uniformly elliptic operators

\medskip
\noindent\textbf{2020 Mathematics Subject Classification.} 35K05, 35K20, 47D07, 49Q22, 35B65

\section{Introduction}
Figalli and Gigli \cite{FG10} introduced the boundary-reservoir distance for Dirichlet heat equations in optimal transport. In this geometry, unmatched mass may be
transported to or from the boundary, so, intuitively, the boundary acts as an infinite
reservoir. In the quadratic case, the entropy minimising-movement scheme yields the heat flow with a constant Dirichlet boundary value \cite[Theorem~3.5]{FG10}. 
Several variants of this idea have been developed in further works. These include, among others, the double-space or charged formulations in \cite{PS20}, transport-cost alternatives \cite{Mor18}, the dynamic representations of the Benamou--Brenier type \cite{EM}, the modified Jordan--Kinderlehrer--Otto (JKO) schemes for Fokker--Planck equations with Dirichlet boundary data \cite{Qua26}, and the porous-medium minimising-movement schemes \cite{KKS}.

In this paper, we describe a fixed-time obstruction inside the boundary-extension metric. We ask whether, for fixed \(t>0\), the killed Dirichlet heat semigroup \(P_t\) acts regularly on \((\mathcal M(\Omega),W_{b,p})\). Our answer is sharp. Indeed, at \(p=1\), we show that \(P_t\) is globally Lipschitz. At every \(p>1\), the mass-bounded modulus is not Lipschitz but exactly \(1/p\)-H\"older. Moreover, the map is discontinuous at the zero measure on the full finite-measure space. For \(m>0\), we write
\[
\mathcal M_{\le m}(\Omega)
:=\{\mu\in\mathcal M(\Omega):\mu(\Omega)\le m\}.
\]
In the introduction, \(P_t\) on finite measures denotes the killed Dirichlet heat semigroup that acts by duality on \(C_0(\Omega)\). We also set
\[
C_{\partial\Omega}(\overline\Omega)
:=\{f\in C(\overline\Omega):f|_{\partial\Omega}=0\}.
\]
Therefore,
\[
\int_\Omega \varphi\,d(P_t\mu)
=
\int_\Omega P_t\varphi\,d\mu,
\qquad
\varphi\in C_{\partial\Omega}(\overline\Omega).
\]
We recall the formal definition in Definition~\ref{def:Pt-on-measures}. For the Laplacian with fixed constant boundary value \(c\), we write \(S_t\) when \(c\) is fixed in the context and \(S_t^c\) when we want to emphasise the dependence on \(c\). For the elliptic operator \(\mathcal L\), we denote the analogous affine map by \(S_t^{\mathcal L}\).

\begin{theorem}[Sharp fixed-time modulus for the killed Dirichlet heat flow]
\label{thm:sharp-modulus}
Let \(\Omega\subset\mathbb R^n\) be a nonempty bounded \(C^2\) open set and let
\(P_t\) be the killed Dirichlet heat semigroup. Let \(t>0\) be fixed.
\begin{enumerate}
\item There exists \(L_{t,\Omega}<\infty\) such that
\[
W_{b,1}(P_t\mu,P_t\nu)
\le L_{t,\Omega} W_{b,1}(\mu,\nu)
\qquad\forall \mu,\nu\in\mathcal M(\Omega).
\]
\item Let \(p>1\) and \(m>0\). In this case, there exists \(C_{t,p,m,\Omega}<\infty\) such that
\[
W_{b,p}(P_t\mu,P_t\nu)^p
\le
C_{t,p,m,\Omega} W_{b,p}(\mu,\nu)
\qquad
\forall \mu,\nu\in\mathcal M_{\le m}(\Omega).
\]
In other terms, \(P_t\) is \(1/p\)-H\"older on every total-mass sublevel:
\[
W_{b,p}(P_t\mu,P_t\nu)
\le
C_{t,p,m,\Omega}^{1/p}
W_{b,p}(\mu,\nu)^{1/p}.
\]
\item For every \(p>1\), the H\"older exponent \(1/p\) is sharp in the scale
of power moduli. More precisely, for every \(\alpha>1/p\) and every \(m>0\),
\[
\sup_{\substack{\mu,\nu\in\mathcal M_{\le m}(\Omega)\\ \mu\ne\nu}}
\frac{W_{b,p}(P_t\mu,P_t\nu)}
     {W_{b,p}(\mu,\nu)^\alpha}
=+\infty .
\]
In particular, \(P_t\) has infinite \(W_{b,p}\)-Lipschitz constant on \(\mathcal M_{\le m}(\Omega)\).
\item On the full finite-measure space \((\mathcal M(\Omega),W_{b,p})\), the map
\(P_t\) is discontinuous at \(0\) for every \(p>1\).
\end{enumerate}
\end{theorem}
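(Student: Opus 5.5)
We use the dual description of the Figalli--Gigli distance: an admissible plan \(\gamma\) on \(\overline\Omega\times\overline\Omega\) has marginals agreeing with \(\mu,\nu\) on \(\Omega\), and
\[
W_{b,p}(\mu,\nu)^p=\inf_\gamma\int |x-y|^p\,d\gamma .
\]
Two facts drive the proof. The first is \(W_{b,p}(\mu,0)^p=\int d(x)^p\,d\mu\), where \(d=\dist(\cdot,\partial\Omega)\). The second is the Kantorovich--Rubinstein duality at \(p=1\):
\[
W_{b,1}(\mu,\nu)=\sup\Bigl\{\int\varphi\,d(\mu-\nu):\Lip\varphi\le1,\ \varphi|_{\partial\Omega}=0\Bigr\}.
\]

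\textbf{Part 1.} We argue by duality. For \(\varphi\) \(1\)-Lipschitz vanishing on \(\partial\Omega\), we have
\[
\int\varphi\,d(P_t\mu-P_t\nu)=\int P_t\varphi\,d(\mu-\nu),
\]
so it suffices to show \(\Lip(P_t\varphi)\le L_{t,\Omega}\) uniformly. Since \(|\varphi|\le d\le \operatorname{diam}\Omega\), this follows from the gradient estimate
\[
\sup_{x}\int|\nabla_x p_t^\Omega(x,y)|\,dy\le C_{t,\Omega}<\infty
\]
for the Dirichlet heat kernel on a bounded \(C^2\) domain (standard parabolic Schauder/boundary estimates). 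Then \(\|\nabla P_t\varphi\|_\infty\le C_{t,\Omega}\operatorname{diam}\Omega\). The function \(P_t\varphi\) also vanishes on \(\partial\Omega\), so it is an admissible test function after rescaling.

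\textbf{Part 2.} We interpolate. Take an optimal \(W_{b,1}\) plan \(\gamma\) between \(P_t\mu\) and \(P_t\nu\). All displacements are bounded by \(D=\operatorname{diam}\Omega\) (points in \(\Omega\) can always be sent to the boundary at cost \(\le D\)), so
\[
W_{b,p}(P_t\mu,P_t\nu)^p\le D^{p-1}W_{b,1}(P_t\mu,P_t\nu)\le D^{p-1}L_{t,\Omega}W_{b,1}(\mu,\nu).
\]
It remains to bound \(W_{b,1}\le C_mW_{b,p}\) on \(\mathcal M_{\le m}\). We compare with an optimal \(W_{b,p}\) plan \(\gamma\) and apply Hölder:
\[
\int|x-y|\,d\gamma\le \gamma(\overline\Omega^2)^{1-1/p}W_{b,p}.
\]
The mass of \(\gamma\) may be restricted to pairs with at least one endpoint in \(\Omega\), so it is at most \(2m\). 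This gives \(W_{b,1}\le (2m)^{1-1/p}W_{b,p}\). Combining,
\[
W_{b,p}(P_t\mu,P_t\nu)^p\le C\,W_{b,p}(\mu,\nu),
\]
which is the exponent-\(1\) bound on the right claimed in part 2.

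\textbf{Parts 3 and 4 (boundary amplification).} Take \(\nu=0\) and \(\mu=\delta_{x_\varepsilon}\) with \(d(x_\varepsilon)=\varepsilon\). Then \(W_{b,p}(\mu,0)=\varepsilon\), while
\[
W_{b,p}(P_t\mu,0)^p=\int d(y)^p\,p_t^\Omega(x_\varepsilon,y)\,dy .
\]
The key lower bound is \(p_t^\Omega(x,y)\ge c_t\,d(x)d(y)\) for fixed \(t>0\), which is the intrinsic ultracontractivity / Hopf-lemma comparison \(p_t^\Omega\asymp \phi_1(x)\phi_1(y)\) on \(C^2\) domains with \(\phi_1\asymp d\). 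It gives
\[
W_{b,p}(P_t\mu,0)^p\ge c\varepsilon\int d^{p+1}=c'\varepsilon .
\]
Hence
\[
\frac{W_{b,p}(P_t\mu,0)}{W_{b,p}(\mu,0)^\alpha}\gtrsim \varepsilon^{1/p-\alpha}\to\infty \quad\text{for }\alpha>1/p,
\]
with masses \(1\le m\); if \(m<1\), use the mass \(m\) instead. For part 4, take \(\mu_\varepsilon=\varepsilon^{-1}\delta_{x_\varepsilon}\). Then
\[
W_{b,p}(\mu_\varepsilon,0)^p=\varepsilon^{p-1}\to0,
\]
while \(W_{b,p}(P_t\mu_\varepsilon,0)^p\ge c'>0\), so \(P_t\) is discontinuous at \(0\).

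The main obstacle is the heat-kernel input: the uniform gradient bound in Part 1 and the two-sided boundary comparison \(p_t^\Omega(x,y)\ge c_t d(x)d(y)\). I would cite these as known results for \(C^2\) domains rather than reprove them.
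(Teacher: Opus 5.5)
Your proposal is correct. Parts 1 and 2 follow the paper's proof; Parts 3 and 4 use a different lower-bound tool for the same boundary-amplification mechanism.

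\textbf{Parts 1 and 2.} Like the paper, you combine the $W_{b,1}$ Kantorovich--Rubinstein formula with a fixed-time bound $\|\nabla P_t f\|_{L^\infty}\le C_{t,\Omega}\|f\|_{L^\infty}$. Your kernel condition $\sup_x\int|\nabla_x p_t^\Omega(x,y)|\,dy<\infty$ is the same statement. You then use the two comparisons $W_{b,p}^p\le D_\Omega^{p-1}W_{b,1}$ and $W_{b,1}\le(2m)^{1-1/p}W_{b,p}$, exactly as the paper does.

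One step you only gesture at. On a non-convex $\Omega$, a gradient bound does not by itself give a Euclidean Lipschitz bound on $\overline\Omega$. The zero boundary value is what makes it work: $|u|\le M\delta$, and if the segment $[x,y]$ meets $\partial\Omega$ then $\delta(x)+\delta(y)\le|x-y|$. The paper isolates this as Lemma~\ref{lem:grad-implies-lip-zero-trace}.

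\textbf{Parts 3 and 4.} You test with point masses $\delta_{x_\varepsilon}$ and invoke the pointwise kernel lower bound $p_t^\Omega(x,y)\ge c_t\,\delta(x)\delta(y)$. This comes from intrinsic ultracontractivity together with $\phi_1\asymp\delta$, or from sharp two-sided Gaussian bounds on $C^{1,1}$ domains. The paper works differently:
\begin{itemize}
\item it uses smooth collar bumps $u_0^\varepsilon$ and a single test function $\eta\in C_c^\infty$ with $0\le\eta\le\delta^p$;
\item by duality, $\int\delta^p\,P_tu_0^\varepsilon\,dx\ge\int(P_t\eta)\,u_0^\varepsilon\,dx$;
\item so it only needs the linear lower bound $P_t\eta\ge\alpha\delta$ near $\partial\Omega$ for one fixed solution;
\item it proves that bound from scratch via $W^{2,1}_r$ regularity, the strong maximum principle and a parabolic Hopf barrier (Lemma~\ref{lem:hopf-linear-general}).
\end{itemize}

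What each route buys. Yours is shorter and makes the $\varepsilon$-scaling transparent, but it rests on a heavier black box. The paper's is self-contained and carries over verbatim to nonsymmetric divergence-form operators with drift, applying Hopf to the adjoint flow with no symmetric-kernel or intrinsic-ultracontractivity theory. Its smooth witnesses are also what Sections 5--6 need for the affine class $\mathcal X_c$. For Theorem~\ref{thm:sharp-modulus} itself, Dirac witnesses are fine.

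\textbf{One correction.} The paper allows disconnected $\Omega$, and $p_t^\Omega(x,y)=0$ when $x,y$ lie in different components. So your kernel bound holds only within a component, and must be applied on the component $\Omega'$ containing $x_\varepsilon$. This gives $W_{b,p}(P_t\delta_{x_\varepsilon},0)^p\ge c_t\,\varepsilon\int_{\Omega'}\delta^{p+1}\,dy$, which is still $\gtrsim\varepsilon$, so the rest of your argument goes through unchanged.
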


The exponent \(1/p\) already appears in the one-dimensional half-line model. For the killed heat kernel on \((0,\infty)\), a unit packet initially placed at height \(\varepsilon\) has initial boundary \(p\)-moment of order \(\varepsilon^p\). At a fixed positive time, the Dirichlet kernel is linear in the starting height near the boundary. In other terms, the adjoint solution has a Hopf-type linear lower bound. It follows that the positive-time boundary \(p\)-moment is of order \(\varepsilon\), and the \(W_{b,p}\)-distance to zero is of order \(\varepsilon^{1/p}\). This explains the idea behind the sharp H\"older exponent.

The discontinuity at zero relies on sequences whose total masses diverge while their boundary \(p\)-moments tend to zero. On fixed total-mass sublevels, the map is continuous, and, in fact, it is \(1/p\)-H\"older by Theorem~\ref{thm:sharp-modulus}. This implies that the pathology is not simply due to lack of smoothing. Indeed, it shows the incompatibility between the boundary reservoir topology and the unbounded amounts of mass that concentrate in the vanishing boundary layers.

\subsection{Related works}
Figalli and Gigli \cite{FG10} introduced \(W_{b,2}\) as a boundary-reservoir transport
geometry for nonnegative measures. They applied a minimising-movement scheme to obtain the heat equation with constant Dirichlet value \(1\) for the entropy
\(\int_\Omega(\rho\log\rho-\rho)\,dx\). Ambrosio--Gigli later posed the related \(W_{b,2}\) heat-flow question for the entropy \(\int_\Omega\rho\log\rho\,dx\), where the reservoir equilibrium value is \(e^{-1}\) \cite[Theorem~6.6 and Open Problem~6.7]{AG13}.

Other variational Dirichlet optimal-transport structures include the transport-cost framework of Morales \cite{Mor18}, the dynamic Benamou--Brenier and curves-of-maximal-slope formulation of Erbar--Meglioli \cite{EM}, the \(W_{b,2}\)-based porous
medium construction of Kim--Koo--Seo \cite{KKS}, and the modified JKO/Fokker--Planck framework of Quattrocchi \cite{Qua26}. The closest recent variational results to ours are \cite{EM, Qua26}. Erbar--Meglioli \cite{EM} discuss the dynamic formulation of the Figalli--Gigli-type boundary-reservoir distance and a curves-of-maximal-slope statement for nonlinear diffusion with constant nonnegative Dirichlet data. Additionally, Quattrocchi \cite{Qua26} establishes the convergence of a modified JKO scheme for general strictly positive Dirichlet data, which are independent of time. Moreover, in one dimension, \cite{Qua26} gives a curve-of-maximal-slope interpretation for a modified metric. Our results are orthogonal to those constructions. To reiterate, we prove the sharp fixed-time power-scale semigroup modulus, that is, the optimal H\"{o}lder exponent in the power-modulus scale, and thus establish a direct positive-time Lipschitz/EVI obstruction in the static finite-measure metric of the \(W''_p\) type.

Other representations, such as the static shortcut, sticky-reflecting diffusion \cite{CMS25}, relative transport, unbalanced optimal transport (\textit{cf}. \cite{BGP24, Che, BE24}), the relaxation framework of Savar\'e--Sodini \cite{SS24}, help understand the boundary reservoir geometry but do not address the fixed-time semigroup modulus. In our proofs, we also rely on the standard relative-geometry results, especially, the shortcut/Kantorovich--Rubinstein representation, which we apply in the \(p=1\) part below. The new point here is that we combine it with the fixed-time Dirichlet regularisation and the matching \(p>1\) boundary-layer lower bound in the finite-measure boundary-reservoir metric. Another recent non-existence result is given in \cite{BMRR}, but the problem they consider is Wentzell, not Dirichlet, and their obstruction focuses on Wasserstein/JKO representations.

To the best of our knowledge, the sharp fixed-time power-scale
\(W_{b,p}\)-modulus of the killed Dirichlet semigroup in the finite-measure Figalli--Gigli \(W''_p\)-type metric has not been previously studied. Our contribution focuses on the identification of the exact fixed-time modulus in the static finite-measure metric, that is, Lipschitz continuity at \(p=1\), sharp \(1/p\)-H\"older continuity on mass sublevels for every \(p>1\), and discontinuity at the zero measure on the full space with finite measure.

\subsection{Organisation}

We divide the proof into metric and PDE parts. In the metric part, we apply the elementary comparison between \(W_{b,1}\) and \(W_{b,p}\) on
mass sublevels. The PDE part focuses on the Hopf-type amplification estimate for data in boundary layers. For the boundary-layer lower bound, we use positivity, duality, and a linear Hopf-type lower bound for the adjoint flow. This means that we may then extend the sharpness method from the lower-bound side of \(p>1\) to smooth uniformly elliptic operators in divergence form with first-order drift and without a zero-order term. In particular, we extend the statements on full-space discontinuity, the infinite-Lipschitz, and obstruction w.r.t. the affine constant-boundary to the density-induced maps.

In Section~\ref{sec:prelim}, we briefly recall the boundary-reservoir metric and the elementary metric estimates needed for the proofs. In Section~\ref{sec:Wbp-ell}, we establish the boundary-layer amplification estimate for the Dirichlet Laplacian.
Section~\ref{sec:Wb1} proves the \(W_{b,1}\) endpoint estimate and completes the
proof of Theorem~\ref{thm:sharp-modulus}. In Section~\ref{sec:ell-robustness},
we extend the uniformly elliptic lower bound. Section~\ref{sec:fg-evi}
derives the results for the quadratic Figalli--Gigli metric, which includes the
failure of the positive-time Lipschitz/EVI behaviour for the affine Dirichlet heat flow with a constant boundary.

For \(p=2\), we show the consequences for the Figalli--Gigli metric. On
an arbitrary \(W_{b,2}\)-metric domain that contains \(\mathcal X_c\), the affine
Dirichlet heat flow cannot be the restriction of a standard finite-\(\lambda\) \(\mathrm{EVI}_\lambda\) semigroup. The precise statement is given in Section~\ref{sec:fg-evi}. We recall only the elementary facts needed for the proof construction, namely, the distance to the zero measure, homogeneity, and the two comparison inequalities \(W_{b,p}^p\lesssim W_{b,1}\) and \(W_{b,1}\lesssim_m W_{b,p}\). The other metric properties of \(W_{b,p}\) are standard in the metric-pair framework \cite{FG10,Che}, and we recall them in Appendix~\ref{app:Wbp-metric-proof} to fix normalisation.

\section{Boundary transport distances and boundary moments}\label{sec:prelim}

\subsection{Notation and assumptions}
\(\Omega\subset\mathbb R^n\) is a nonempty bounded open set with \(C^2\) boundary, where we do not assume connectedness. We use the term \(C^2\) boundary in the standard one-sided sense, that is, for every \(\xi\in\partial\Omega\), there exists a neighbourhood \(U\) of \(\xi\), a rigid motion, and a \(C^2\) function \(h:\mathbb R^{n-1}\to\mathbb R\) such that, in these coordinates,
\[
U\cap\Omega=\{(x',x_n)\in U:\ x_n>h(x')\},
\qquad
U\cap\partial\Omega=\{(x',x_n)\in U:\ x_n=h(x')\}.
\]
We set
\[
\delta(x):=\dist(x,\partial\Omega)\qquad (x\in\Omega),
\]
extended by $\delta=0$ on $\partial\Omega$. We also set
\[
\mathcal M(\Omega):=\{\mu\ge0:\ \mu \text{ is a finite Radon measure on }\Omega\}.
\]
For \(m>0\),
\[
\mathcal M_{\le m}(\Omega)
:=\{\mu\in\mathcal M(\Omega):\mu(\Omega)\le m\}.
\]
For \(p\in[1,\infty)\) and \(\mu\in\mathcal M(\Omega)\), we define the \(p\)-th boundary moment by
\[
M_p(\mu):=\int_\Omega \delta(x)^p\,d\mu(x).
\]
We work only with finite Radon measures on \(\Omega\). Since \(\Omega\) is bounded, \(\delta\) is also bounded, and
\[
\int_\Omega \delta(x)^p\,d\mu(x)<\infty
\qquad\forall \mu\in\mathcal M(\Omega),\ \forall p\in[1,\infty).
\]
All boundary moments below are finite. We write \(0\) for the zero measure on \(\Omega\) and
\[
D_\Omega:=\sup\{|x-y|:\ x,y\in\overline\Omega\}.
\]
We use the following elementary fact that stems from the \(C^2\) boundary assumption.

\begin{lemma}\label{lem:components}
If \(\Omega\subset\mathbb R^n\) is a nonempty bounded open set with \(C^2\) boundary, then \(\Omega\) has finitely many connected components \(\Omega_1,\dots,\Omega_N\). Each \(\Omega_j\) is a bounded \(C^2\) domain, the closures \(\overline{\Omega_1},\dots,\overline{\Omega_N}\) are pairwise disjoint, and for every \(x\in\Omega_j\),
\[
\dist(x,\partial\Omega_j)=\dist(x,\partial\Omega)=\delta(x).
\]
\end{lemma}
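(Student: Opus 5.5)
The plan is to run a purely point-set topological argument built on the local graph description of the \(C^2\) boundary and compactness of \(\partial\Omega\).

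\textbf{Step 1: finitely many components.} Every \(\xi\in\partial\Omega\) has a chart neighbourhood \(U_\xi\), which we may shrink to a cylinder \(B'\times(-a,a)\) (after the rigid motion) on which \(h\) maps \(B'\) into \((-a/2,a/2)\). Then \(U_\xi\cap\Omega=\{x_n>h(x')\}\cap U_\xi\) is connected: each of its points can be joined by a vertical segment to the point at height \(3a/4\), and these top points form the connected set \(B'\times\{3a/4\}\). By compactness of \(\partial\Omega\), finitely many such sets \(V_1,\dots,V_K\) cover \(\partial\Omega\). Each connected component \(\Omega'\) of \(\Omega\) satisfies \(\partial\Omega'\subset\partial\Omega\), since components of an open set in \(\mathbb R^n\) are open and a boundary point of \(\Omega'\) lying in \(\Omega\) would belong to a different component, which is impossible. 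Moreover \(\partial\Omega'\neq\emptyset\), because \(\Omega'\) is bounded and nonempty. So \(\Omega'\) meets some \(U_{\xi_i}\cap\Omega\). That set is connected, so it is contained in \(\Omega'\). Hence the map sending each component to some index \(i\) is injective, and there are at most \(K\) components.

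\textbf{Step 2: boundaries and disjoint closures.} We claim that \(\partial\Omega_j\cap\partial\Omega_k=\emptyset\) for \(j\ne k\). If \(\xi\) lay in both, then the chart set \(U_\xi\cap\Omega\) would meet both components, since \(\xi\) is a limit of points of each and these points eventually lie in \(U_\xi\cap\Omega\). But this set is connected, so it lies in a single component, a contradiction. Since \(\overline{\Omega_j}=\Omega_j\cup\partial\Omega_j\) and the \(\Omega_j\) are disjoint, the closures are pairwise disjoint. By Step 1, \(\partial\Omega=\bigcup_j\partial\Omega_j\). The inclusion \(\supseteq\) is from Step 1; for \(\subseteq\), each \(\xi\in\partial\Omega\) is approached by points of the finitely many \(\Omega_j\), so some single \(\Omega_j\) has \(\xi\) in its closure. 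Each \(\Omega_j\) has \(C^2\) boundary: near \(\xi\in\partial\Omega_j\), shrink \(U_\xi\) to the connected cylinder above, whose intersection with \(\Omega\) lies in \(\Omega_j\). Then \(U\cap\Omega_j=U\cap\Omega\), and the same \(h\) works.

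\textbf{Step 3: distance identity.} For \(x\in\Omega_j\), we have \(\partial\Omega_j\subset\partial\Omega\), so \(\dist(x,\partial\Omega)\le\dist(x,\partial\Omega_j)\). For the reverse inequality, let \(y\in\partial\Omega\) attain \(\delta(x)\). The open segment \([x,y)\) lies in the open ball \(B(x,\delta(x))\subset\Omega\), and this ball is connected. It therefore lies in \(\Omega_j\), so \(y\in\overline{\Omega_j}\setminus\Omega_j=\partial\Omega_j\). This gives \(\dist(x,\partial\Omega_j)\le\delta(x)\).

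The only mildly delicate point is Step 1, specifically verifying that a chart neighbourhood can be chosen so that \(U\cap\Omega\) is connected. The cylinder normalisation handles this, and the rest follows by compactness.
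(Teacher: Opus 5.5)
Your proof is correct and follows essentially the same route as the paper. Connected chart sets \(U_\xi\cap\Omega\) together with compactness of \(\partial\Omega\) bound the number of components; \(\partial\Omega_j\subset\partial\Omega\) and connectedness of the chart sets give disjoint closures and the \(C^2\) property with the same graph; and the distance identity comes from \(\partial\Omega_j\subset\partial\Omega\) plus a reverse inequality. The differences are minor: your cylinder normalisation actually proves that \(U_\xi\cap\Omega\) is connected, which the paper only asserts via a half-ball, and for the reverse inequality you use a nearest point and \(B(x,\delta(x))\subset\Omega_j\) instead of the paper's segment-crossing argument for arbitrary \(y\in\partial\Omega\).
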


\begin{proof}
See Appendix~\ref{app:components}.
\end{proof}

\subsection{Boundary transport distances \texorpdfstring{$W_{b,p}$}{Wb,p}}
We write \(\pi_1,\pi_2:\overline\Omega\times\overline\Omega\to\overline\Omega\)
for the coordinate projections. If \(\Gamma\) is a finite Borel measure on a product
space, we write \(\Gamma_1,\Gamma_2\) for its first and second marginals when there is no ambiguity.

\begin{definition}[Admissible plans and $W_{b,p}$]\label{def:Wbp}
We fix \(p\in[1,\infty)\) and let \(\mu,\nu\in\mathcal M(\Omega)\). We call a finite nonnegative Borel measure \(\gamma\) on \(\overline\Omega\times\overline\Omega\) \textit{admissible} for \((\mu,\nu)\) if
for every Borel set \(E\subset\Omega\),
\[
(\pi_1)_\#\gamma(E)=\mu(E),
\qquad
(\pi_2)_\#\gamma(E)=\nu(E).
\]
We write \(\Adm(\mu,\nu)\) for the set of all such admissible plans, and define
\[
W_{b,p}(\mu,\nu)^p:=\inf_{\gamma\in\Adm(\mu,\nu)}\int_{\overline\Omega\times\overline\Omega}|x-y|^p\,d\gamma(x,y).
\]
\end{definition}

\begin{lemma}\label{lem:adm-nonempty}
We fix $p\in[1,\infty)$ and $\mu,\nu\in\mathcal M(\Omega)$. It follows that $\Adm(\mu,\nu)\neq\emptyset$ and $W_{b,p}(\mu,\nu)<\infty$. More precisely, for an arbitrary fixed $b_0\in\partial\Omega$, we have
\[
W_{b,p}(\mu,\nu)^p\le D_\Omega^p\bigl(\mu(\Omega)+\nu(\Omega)\bigr).
\]
\end{lemma}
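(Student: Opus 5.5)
The plan is to exhibit one explicit admissible plan that sends all of \(\mu\) to a single boundary point \(b_0\in\partial\Omega\) and brings all of \(\nu\) from that same point. Then the cost is bounded directly by \(D_\Omega^p\) times the total mass.

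\textbf{The plan.} Define
\[
\gamma:=\mu\otimes\delta_{b_0}+\delta_{b_0}\otimes\nu,
\]
viewed as a measure on \(\overline\Omega\times\overline\Omega\). More precisely, \(\gamma\) is the push-forward of \(\mu\) under \(x\mapsto(x,b_0)\) plus the push-forward of \(\nu\) under \(y\mapsto(b_0,y)\), with \(\mu,\nu\) extended by zero to Borel measures on \(\overline\Omega\). Both push-forwards are by continuous maps, so \(\gamma\) is a finite nonnegative Borel measure with total mass \(\mu(\Omega)+\nu(\Omega)<\infty\).

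\textbf{Admissibility.} Fix a Borel set \(E\subset\Omega\). The first marginal of \(\gamma\) is \(\mu+\nu(\Omega)\delta_{b_0}\). Since \(b_0\in\partial\Omega\) and \(\Omega\) is open, \(b_0\notin E\), so
\[
(\pi_1)_\#\gamma(E)=\mu(E).
\]
The same argument gives \((\pi_2)_\#\gamma(E)=\nu(E)\). Hence \(\gamma\in\Adm(\mu,\nu)\), and in particular \(\Adm(\mu,\nu)\neq\emptyset\).

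\textbf{Cost bound.} On \(\overline\Omega\times\overline\Omega\) we have \(|x-y|\le D_\Omega\), and \(D_\Omega<\infty\) because \(\Omega\) is bounded. Therefore
\[
\int_{\overline\Omega\times\overline\Omega}|x-y|^p\,d\gamma
=\int_\Omega |x-b_0|^p\,d\mu+\int_\Omega |b_0-y|^p\,d\nu
\le D_\Omega^p\bigl(\mu(\Omega)+\nu(\Omega)\bigr).
\]
Taking the infimum over admissible plans gives the claimed bound on \(W_{b,p}(\mu,\nu)^p\), which is finite.

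\textbf{Where care is needed.} The only point requiring attention is measure-theoretic. One must interpret \(\mu\) as a Borel measure on \(\overline\Omega\) that gives no mass to \(\partial\Omega\). One must also use that \(\partial\Omega\ne\emptyset\), which holds because \(\Omega\) is nonempty, bounded and open, so \(\Omega\ne\mathbb R^n\). This guarantees that a point \(b_0\) exists. Everything else is immediate. Sharper choices, such as taking \(b_0\) to be a nearest boundary point for each \(x\), would give the moment bound \(M_p(\mu)+M_p(\nu)\), but that is not needed here.
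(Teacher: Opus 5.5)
Your proposal is correct and is essentially the paper's proof: you use the same plan $\gamma=(\mathrm{Id},b_0)_\#\mu+(b_0,\mathrm{Id})_\#\nu$, the same observation that $b_0\notin E$ for Borel $E\subset\Omega$ (which removes the extra atom from each marginal), and the same cost bound $|x-y|\le D_\Omega$. Your remark that $\partial\Omega\neq\emptyset$, since $\Omega$ is nonempty, bounded and open, makes explicit a point the paper leaves implicit.
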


\begin{proof}
We fix $b_0\in\partial\Omega$ and define
\[
\gamma:=(\mathrm{Id},b_0)_\#\mu + (b_0,\mathrm{Id})_\#\nu.
\]
For every Borel set $E\subset\Omega$,
\[
(\pi_1)_\#\gamma(E)=\mu(E)+\nu(\Omega)\,\mathbf 1_E(b_0)=\mu(E),
\]
because $b_0\in\partial\Omega$ and $E\subset\Omega$. Analogously, we have
\[
(\pi_2)_\#\gamma(E)=\nu(E)+\mu(\Omega)\,\mathbf 1_E(b_0)=\nu(E).
\]
This implies that $\gamma\in\Adm(\mu,\nu)$, so $\Adm(\mu,\nu)\neq\emptyset$. Moreover,
\[
\begin{multlined}
\int_{\overline\Omega\times\overline\Omega}|x-y|^p\,d\gamma
=\int_\Omega |x-b_0|^p\,d\mu(x)\\
+\int_\Omega |b_0-y|^p\,d\nu(y)
\le D_\Omega^p\bigl(\mu(\Omega)+\nu(\Omega)\bigr)<\infty.
\end{multlined}
\]
It now suffices to take the infimum over admissible plans to prove finiteness and the bound.
\end{proof}

\begin{remark}[Boundary--boundary mass is irrelevant]\label{rem:discard-bdrybdry}
We apply the following observation multiple times in the proofs. If \(\gamma\in\Adm(\mu,\nu)\), then the deletion of its restriction to \(\partial\Omega\times\partial\Omega\) does not change the interior marginals and can only decrease the transport cost.
\end{remark}

\begin{lemma}\label{lem:approx-boundary-selection}
For every \(m\in\mathbb N\), there exists a Borel map \(\beta_m:\Omega\to\partial\Omega\) such that
\[
|x-\beta_m(x)|\le \Bigl(1+\frac1m\Bigr)\delta(x)
\qquad\forall x\in\Omega.
\]
\end{lemma}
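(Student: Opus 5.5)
The plan is to build \(\beta_m\) by a measurable-selection argument that avoids any general selection theorem, using only that \(\partial\Omega\) is compact and separable. Fix \(m\in\mathbb N\). The first step is to choose a countable dense set \(\{b_k\}_{k\ge1}\subset\partial\Omega\). Such a set exists because \(\partial\Omega\) is a nonempty compact subset of \(\mathbb R^n\); it is nonempty since \(\Omega\) is a nonempty bounded open set, so \(\Omega\neq\mathbb R^n\). Next, for each \(k\), define
\[
A_k:=\Bigl\{x\in\Omega:\ |x-b_k|<\Bigl(1+\tfrac1m\Bigr)\delta(x)\Bigr\}.
\]
Both \(x\mapsto|x-b_k|\) and \(\delta\) are continuous (\(\delta\) is \(1\)-Lipschitz), so each \(A_k\) is open in \(\Omega\) and in particular Borel.

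The key step is to check that \(\bigcup_k A_k=\Omega\). Take \(x\in\Omega\). Then \(\delta(x)>0\), because \(\Omega\) is open and \(\partial\Omega\) is closed and disjoint from \(\Omega\). By compactness there is a point \(\xi\in\partial\Omega\) with \(|x-\xi|=\delta(x)\). By density we can pick \(b_k\) with
\[
|b_k-\xi|<\tfrac1m\,\delta(x),
\]
and the triangle inequality then gives
\[
|x-b_k|<\Bigl(1+\tfrac1m\Bigr)\delta(x).
\]
So \(x\in A_k\). Strict positivity of \(\delta\) on \(\Omega\) is exactly what makes the slack factor \(1+\frac1m\) usable here, and this is the only point that needs care.

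Finally, define \(\beta_m(x):=b_{k(x)}\), where \(k(x):=\min\{k:\ x\in A_k\}\). The map \(\beta_m\) takes the constant value \(b_k\) on the Borel set
\[
A_k\setminus\bigcup_{j<k}A_j,
\]
and these sets form a countable Borel partition of \(\Omega\). Hence \(\beta_m\) is Borel measurable, and the inequality \(|x-\beta_m(x)|\le(1+\frac1m)\delta(x)\) holds by construction (even strictly). I expect no real obstacle. The only subtlety is that an exact nearest-point projection need not be unique or Borel without extra work, and the factor \(1+\frac1m\) is precisely what lets us replace it with this elementary countable construction.
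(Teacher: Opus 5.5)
Your proposal is correct and follows essentially the same route as the paper. Both arguments take a countable dense set \(\{b_k\}\subset\partial\Omega\), select the first index \(k\) with \(b_k\) within \((1+\frac1m)\delta(x)\) of \(x\), and get Borel measurability from the resulting countable partition of \(\Omega\). The only difference is cosmetic: you use strict inequalities, giving open sets \(A_k\), while the paper uses non-strict ones, giving relatively closed sets.
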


\begin{proof}
Let us fix a countable dense subset \(\{b_k\}_{k\ge1}\subset\partial\Omega\). For \(x\in\Omega\), we set
\[
I_m(x):=\Bigl\{k\in\mathbb N:\ |x-b_k|\le \Bigl(1+\frac1m\Bigr)\delta(x)\Bigr\}.
\]
Notice that each \(I_m(x)\) is nonempty. We choose \(b\in\partial\Omega\) with \(|x-b|=\delta(x)\), and then choose \(k\) such that \(|b-b_k|\le \delta(x)/m\). Let us also define
\[
k_m(x):=\min I_m(x),
\qquad
\beta_m(x):=b_{k_m(x)}.
\]
For each \(k\in\mathbb N\), it holds that
\[
\{x\in\Omega:\ k_m(x)=k\}
=
\{x:\ k\in I_m(x)\}\cap\bigcap_{j<k}\{x:\ j\notin I_m(x)\},
\]
so \(k_m\) and \(\beta_m\) are Borel. By construction, we obtain
\[
|x-\beta_m(x)|\le \Bigl(1+\frac1m\Bigr)\delta(x)
\qquad\forall x\in\Omega. \qedhere
\]
\end{proof}

\begin{lemma}[Distance to the zero measure]\label{lem:tozero}
We fix $p\in[1,\infty)$. For every $\mu\in\mathcal M(\Omega)$, we have
\[
W_{b,p}(\mu,0)^p=\int_\Omega \delta(x)^p\,d\mu(x).
\]
\end{lemma}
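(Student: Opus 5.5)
We prove the identity \(W_{b,p}(\mu,0)^p=M_p(\mu)\) by two inequalities: an explicit admissible plan for the upper bound, and a pointwise lower bound valid for every admissible plan.

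\emph{Upper bound.} For each \(m\in\mathbb N\), let \(\beta_m\) be the Borel selection of Lemma~\ref{lem:approx-boundary-selection} and set
\[
\gamma_m:=(\mathrm{Id},\beta_m)_\#\mu .
\]
We check admissibility for \((\mu,0)\). Since \(\beta_m\) takes values in \(\partial\Omega\), for every Borel \(E\subset\Omega\) we get \((\pi_2)_\#\gamma_m(E)=\mu(\beta_m^{-1}(E))=0\). Also \((\pi_1)_\#\gamma_m(E)=\mu(E)\). Hence \(\gamma_m\in\Adm(\mu,0)\) and
\[
W_{b,p}(\mu,0)^p\le\int_\Omega|x-\beta_m(x)|^p\,d\mu(x)\le\Bigl(1+\frac1m\Bigr)^p\int_\Omega\delta^p\,d\mu .
\]
The right-hand side is finite because \(\delta\) is bounded. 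Letting \(m\to\infty\) gives \(W_{b,p}(\mu,0)^p\le M_p(\mu)\).

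\emph{Lower bound.} Let \(\gamma\in\Adm(\mu,0)\). Since \((\pi_2)_\#\gamma\) vanishes on Borel subsets of \(\Omega\), we have \(\gamma(\overline\Omega\times\Omega)=0\), so \(\gamma\) is concentrated on \(\overline\Omega\times\partial\Omega\). On this set, \(|x-y|\ge\delta(x)\) for all \(x\in\overline\Omega\): for \(x\in\Omega\) this is the definition of \(\delta\), and for \(x\in\partial\Omega\) it holds trivially since \(\delta(x)=0\). Therefore
\[
\int|x-y|^p\,d\gamma\ \ge\ \int_{\overline\Omega\times\partial\Omega}\delta(x)^p\,d\gamma(x,y)\ \ge\ \int_{\Omega\times\overline\Omega}\delta(x)^p\,d\gamma
=\int_\Omega\delta^p\,d\bigl((\pi_1)_\#\gamma\bigr)=\int_\Omega\delta^p\,d\mu .
\]
The second inequality only drops the nonnegative contribution of \(\partial\Omega\times\partial\Omega\), as in Remark~\ref{rem:discard-bdrybdry}. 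The last equality uses that the first marginal agrees with \(\mu\) on Borel subsets of \(\Omega\); it follows from the marginal identity by approximating \(\delta^p\mathbf 1_\Omega\) with simple functions. Taking the infimum over \(\gamma\in\Adm(\mu,0)\) gives \(W_{b,p}(\mu,0)^p\ge M_p(\mu)\).

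The only delicate step is the upper bound: a nearest-point projection onto \(\partial\Omega\) need not be Borel, and Lemma~\ref{lem:approx-boundary-selection} resolves this at the cost of the factor \((1+1/m)^p\), which disappears in the limit. The lower bound is pure bookkeeping about where \(\gamma\) is concentrated.
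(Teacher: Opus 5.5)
Your proof is correct and follows the paper's argument essentially verbatim. For the lower bound you use $\gamma(\overline\Omega\times\Omega)=0$ together with $|x-y|\ge\delta(x)$ on $\overline\Omega\times\partial\Omega$, and for the upper bound you use the plans $(\mathrm{Id},\beta_m)_\#\mu$ built from the Borel selection of Lemma~\ref{lem:approx-boundary-selection}, then let $m\to\infty$. One small remark: the $\partial\Omega\times\partial\Omega$ contribution you drop in the lower bound is in fact zero, since $\delta$ vanishes on $\partial\Omega$.
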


\begin{proof}
Let \(\gamma\in\Adm(\mu,0)\). Since \((\pi_2)_\#\gamma(E)=0\) for every
Borel set \(E\subset\Omega\), we have
\[
\gamma(\overline\Omega\times\Omega)=0.
\]
This means that \(y\in\partial\Omega\) for \(\gamma\)-a.e.\ \((x,y)\), and
\(|x-y|\ge \delta(x)\) for \(\gamma\)-a.e.\ \((x,y)\). Because \(\delta=0\) on
\(\partial\Omega\) and \((\pi_1)_\#\gamma\restriction_{\Omega}=\mu\), it follows that
\[
\int_{\overline\Omega\times\overline\Omega}|x-y|^p\,d\gamma
\ge
\int_{\overline\Omega}\delta(x)^p\,d(\pi_1)_\#\gamma(x)
=
\int_\Omega \delta(x)^p\,d\mu(x).
\]
We now take the infimum over \(\gamma\) to obtain
\[
W_{b,p}(\mu,0)^p\ge \int_\Omega \delta(x)^p\,d\mu(x).
\]

For \(m\in\mathbb N\), let \(\beta_m:\Omega\to\partial\Omega\) be the Borel map from Lemma~\ref{lem:approx-boundary-selection}, and set
\[
\gamma_m:=(\mathrm{Id},\beta_m)_\#\mu.
\]
It follows that \(\gamma_m\in\Adm(\mu,0)\), and
\[
\begin{aligned}
\int |x-y|^p\,d\gamma_m(x,y)
&=\int_\Omega |x-\beta_m(x)|^p\,d\mu(x)\\
&\le \Bigl(1+\frac1m\Bigr)^p\int_\Omega \delta(x)^p\,d\mu(x).
\end{aligned}
\]
It now suffices to take the infimum over admissible plans and let \(m\to\infty\) to arrive at the reverse inequality.
\end{proof}

\begin{remark}\label{rem:Wbp-homogeneity}
We fix \(p\in[1,\infty)\). For every \(a\ge0\) and every \(\mu,\nu\in\mathcal M(\Omega)\),
\[
W_{b,p}(a\mu,a\nu)=a^{1/p}W_{b,p}(\mu,\nu).
\]
Indeed, if \(a>0\), then \(\gamma\in\Adm(\mu,\nu)\) if and only if
\(a\gamma\in\Adm(a\mu,a\nu)\), and
\[
\int_{\overline\Omega\times\overline\Omega}|x-y|^p\,d(a\gamma)
=
a\int_{\overline\Omega\times\overline\Omega}|x-y|^p\,d\gamma.
\]
Taking infima yields
\[
W_{b,p}(a\mu,a\nu)^p=a\,W_{b,p}(\mu,\nu)^p.
\]
The case \(a=0\) is trivial.
\end{remark}

\begin{lemma}[Comparison of \(W_{b,1}\) and \(W_{b,p}\)]
\label{lem:Wb1-Wbp-comparison}
Let \(p>1\). The following properties hold.

\begin{enumerate}
\item For all \(\mu,\nu\in\mathcal M(\Omega)\),
\[
W_{b,p}(\mu,\nu)^p
\le D_\Omega^{p-1} W_{b,1}(\mu,\nu).
\]

\item If \(\mu,\nu\in\mathcal M_{\le m}(\Omega)\), then
\[
W_{b,1}(\mu,\nu)
\le (2m)^{1-1/p} W_{b,p}(\mu,\nu).
\]
\end{enumerate}
\end{lemma}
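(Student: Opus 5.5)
Both inequalities follow by comparing costs of a single admissible plan, using only that distances in \(\overline\Omega\) are at most \(D_\Omega\) and H\"older's inequality.

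For part 1, we fix \(\varepsilon>0\) and take \(\gamma\in\Adm(\mu,\nu)\) with \(\int|x-y|\,d\gamma\le W_{b,1}(\mu,\nu)+\varepsilon\); by Lemma~\ref{lem:adm-nonempty} the infimum is finite, so such a \(\gamma\) exists. Since \(|x-y|\le D_\Omega\) on \(\overline\Omega\times\overline\Omega\), we have pointwise \(|x-y|^p\le D_\Omega^{p-1}|x-y|\). Integrating against \(\gamma\) gives
\[
W_{b,p}(\mu,\nu)^p\le \int|x-y|^p\,d\gamma\le D_\Omega^{p-1}\bigl(W_{b,1}(\mu,\nu)+\varepsilon\bigr),
\]
and letting \(\varepsilon\to0\) yields the claim.

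For part 2, we again take a near-optimal \(\gamma\in\Adm(\mu,\nu)\), now for the \(W_{b,p}\) cost. The difficulty is that \(\gamma\) may have arbitrarily large total mass, because its restriction to \(\partial\Omega\times\partial\Omega\) is unconstrained by the marginal conditions. By Remark~\ref{rem:discard-bdrybdry} we replace \(\gamma\) by \(\tilde\gamma:=\gamma\restriction_{(\overline\Omega\times\overline\Omega)\setminus(\partial\Omega\times\partial\Omega)}\). This plan is still admissible and has no larger \(p\)-cost. Its support lies in \((\Omega\times\overline\Omega)\cup(\overline\Omega\times\Omega)\), so
\[
\tilde\gamma(\overline\Omega\times\overline\Omega)\le \tilde\gamma(\Omega\times\overline\Omega)+\tilde\gamma(\overline\Omega\times\Omega)=\mu(\Omega)+\nu(\Omega)\le 2m,
\]
where the equalities use the marginal conditions on the Borel set \(E=\Omega\).

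We then apply H\"older's inequality with exponents \(p\) and \(p'=p/(p-1)\) to \(\tilde\gamma\):
\[
W_{b,1}(\mu,\nu)\le\int|x-y|\,d\tilde\gamma\le \tilde\gamma(\overline\Omega\times\overline\Omega)^{1-1/p}\Bigl(\int|x-y|^p\,d\tilde\gamma\Bigr)^{1/p}\le (2m)^{1-1/p}\bigl(W_{b,p}(\mu,\nu)^p+\varepsilon\bigr)^{1/p}.
\]
Letting \(\varepsilon\to0\) gives the claim.

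The only point requiring care is this mass bound for the trimmed plan, which is exactly what Remark~\ref{rem:discard-bdrybdry} provides. Everything else is routine.
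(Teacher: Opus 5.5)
Your proof is correct and follows the paper's argument: in part 1 you use the pointwise bound \(|x-y|^p\le D_\Omega^{p-1}|x-y|\), and in part 2 you delete the \(\partial\Omega\times\partial\Omega\)-part (Remark~\ref{rem:discard-bdrybdry}) so the plan has total mass at most \(\mu(\Omega)+\nu(\Omega)\le 2m\), then apply H\"older. You also justify that mass bound through the marginal conditions on \(E=\Omega\), a step the paper only asserts.
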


\begin{proof}
For the first point, let \(\gamma\in\Adm(\mu,\nu)\). Since \(|x-y|\le D_\Omega\) on \(\overline\Omega\times\overline\Omega\),
\[
\int |x-y|^p\,d\gamma
\le D_\Omega^{p-1}\int |x-y|\,d\gamma.
\]
It suffices to take the infimum over \(\gamma\) to obtain the claim.

For the second point, we fix \(\gamma\in\Adm(\mu,\nu)\) and delete its \(\partial\Omega\times\partial\Omega\)-part. The new admissible plan, which we denote by \(\gamma\), has total mass at most \(\mu(\Omega)+\nu(\Omega)\le 2m\). By H\"older's inequality, we obtain
\[
\int |x-y|\,d\gamma
\le
\gamma(\overline\Omega\times\overline\Omega)^{1-1/p}
\left(\int |x-y|^p\,d\gamma\right)^{1/p}
\le
(2m)^{1-1/p}
\left(\int |x-y|^p\,d\gamma\right)^{1/p}.
\]
Taking the infimum over admissible \(\gamma\) yields the claim.
\end{proof}

For \(p\in[1,\infty)\), \(X\subset\mathcal M(\Omega)\), and \(F:X\to\mathcal M(\Omega)\), we write
\[
\Lip_{W_{b,p}}(F;X)
:=\sup_{\substack{\mu,\nu\in X\\ \mu\neq \nu}}
\frac{W_{b,p}(F\mu,F\nu)}{W_{b,p}(\mu,\nu)}
\in[0,\infty]
\]
for the global \(W_{b,p}\)-Lipschitz constant of \(F\) on \(X\). When \(X=\mathcal M(\Omega)\), we simply write \(\Lip_{W_{b,p}}(F)\).

\begin{lemma}[Basic metric properties of \texorpdfstring{$W_{b,p}$}{Wb,p}]\label{lem:Wbp-metric}
We fix $p\in[1,\infty)$. For all $\mu,\nu,\sigma\in\mathcal M(\Omega)$, the following properties hold.
\begin{itemize}
\item $W_{b,p}(\mu,\nu)=W_{b,p}(\nu,\mu)$
\item $W_{b,p}(\mu,\mu)=0$, and $W_{b,p}(\mu,\nu)=0$ implies $\mu=\nu$ as measures on $\Omega$
\item $W_{b,p}(\mu,\sigma)\le W_{b,p}(\mu,\nu)+W_{b,p}(\nu,\sigma)$
\end{itemize}
It follows that $W_{b,p}$ is a metric on $\mathcal M(\Omega)$.
\end{lemma}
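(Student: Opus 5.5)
Symmetry follows from swapping coordinates: the map $(x,y)\mapsto(y,x)$ sends $\Adm(\mu,\nu)$ bijectively onto $\Adm(\nu,\mu)$ and preserves the cost $|x-y|^p$. For $W_{b,p}(\mu,\mu)=0$, I will use the diagonal plan $(\mathrm{Id},\mathrm{Id})_\#\mu$, which is admissible and has zero cost.

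For definiteness, suppose $W_{b,p}(\mu,\nu)=0$ and fix $\varphi\in C_c(\Omega)$, say $\supp\varphi\subset K$ with $K\subset\Omega$ compact. I will first extend $\varphi$ by zero to $\overline\Omega$; it is then uniformly continuous. Let $\omega$ denote its modulus of continuity, and let $\|\varphi\|$ be its sup norm. Take admissible plans $\gamma_k$ with cost $\varepsilon_k\to0$, and remove their $\partial\Omega\times\partial\Omega$ part as in Remark~\ref{rem:discard-bdrybdry}. Since $\varphi=0$ on $\partial\Omega$, the marginal conditions give
\[
\int\varphi\,d\mu-\int\varphi\,d\nu=\int(\varphi(x)-\varphi(y))\,d\gamma_k .
\]
On the set $\{|x-y|\le\eta\}$ the integrand is at most $\omega(\eta)$. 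The main obstacle is that the total mass of $\gamma_k$ may not be controlled, so this piece cannot simply be bounded by $\omega(\eta)$ times the mass.

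To handle this, I will note that the integrand vanishes unless $x\in K$ or $y\in K$. The $\gamma_k$-mass of the set $\{x\in K\}\cup\{y\in K\}$ is at most $\mu(K)+\nu(K)$, so the near-diagonal contribution is bounded by $\omega(\eta)(\mu(\Omega)+\nu(\Omega))$. On the set $\{|x-y|>\eta\}$, Chebyshev's inequality gives $\gamma_k$-mass at most $\varepsilon_k\eta^{-p}$, so that contribution is at most $2\|\varphi\|\varepsilon_k\eta^{-p}$. I will let $k\to\infty$ and then $\eta\to0$ to conclude $\int\varphi\,d\mu=\int\varphi\,d\nu$. Since finite Radon measures on $\Omega$ are determined by their action on $C_c(\Omega)$, it follows that $\mu=\nu$.

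For the triangle inequality, I will use gluing, and boundary mass is the delicate point. Take near-optimal $\gamma^{12}\in\Adm(\mu,\nu)$ and $\gamma^{23}\in\Adm(\nu,\sigma)$, again with boundary--boundary parts removed. Their interior second and first marginals both equal $\nu$, but the boundary marginals differ. I therefore split each plan as $\gamma^{12}=\gamma^{12}|_{\overline\Omega\times\Omega}+\gamma^{12}|_{\Omega\times\partial\Omega}$, and similarly split $\gamma^{23}$. I will glue the parts $\gamma^{12}|_{\overline\Omega\times\Omega}$ and $\gamma^{23}|_{\Omega\times\overline\Omega}$ along their common marginal $\nu$ using disintegration, which yields a measure $\lambda$ on $\overline\Omega^3$. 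The remaining pieces are $\gamma^{12}|_{\Omega\times\partial\Omega}$, which I lift to $\overline\Omega^3$ via $(x,b)\mapsto(x,b,b)$, and $\gamma^{23}|_{\partial\Omega\times\Omega}$, which I lift via $(b,z)\mapsto(b,b,z)$.

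Let $\Lambda$ be the sum of $\lambda$ and these two lifts. I will check that its $(1,2)$ and $(2,3)$ projections reproduce $\gamma^{12}$ and $\gamma^{23}$ up to boundary--boundary mass. It follows that its $(1,3)$ projection lies in $\Adm(\mu,\sigma)$, since the interior marginals are correct. Minkowski's inequality in $L^p(\Lambda)$, applied to $|x-z|\le|x-y|+|y-z|$, then gives the triangle inequality after letting the near-optimality errors tend to zero.
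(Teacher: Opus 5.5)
Your proposal is correct. Symmetry, $W_{b,p}(\mu,\mu)=0$ and the triangle inequality follow the paper's idea: unmatched boundary mass is passed through the diagonal at zero cost. The paper does this by padding. It sets $\hat\gamma_{01}=\gamma_{01}+\Delta_\# m_{12}$ and $\hat\gamma_{12}=\gamma_{12}+\Delta_\# m_{01}$, where $m_{01},m_{12}$ are the boundary parts of the middle marginals and $\Delta(b)=(b,b)$. The full middle marginals on $\overline\Omega$ then coincide, and the standard gluing lemma applies verbatim. Your construction (gluing along $\nu$ over the interior, plus the lifts $(x,b)\mapsto(x,b,b)$ and $(b,z)\mapsto(b,b,z)$) is an explicit realisation of one such gluing. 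You should state explicitly that the extra $(1,2)$- and $(2,3)$-mass lies on the diagonal of $\partial\Omega\times\partial\Omega$, so it adds nothing to the costs in the Minkowski step.

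The genuine difference is in separation. The ``main obstacle'' you identify does not actually arise. Once the $\partial\Omega\times\partial\Omega$ part is deleted, which you do, $\gamma_k(\overline\Omega\times\overline\Omega)=\gamma_k(\Omega\times\overline\Omega)+\gamma_k(\partial\Omega\times\Omega)\le\mu(\Omega)+\nu(\Omega)$. The paper uses exactly this bound. It tests against $\varphi\in C_c^1(\Omega)$, estimates $\bigl|\int\varphi\,d(\mu-\nu)\bigr|\le\Lip(\bar\varphi)\int|x-y|\,d\tilde\gamma_k$, and applies H\"older with the mass bound. This yields the quantitative inequality $\bigl|\int\varphi\,d(\mu-\nu)\bigr|\le\Lip(\bar\varphi)\,(\mu(\Omega)+\nu(\Omega))^{1-1/p}\,W_{b,p}(\mu,\nu)$, and density of $C_c^1$ in $C_c$ concludes. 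Your argument uses the modulus of continuity, localisation to $\{x\in K\}\cup\{y\in K\}$, and Chebyshev off the near-diagonal. It is also valid, works directly with $C_c(\Omega)$, and does not even need the boundary--boundary deletion. The cost is that it is longer and gives no quantitative bound.
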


\begin{remark}
For \(p=2\), this is precisely the Figalli--Gigli distance \cite{FG10}. For general \(p\),
this is the specialisation to \((\overline\Omega,\partial\Omega)\) of Che's
metric-pair partial-transport distance (see \cite[Section~3, Theorems~3.7 and~3.10]{Che}). This framework establishes the existence of optimal partial-transport plans and the metric property. Since \(\Omega\) is bounded, the \(p\)-finiteness condition holds automatically for finite Radon measures on \(\Omega\). We will use Lemma~\ref{lem:Wbp-metric} multiple times, and defer its proof to Appendix~\ref{app:Wbp-metric-proof}.
\end{remark}

For a finite signed Radon measure \(\sigma\) on \(\Omega\), we write
\[
\|\sigma\|_{\mathrm{TV}}:=|\sigma|(\Omega).
\]
In particular, if \(\sigma=h\,dx\) with \(h\in L^1(\Omega)\), then
\[
\|\sigma\|_{\mathrm{TV}}=\|h\|_{L^1(\Omega)}.
\]

\section{Dirichlet Laplacian and boundary amplification}\label{sec:Wbp-ell}
The proof of the \(p>1\) instability has two steps. We first construct boundary-layer data with unit mass with support at distance \(\asymp\varepsilon\) from
\(\partial\Omega\). We then apply a Hopf-type lower bound for the adjoint Dirichlet flow so that, after any fixed positive time, a boundary-moment lower bound of order
\(\varepsilon\) follows. We establish the robustness of the same lower-bound idea for more general uniformly elliptic operators in Section~\ref{sec:ell-robustness}.

\subsection{Geometry of \texorpdfstring{$\delta$}{delta} near \texorpdfstring{$\partial\Omega$}{the boundary}}

\begin{lemma}[Tubular neighbourhood and signed distance]\label{lem:tubular}
We assume that \(\Omega\subset\mathbb R^n\) is a nonempty bounded open set with a \(C^2\) boundary. In this case, there exist \(r_0>0\), an open tubular neighbourhood
\[
U:=\{x\in\mathbb R^n:\dist(x,\partial\Omega)<r_0\},
\]
and a function \(\tilde\delta\in C^2(U)\) such that the normal map
\[
\Phi:\partial\Omega\times(-r_0,r_0)\to U,
\qquad
\Phi(\xi,s):=\xi+s\,\mathbf n(\xi),
\]
is a \(C^1\)-diffeomorphism onto \(U\). Moreover,
\[
\Pi(\Phi(\xi,s))=\xi,\qquad
\tilde\delta(\Phi(\xi,s))=s,
\]
where \(\Pi:U\to\partial\Omega\) is the nearest-point projection and
\(\mathbf n\) is the inward unit normal. In particular, the following statements hold.
\begin{itemize}
\item \(\tilde\delta=0\) on \(\partial\Omega\), \(\tilde\delta>0\) in
\(\Omega\cap U\), and \(\tilde\delta<0\) on \(U\setminus\overline\Omega\);
\item \(|\nabla\tilde\delta|=1\) on \(U\);
\item each \(x\in U\) has the unique representation
\[
x=\Pi(x)+\tilde\delta(x)\mathbf n(\Pi(x)).
\]
\end{itemize}
It follows that \(\delta=\tilde\delta\) on \(\Omega\cap U\), so \(\delta\in C^2(\{x\in\overline\Omega:\delta(x)<r_0\})\), \(|\nabla\delta|=1\) on \(\{0<\delta<r_0\}\), and \(\Delta\delta\) is bounded on \(\{x\in\Omega:\delta(x)\le r_0/2\}\).
\end{lemma}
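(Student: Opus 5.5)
The plan is the standard tubular neighbourhood construction for a compact \(C^2\) hypersurface, specialised to \(\partial\Omega\) with its inward normal.

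\textbf{Step 1: the normal field and the normal map.} Since \(\partial\Omega\) is compact (by Lemma~\ref{lem:components} it is a finite disjoint union of compact \(C^2\) hypersurfaces), the local graph description \(x_n>h(x')\) with \(h\in C^2\) gives a well-defined inward unit normal \(\mathbf n\in C^1(\partial\Omega;\mathbb R^n)\). Hence \(\Phi(\xi,s)=\xi+s\mathbf n(\xi)\) is \(C^1\) on \(\partial\Omega\times\mathbb R\). At \(s=0\), its differential is \((v,\sigma)\mapsto v+\sigma\mathbf n(\xi)\), an isomorphism \(T_\xi\partial\Omega\times\mathbb R\to\mathbb R^n\). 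By the inverse function theorem, \(\Phi\) is a local \(C^1\)-diffeomorphism near \(\partial\Omega\times\{0\}\).

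\textbf{Step 2: global injectivity.} Next I upgrade this to injectivity on \(\partial\Omega\times(-r_0,r_0)\) for small \(r_0\), by the usual compactness argument. Suppose \(\Phi(\xi_k,s_k)=\Phi(\eta_k,t_k)\) with \((\xi_k,s_k)\ne(\eta_k,t_k)\) and \(|s_k|,|t_k|\to0\). Pass to subsequences with \(\xi_k\to\xi\) and \(\eta_k\to\eta\). Then \(\xi=\eta\), which contradicts local injectivity near \((\xi,0)\). I additionally choose \(r_0\) smaller than the reach, that is, so that every \(x\) with \(\dist(x,\partial\Omega)<r_0\) has a unique nearest point. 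This holds because \(C^2\) compact hypersurfaces satisfy a uniform interior and exterior ball condition, with radius controlled by the \(C^2\) norms of the \(h\)'s.

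\textbf{Step 3: identification of \(\Phi(\partial\Omega\times(-r_0,r_0))\) with \(U\) and of \(s\) with the signed distance.} If \(x\in U\) and \(\xi=\Pi(x)\) is a nearest point, first-order optimality shows \(x-\xi\) is normal at \(\xi\). So \(x=\Phi(\xi,s)\) with \(|s|=\dist(x,\partial\Omega)<r_0\). The sign of \(s\) is positive exactly when \(x\in\Omega\), read off in the local graph chart. Conversely, for \(|s|<r_0\) the uniform ball condition with radius \(\ge r_0\) shows \(\xi\) is the nearest point of \(\Phi(\xi,s)\). Hence \(\dist=|s|\) and \(\Pi(\Phi(\xi,s))=\xi\). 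Define \(\tilde\delta:=\pi_2\circ\Phi^{-1}\). This is \(C^1\), equals \(\pm\dist(\cdot,\partial\Omega)\) with the stated signs, and gives the unique representation \(x=\Pi(x)+\tilde\delta(x)\mathbf n(\Pi(x))\).

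\textbf{Step 4: regularity of \(\tilde\delta\) and the consequences.} This is the main obstacle, since \(\Phi\) is only \(C^1\). First compute \(\nabla\tilde\delta(x)=\mathbf n(\Pi(x))\) by differentiating \(\tilde\delta(\Phi(\xi,s))=s\). Indeed, the tangential directions of \(\Phi\) are \((I+s\,d\mathbf n)v\), which are orthogonal to \(\mathbf n(\xi)\), while \(\partial_s\Phi=\mathbf n\). This gives \(|\nabla\tilde\delta|=1\). Then \(\nabla\tilde\delta=\mathbf n\circ\Pi=\mathbf n\circ\pi_1\circ\Phi^{-1}\) is a composition of \(C^1\) maps, so \(\tilde\delta\in C^2(U)\), following Gilbarg--Trudinger Lemma~14.16. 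Finally, \(\delta=\tilde\delta\) on \(\Omega\cap U\) by Lemma~\ref{lem:components} and Step 3. On the compact set \(\{0\le\delta\le r_0/2\}\cap\overline\Omega\subset U\), the continuous function \(\Delta\tilde\delta\) is bounded, which gives the last assertion.
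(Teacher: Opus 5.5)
Your proposal is correct and takes essentially the same approach as the paper, which simply cites the classical tubular-neighbourhood theorem and the \(C^2\)-regularity of the signed distance (Foote, Krantz--Parks). Your Steps 1--4 (inverse function theorem for \(\Phi\) plus compactness, a uniform ball radius giving unique nearest points, and \(\nabla\tilde\delta=\mathbf n\circ\Pi\in C^1\)) are the standard proof behind those references, and your compactness argument for boundedness of \(\Delta\delta\) on \(\{\delta\le r_0/2\}\) matches the paper's; the only harmless imprecisions are that the appeal to Lemma~\ref{lem:components} in Step 4 is unnecessary (since \(\delta=\dist(\cdot,\partial\Omega)\) by definition) and that the uniform ball radius depends on the chart sizes as well as on the \(C^2\) norms of the \(h\)'s.
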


\begin{proof}
The existence of a tubular neighbourhood with unique nearest-point projection
and normal-coordinate representation is standard for compact \(C^2\) hypersurfaces (consult, for instance, the discussion on \cite[pp.~153--154]{Foo84}). The \(C^2\)-regularity of the signed distance near a compact \(C^2\) hypersurface is also classical (see \cite{KP81} and \cite{Foo84}). The boundedness of \(\Delta\delta\) on
\(\{x\in\Omega:\delta(x)\le r_0/2\}\) then follows immediately from the \(C^2\)-regularity of \(\delta\) on \(\{x\in\overline\Omega:\delta(x)<r_0\}\).
\end{proof}

\begin{lemma}[Smooth unit-mass collar data near a given boundary point]\label{lem:collar-bump}
Let \(\Omega\subset\mathbb R^n\) be a nonempty bounded open set with \(C^2\)
boundary. There exists $\varepsilon_0>0$ such that for every choice of $\xi_0\in\partial\Omega$ and every $\varepsilon\in(0,\varepsilon_0)$ there is a function $u_0^\varepsilon\in C_c^\infty(\Omega)$, $u_0^\varepsilon\ge 0$, with
\[
\begin{aligned}
\supp(u_0^\varepsilon)
&\subset\{x\in\Omega:\ \varepsilon\le \delta(x)\le 2\varepsilon\},\\
\int_{\Omega}u_0^\varepsilon\,dx&=1.
\end{aligned}
\]
Moreover, we may choose $u_0^\varepsilon$ with
\[
\supp(u_0^\varepsilon)\subset B_{\varepsilon/4}\!\left(\xi_0+\frac{3\varepsilon}{2}\,\mathbf n(\xi_0)\right).
\]
\end{lemma}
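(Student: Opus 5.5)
The plan is to build \(u_0^\varepsilon\) as a rescaled standard bump centred at the point \(x_\varepsilon:=\xi_0+\tfrac{3\varepsilon}{2}\mathbf n(\xi_0)\). We first fix \(\varepsilon_0\) small relative to the tubular radius \(r_0\) of Lemma~\ref{lem:tubular}; the choice \(\varepsilon_0:=r_0/4\) will suffice. We also fix a radial \(\eta\in C_c^\infty(B_1(0))\) with \(\eta\ge0\) and \(\int\eta=1\), and define
\[
u_0^\varepsilon(x):=\Bigl(\frac{4}{\varepsilon}\Bigr)^{n}\eta\Bigl(\frac{4(x-x_\varepsilon)}{\varepsilon}\Bigr).
\]
This function is smooth and nonnegative, it has unit integral by a change of variables, and its support is contained in \(B_{\varepsilon/4}(x_\varepsilon)\). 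It remains to show that this ball lies in \(\Omega\), away from its boundary, and inside the collar \(\{\varepsilon\le\delta\le2\varepsilon\}\).

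The main step is to identify \(\delta(x_\varepsilon)=3\varepsilon/2\) and to confirm that \(x_\varepsilon\in\Omega\). Since \(3\varepsilon/2<r_0\), the point \(x_\varepsilon=\Phi(\xi_0,3\varepsilon/2)\) lies in \(U\). By Lemma~\ref{lem:tubular}, \(\tilde\delta(x_\varepsilon)=3\varepsilon/2>0\), and the sign properties of \(\tilde\delta\) then place \(x_\varepsilon\) in \(\Omega\cap U\). On that set \(\delta=\tilde\delta\), so \(\delta(x_\varepsilon)=3\varepsilon/2\).

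Next, \(\delta\) extends to the \(1\)-Lipschitz function \(\dist(\cdot,\partial\Omega)\) on \(\mathbb R^n\). Hence every \(x\in \overline{B_{\varepsilon/4}(x_\varepsilon)}\) satisfies \(\dist(x,\partial\Omega)\in[5\varepsilon/4,7\varepsilon/4]\), so \(x\notin\partial\Omega\). The closed ball is connected, does not meet \(\partial\Omega\), and contains the point \(x_\varepsilon\in\Omega\). It therefore lies entirely in \(\Omega\); this is a clopen argument using that \(\Omega\) and \(\mathbb R^n\setminus\overline\Omega\) are open and disjoint. Consequently \(\supp u_0^\varepsilon\) is a compact subset of \(\Omega\), so \(u_0^\varepsilon\in C_c^\infty(\Omega)\). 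On this ball \(\varepsilon\le\delta\le2\varepsilon\) holds, which gives both support inclusions.

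The only point needing care is the identification \(\delta=\tilde\delta\) at \(x_\varepsilon\), equivalently that the inward normal segment of length \(3\varepsilon/2\) realises the distance to the boundary. This is supplied directly by Lemma~\ref{lem:tubular} together with the choice \(\varepsilon_0<r_0\). Components of \(\Omega\) play no role, since everything above is local to \(\xi_0\). The constant \(\varepsilon_0\) is independent of \(\xi_0\) because \(r_0\) is.
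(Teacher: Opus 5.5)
Your proposal is correct and follows essentially the same route as the paper: the same centre $x_\varepsilon=\xi_0+\tfrac{3\varepsilon}{2}\mathbf n(\xi_0)$, the same choice $\varepsilon_0=r_0/4$, a rescaled bump supported in $B_{\varepsilon/4}(x_\varepsilon)$, and the $1$-Lipschitz bound $5\varepsilon/4\le\dist(\cdot,\partial\Omega)\le 7\varepsilon/4$ on that ball. The differences are minor. You show $B_{\varepsilon/4}(x_\varepsilon)\subset\Omega$ by a connectedness (clopen) argument where the paper uses a segment-crossing argument, and you derive $x_\varepsilon\in\Omega$ explicitly from the sign of $\tilde\delta$, which the paper only uses implicitly.
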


\begin{proof}
Let $r_0$ be from Lemma~\ref{lem:tubular} and set $\varepsilon_0:=r_0/4$. We fix $\xi_0\in\partial\Omega$ and $\varepsilon\in(0,\varepsilon_0)$, and define
\[
x_\varepsilon:=\xi_0+\frac{3\varepsilon}{2}\,\mathbf n(\xi_0).
\]
It follows that $\delta(x_\varepsilon)=3\varepsilon/2$. We choose $\chi\in C_c^\infty(B_{1/4}(0))$ with $\chi\ge 0$ and $\int_{\mathbb R^n}\chi\,dx=1$, and define
\[
u_0^\varepsilon(x):=\varepsilon^{-n}\chi\!\left(\frac{x-x_\varepsilon}{\varepsilon}\right).
\]
It follows that $u_0^\varepsilon\in C_c^\infty(\mathbb R^n)$, $u_0^\varepsilon\ge0$, $\int_{\mathbb R^n}u_0^\varepsilon\,dx=1$, and $\supp(u_0^\varepsilon)\subset B_{\varepsilon/4}(x_\varepsilon)$. Since $x\mapsto \dist(x,\partial\Omega)$ is $1$-Lipschitz, for every $x\in B_{\varepsilon/4}(x_\varepsilon)$,
\[
\frac{5\varepsilon}{4}
\le \dist(x,\partial\Omega)
\le \frac{7\varepsilon}{4}.
\]
Since $\dist(x_\varepsilon,\partial\Omega)=3\varepsilon/2>\varepsilon/4$, we have $B_{\varepsilon/4}(x_\varepsilon)\cap\partial\Omega=\varnothing$. Suppose there existed $y\in B_{\varepsilon/4}(x_\varepsilon)\setminus\Omega$, then the segment $[x_\varepsilon,y]$ would intersect $\partial\Omega$ at some point $z$. In particular $|x_\varepsilon-z|\le |x_\varepsilon-y|<\varepsilon/4$, which would contradict $\dist(x_\varepsilon,\partial\Omega)=3\varepsilon/2$. It must hold that $B_{\varepsilon/4}(x_\varepsilon)\subset\Omega$. We then obtain
\[
\supp(u_0^\varepsilon)\subset\{x\in\Omega:\ \varepsilon\le \delta(x)\le 2\varepsilon\}.
\]
Finally, we conclude that $u_0^\varepsilon\in C_c^\infty(\Omega)$, $u_0^\varepsilon\ge0$, and $\int_\Omega u_0^\varepsilon\,dx=1$.
\end{proof}

\subsection{Dirichlet semigroups}\label{subsec:dirichlet-semigroups}
For \(c\ge0\), we set
\[
\mathcal D_c
:=
\bigl\{\rho\in C^\infty(\overline\Omega): \rho\ge0 \text{ on }\overline\Omega,\ \rho\equiv c \text{ near }\partial\Omega\bigr\},
\]
and identify \(\rho\) with \(\rho\,dx\) whenever it is convenient. We also set
\[
\mathcal X_c:=\{\rho\,dx:\rho\in\mathcal D_c\}.
\]
We use \(\mathcal X_c\) only as a class for the initial data. Note that \(\mathcal X_c\) is \textit{not} invariant under the affine semigroup \(S_t\). In general, \(S_t\rho\) has boundary value \(c\) but is not necessarily identically \(c\) in a fixed neighbourhood of \(\partial\Omega\).

We fix \(c\ge0\), and let \(P_t\) denote the homogeneous Dirichlet heat semigroup
(the case \(c=0\)). Whenever \(\rho_0-c\in L^1(\Omega)\), we define the associated
affine semigroup by
\begin{equation}\label{eq:def-St}
S_t\rho_0:=c+P_t(\rho_0-c).
\end{equation}
For \(\rho_0\in\mathcal D_c\), this is the classical solution of the Dirichlet
problem with a constant boundary value \(c\), that is,
\begin{equation}\label{eq:dirichlet}
\begin{aligned}
\partial_t\rho&=\Delta\rho &&\text{in }\Omega\times(0,\infty),\\
\rho(t,\cdot)&=c &&\text{on }\partial\Omega\times(0,\infty),\\
\rho(0,\cdot)&=\rho_0.
\end{aligned}
\end{equation}

\begin{definition}[Action of \texorpdfstring{$P_t$}{Pt} on measures]\label{def:Pt-on-measures}
For \(\mu\in\mathcal M(\Omega)\) and \(t\ge0\), we define \(P_t\mu\in\mathcal M(\Omega)\) by
\[
\int_\Omega \varphi\,d(P_t\mu)=\int_\Omega (P_t\varphi)\,d\mu
\qquad\forall \varphi\in C_{\partial\Omega}(\overline\Omega),
\]
where
\[
C_{\partial\Omega}(\overline\Omega)
:=\{f\in C(\overline\Omega): f|_{\partial\Omega}=0\}.
\]
We identify this space isometrically with \(C_0(\Omega)\), the space of
continuous functions on \(\Omega\) that vanish at the boundary, by restriction to \(\Omega\). Given that the killed heat semigroup is a positive contraction on this
\(C_0(\Omega)\), the right-hand side defines a positive bounded linear
functional on \(C_{\partial\Omega}(\overline\Omega)\), that is, a unique finite nonnegative Radon measure by the Riesz representation theorem. If \(\mu=\rho\,dx\) with \(\rho\in L^1(\Omega)\), our next lemma shows, for the Dirichlet Laplacian, that this dual definition agrees with the usual \(L^1\)-Dirichlet semigroup.
We use here the standard sub-Markovian Dirichlet heat semigroup on
\(C_0(\Omega)\), namely: positivity, \(C_0\)-contractivity, and strong continuity (see, for example, \cite[Chapters~1--2]{Ouh05}). In particular, the dual action maps finite
positive Radon measures to finite positive Radon measures and does not increase mass. For absolutely continuous data, we also write
\[
S_t\mu:=(S_t\rho)\,dx
\qquad\text{when }\mu=\rho\,dx.
\]
\end{definition}

\begin{lemma}[Duality for the Dirichlet Laplacian on \(L^1\)-\(C_0\)]\label{lem:duality-L1-C0}
For every \(t\ge0\), every \(f\in L^1(\Omega)\), and every
\(\varphi\in C_{\partial\Omega}(\overline\Omega)\),
\[
\int_\Omega \varphi\,P_t f\,dx
=
\int_\Omega (P_t\varphi)\,f\,dx.
\]
\end{lemma}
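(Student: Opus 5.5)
The plan is to prove the identity first for a dense class of pairs and then pass to general $f$ and $\varphi$ by continuity. The two ingredients are the symmetry of the Dirichlet heat semigroup on $L^2(\Omega)$ and the consistency of the $L^1$, $L^2$ and $C_0$ realisations of $P_t$ on their common domains. The case $t=0$ is trivial, so fix $t>0$.

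\textbf{Symmetric case.} Let $P_t$ also denote the $L^2$ semigroup generated by the Friedrichs extension of the Dirichlet Laplacian. It is self-adjoint, so
\[
\int_\Omega \varphi\,P_t f\,dx=\int_\Omega (P_t\varphi) f\,dx
\]
for all $f,\varphi\in L^2(\Omega)$. This semigroup is sub-Markovian. For $p\in[1,\infty)$, its restriction to $L^1\cap L^2$ extends to a consistent family of contraction semigroups on $L^p$. The $C_0(\Omega)$ semigroup is the restriction of the $L^\infty$ version to $C_0(\Omega)$ (cf. \cite{Ouh05}). On a bounded set $C_0(\Omega)\subset L^2(\Omega)$, so for $\varphi\in C_{\partial\Omega}(\overline\Omega)$ the $C_0$-semigroup image $P_t\varphi$ coincides a.e. with the $L^2$ image. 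Likewise, for $f\in L^2\subset L^1$ the $L^1$ image $P_tf$ agrees with the $L^2$ image. Hence the identity holds for every $f\in L^2(\Omega)$ and every $\varphi\in C_{\partial\Omega}(\overline\Omega)$.

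\textbf{General $f$.} Take $f\in L^1(\Omega)$ and choose $f_k\in L^2(\Omega)$ (truncations, say) with $f_k\to f$ in $L^1$. On the left-hand side, the $L^1$ contractivity of $P_t$ gives $P_tf_k\to P_tf$ in $L^1$; since $\varphi$ is bounded, the left-hand sides converge. On the right-hand side, $P_t\varphi$ is bounded by $\|\varphi\|_\infty$ by $C_0$-contractivity, so the right-hand sides also converge. Passing to the limit yields the claim.

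The main obstacle is the consistency step, namely identifying the $C_0$-semigroup, the $L^1$-semigroup and the $L^2$-semigroup on common elements. The cleanest route is to use the heat kernel representation
\[
P_tg(x)=\int_\Omega p_t^\Omega(x,y)g(y)\,dy,
\]
with a symmetric, nonnegative and bounded Dirichlet heat kernel $p_t^\Omega$, valid for $t>0$ on a bounded domain. With this representation the identity is just Fubini applied to $p_t^\Omega(x,y)\varphi(x)f(y)$, which is integrable because $p_t^\Omega$ is bounded, $\varphi$ is bounded and $f\in L^1$. For the $C_0$ version one must still check that the kernel formula defines the $C_0$ semigroup; this follows by density of $C_c(\Omega)$ together with consistency on $L^2\cap C_0$. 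Both facts are standard and I would cite them rather than reprove them.
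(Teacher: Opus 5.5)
Your proof is correct and takes essentially the same route as the paper: you use $L^2$ self-adjointness on a dense class, then pass to the limit using $L^1$-contractivity on the $f$-side and sup-norm ($C_0$) contractivity on the $\varphi$-side. The differences are minor. You skip approximating $\varphi$, which is unnecessary since $C_{\partial\Omega}(\overline\Omega)\subset L^2(\Omega)$ on a bounded set. You also state explicitly that the $L^1$, $L^2$ and $C_0$ realisations agree, which the paper's argument uses without comment.
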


\begin{proof}
Let us choose \(f_k\in C_c^\infty(\Omega)\) with \(f_k\to f\) in \(L^1(\Omega)\), and choose \(\varphi_k\in C_c^\infty(\Omega)\) with \(\varphi_k\to\varphi\) uniformly on \(\overline\Omega\). The latter is possible because \(C_c^\infty(\Omega)\) is uniformly dense in \(C_{\partial\Omega}(\overline\Omega)\) with respect to the uniform norm. By the self-adjointness of the Dirichlet Laplacian on \(L^2(\Omega)\), we have
\[
\int_\Omega \varphi_k\,P_t f_k\,dx
=
\int_\Omega (P_t\varphi_k)\,f_k\,dx.
\]
Because \(P_t\) is a contraction on \(L^1(\Omega)\) and on \(C_{\partial\Omega}(\overline\Omega)\), both sides converge to the limits in question.
\end{proof}

\subsection{A Hopf-type boundary lower bound}

\begin{lemma}[Uniform linear lower bound near the boundary]\label{lem:hopf-linear-general}
Let
\[
\mathcal Au := \nabla\cdot(A(x)\nabla u)+b(x)\cdot\nabla u+q(x)\,u
\]
on $\Omega$, where \(A\in C^\infty(\overline\Omega;\mathbb R^{n\times n})\),
\(b\in C^\infty(\overline\Omega;\mathbb R^n)\), and \(q\in C^\infty(\overline\Omega)\).
We do not assume that the matrix \(A\) is symmetric. We write
\[
A_s:=\frac{A+A^{\mathsf T}}2,
\]
and assume that there exist \(0<\lambda\le \Lambda<\infty\) such that
\[
\lambda|\xi|^2\le \xi\cdot A_s(x)\xi\le \Lambda|\xi|^2,
\qquad
\|A(x)\|\le \Lambda,
\]
for every \(x\in\overline\Omega\) and every \(\xi\in\mathbb R^n\). Let $\eta\in C^\infty(\overline\Omega)$ satisfy $\eta\ge0$, $\eta\not\equiv0$, and $\eta=0$ on $\partial\Omega$. We further take $t_0>0$, and let $w(t,\cdot):=P_t^{\mathcal A}\eta$, where $P_t^{\mathcal A}$ is the homogeneous Dirichlet semigroup generated by $\mathcal A$. For each connected component \(\Omega_j\subset\Omega\), the restriction \(w(t_0,\cdot)\restriction_{\Omega_j}\) extends to a \(C^1\)-function on \(\overline{\Omega_j}\). Moreover, for every connected component $\Omega'\subset\Omega$ such that $\eta\not\equiv0$ on $\Omega'$, we have $w(t_0,x)>0$ for all $x\in\Omega'$, and there exist constants $\alpha=\alpha(t_0,\eta,\mathcal A,\Omega')>0$ and $r=r(t_0,\eta,\mathcal A,\Omega')>0$ such that
\[
w(t_0,x)\ge \alpha\,\delta(x)\qquad\text{for all }x\in\Omega'\text{ with }\delta(x)\le r.
\]
On an arbitrary component $\Omega''$ with $\eta\equiv0$, it holds that $w(t,\cdot)\equiv0$ on $\Omega''$ for all $t\ge0$.
\end{lemma}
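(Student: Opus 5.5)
Our plan is to reduce to a single connected component and then combine parabolic regularity, the strong maximum principle and the Hopf boundary lemma. By Lemma~\ref{lem:components}, the closures \(\overline{\Omega_1},\dots,\overline{\Omega_N}\) are pairwise disjoint and each \(\Omega_j\) is a bounded \(C^2\) domain. The Dirichlet problem for \(\mathcal A\) on \(\Omega\) therefore decouples into the Dirichlet problems on the components: \(P_t^{\mathcal A}\eta\restriction_{\Omega_j}=P_t^{\mathcal A,\Omega_j}(\eta\restriction_{\Omega_j})\). This is immediate for the form/\(L^2\) construction of the semigroup, because \(H^1_0(\Omega)=\bigoplus_j H^1_0(\Omega_j)\) and the bilinear form splits accordingly. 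The last assertion follows at once: if \(\eta\equiv0\) on \(\Omega''\), then the component solution has zero data and vanishes identically. From now on we work on a fixed component \(\Omega'\) on which \(\eta\not\equiv0\).

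\emph{Regularity.} Write \(w\) in nondivergence form. Since \(A\) is smooth, \(\nabla\cdot(A\nabla u)=A_s:D^2u+(\nabla\cdot A^{\mathsf T})\cdot\nabla u\), so \(\mathcal A\) is uniformly parabolic with smooth coefficients. The datum \(\eta\) is smooth, vanishes on \(\partial\Omega'\), and is compatible at order zero. By standard \(L^p\)-parabolic theory (\(W^{2,1}_p\) estimates on \(C^2\) domains for large \(p\)) on \(\Omega'\times(t_0/2,2t_0)\), combined with interior-in-time smoothing, we obtain \(w\in W^{2,1}_p(\Omega'\times(t_0/2,2t_0))\) for every \(p<\infty\). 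In particular \(w(t_0,\cdot)\in W^{2-2/p,p}(\Omega')\subset C^{1,\beta}(\overline{\Omega'})\) for large \(p\). A cleaner alternative is to cite Schauder theory up to the boundary with \(C^{1+\beta,(1+\beta)/2}\) bounds on \(C^{1,1}\)-type domains; for merely \(C^2\) boundary, \(C^{1,\beta}\) regularity is the honest output. This gives the \(C^1\) extension to \(\overline{\Omega_j}\). \(L^2\)-to-classical identification is standard, since the \(L^2\) solution coincides with the strong solution by uniqueness.

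\emph{Positivity.} Since \(\eta\ge0\) and \(\eta\not\equiv0\), the weak maximum principle gives \(w\ge0\). The term \(qu\) has no sign, so we substitute \(\tilde w:=e^{-Kt}w\) with \(K\ge\|q^+\|_\infty\); then \(\tilde w\) solves an equation with zero-order coefficient \(q-K\le0\). The parabolic strong maximum principle on the connected set \(\Omega'\) then says: if \(\tilde w(t_0,x_0)=0\) at an interior point, then \(\tilde w\equiv0\) on \(\Omega'\times(0,t_0]\). Letting \(t\to0\) and using \(L^2\) continuity would force \(\eta\equiv0\) on \(\Omega'\), a contradiction. Hence \(w(t_0,\cdot)>0\) in \(\Omega'\).

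\emph{Linear lower bound.} This is the main obstacle. It needs a Hopf lemma at time \(t_0\) that is uniform along the boundary. We expect the parabolic Hopf lemma to work: at each \(\xi\in\partial\Omega'\) the interior ball condition holds with a uniform radius \(r_1\), available from Lemma~\ref{lem:tubular}. We apply Hopf to \(\tilde w\) on a parabolic region \(\{(x,t):|x-(\xi+r_1\mathbf n(\xi))|<r_1,\ t\in(t_0/2,t_0]\}\), using positivity of \(\tilde w\) on the compact set \(\{\delta\ge r_1/2\}\times[t_0/2,t_0]\cap\Omega'\) as the lower barrier. Concretely, we compare \(\tilde w\) with \(\kappa\bigl(e^{-a|x-z|^2}-e^{-ar_1^2}\bigr)\psi(t)\) on the annular region \(r_1/2<|x-z|<r_1\), with \(z=\xi+r_1\mathbf n(\xi)\). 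Here \(a\) is large depending only on \(\lambda\), \(\Lambda\) and the coefficient norms, \(\psi\) is a time cutoff vanishing at \(t_0/2\), and \(\kappa\) is bounded by the uniform positive minimum \(m_*\) on the inner sphere. All constants are independent of \(\xi\). This yields \(w(t_0,\xi+s\mathbf n(\xi))\ge\alpha s\) for \(s\le r:=r_1/2\), with \(\alpha\) uniform, and \(\delta(\xi+s\mathbf n(\xi))=s\) on the collar. Alternatively, one can take \(\partial_{\mathbf n}w(t_0,\cdot)>0\) pointwise from Hopf, use continuity of \(\nabla w(t_0,\cdot)\) up to the boundary from the regularity step, and compactness of \(\partial\Omega'\) to get a uniform lower bound \(\partial_{\mathbf n}w\ge2\alpha\). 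The claimed bound then follows via the mean value theorem along normals in the tube.
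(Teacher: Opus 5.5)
Your overall structure matches the paper's proof: componentwise splitting, $W^{2,1}_p$ regularity up to the boundary at time $t_0$, the substitution $\tilde w=e^{-Kt}w$, the strong maximum principle, Hopf at each boundary point, then compactness and integration along normals. Your ``alternatively'' route for the last step is exactly what Appendix~\ref{app:hopf-detail} does, and it is fine.

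The concrete uniform barrier you write down, however, does not work. Take $\Phi=\kappa\phi(x)\psi(t)$ with $\phi=e^{-a|x-z|^2}-e^{-ar_1^2}$. For $\Phi$ to be a subsolution of $\partial_t\Phi\le(\mathcal A-K)\Phi$ on the annular cylinder, you need $\psi'(t)\phi(x)\le\psi(t)\,(\mathcal A-K)\phi(x)$ pointwise. At any fixed $x$ in the open annulus the ratio $(\mathcal A-K)\phi/\phi$ is a finite number $C_x$, so $\psi'\le C_x\psi$. Together with $\psi(t_0/2)=0$ and $\psi\ge0$, Gr\"onwall forces $\psi\equiv0$. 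If instead $\psi(t_0/2)>0$, comparison on the bottom face needs $\tilde w(t_0/2,\cdot)\ge c\,\phi$ near the tangency point $\xi$. That is a linear lower bound at an earlier time, so the argument is circular. A spatial Hopf barrier multiplied by a time cutoff is simply the wrong object. As submitted, your proof is complete only through the second route.

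The fix is the paper's paraboloid barrier. Take $r_1^2<t_0/2$, the region $\{r_1^2/4<|x-z|^2<r_1^2-(t_0-t),\ t<t_0\}$, and
$\zeta=e^{-a(|x-z|^2+(t_0-t))}-e^{-ar_1^2}$.
Then $\zeta$ vanishes on the whole curved lower and lateral boundary, so no information at an earlier time slice is needed. The term $-\partial_t\zeta=-a e^{-a(|x-z|^2+(t_0-t))}$ is absorbed by the $a^2$ term.

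With this correction your first route becomes a genuine, slightly more economical alternative to the paper's. All inner cylinders $\{|x-z_\xi|=r_1/2\}\times[t_0-3r_1^2/4,t_0]$ lie in the compact set $\{\delta\ge r_1/2\}\times[t_0/2,t_0]$. There $\tilde w\ge m_*>0$, so comparison gives
$\tilde w(t_0,\xi+s\mathbf n(\xi))\ge m_*\bigl(e^{-a(r_1-s)^2}-e^{-ar_1^2}\bigr)\ge c\,s$
for $s\le r_1/2$, with constants uniform in $\xi$. You then no longer need continuity of $\nabla w(t_0,\cdot)$ up to the boundary for the lower bound, though it is still needed for the $C^1$-extension claim.
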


\begin{proof}
The statement follows componentwise. If \(\eta\equiv0\) on a component, by uniqueness for the homogeneous Dirichlet problem, \(w\equiv0\) holds there. We fix a connected component \(\Omega'\) on which \(\eta\not\equiv0\). We defer the proof details to Appendix~\ref{app:hopf-detail}. It gives \(w(t_0,\cdot)\in C^1(\overline{\Omega'})\), strict positivity of \(w\) in \((0,t_0]\times\Omega'\), and
\(\partial_{\mathbf n}w(t_0,\xi)>0\) for every \(\xi\in\partial\Omega'\), where
\(\mathbf n\) is the inward unit normal. We now apply compactness of \(\partial\Omega'\) and the tubular representation of Lemma~\ref{lem:tubular} to \(\Omega'\), and obtain constants \(\alpha,r>0\) such that
\[
w(t_0,x)\ge \alpha\,\delta(x)
\qquad\text{for all }x\in\Omega'\text{ with }\delta(x)\le r .
\]
In Appendix~\ref{app:hopf-detail}, we also prove the \(C^1\)-extension on components on which \(\eta\equiv0\).
\end{proof}

\begin{proposition}[Boundary-layer test measures for the Dirichlet Laplacian]\label{prop:amplification-packet-ell}
We fix \(p>1\). For every \(t>0\), there exist constants \(\alpha>0\) and \(r_*>0\) with the following property: for every \(\varepsilon\in(0,r_*/4)\), there exists \(u_0^\varepsilon\in C_c^\infty(\Omega)\), \(u_0^\varepsilon\ge0\), such that
\[
\int_\Omega u_0^\varepsilon\,dx=1,
\qquad
\supp(u_0^\varepsilon)\subset\{x\in\Omega:\ \varepsilon\le \delta(x)\le 2\varepsilon\}.
\]
If we set \(\mu_\varepsilon:=u_0^\varepsilon\,dx\), we obtain
\[
W_{b,p}(\mu_\varepsilon,0)^p\le (2\varepsilon)^p,
\qquad
W_{b,p}\bigl(P_t\mu_\varepsilon,0\bigr)^p
\ge \alpha\,\varepsilon.
\]
\end{proposition}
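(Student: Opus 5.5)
The plan is to combine the collar data of Lemma~\ref{lem:collar-bump}, the exact distance-to-zero formula of Lemma~\ref{lem:tozero}, the duality of Definition~\ref{def:Pt-on-measures} and Lemma~\ref{lem:duality-L1-C0}, and the Hopf-type bound of Lemma~\ref{lem:hopf-linear-general} applied with \(\mathcal A=\Delta\).

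First, the test function. Fix a component \(\Omega'\) of \(\Omega\) (Lemma~\ref{lem:components}) and a point \(\xi_0\in\partial\Omega'\). For \(\varepsilon<\varepsilon_0\), Lemma~\ref{lem:collar-bump} gives \(u_0^\varepsilon\ge0\) of unit mass. It is supported in a ball \(B_{\varepsilon/4}(\xi_0+\tfrac{3\varepsilon}{2}\mathbf n(\xi_0))\) contained in \(\Omega\) and inside the layer \(\{\varepsilon\le\delta\le2\varepsilon\}\). That ball is connected and touches points arbitrarily close to \(\xi_0\), so it lies in \(\Omega'\). The upper bound is then immediate from Lemma~\ref{lem:tozero}: \(W_{b,p}(\mu_\varepsilon,0)^p=\int\delta^p u_0^\varepsilon\,dx\le(2\varepsilon)^p\).

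Second, the lower bound. We want a test function that is comparable to \(\delta^p\) from below, vanishes on \(\partial\Omega\), and is positive on \(\Omega'\). Choose \(\eta\in C^\infty(\overline\Omega)\), \(\eta\ge0\), supported in a compact subset of \(\Omega'\), with \(\eta\not\equiv0\) and \(\eta\le\delta^p\); for instance \(\eta=\kappa\psi\) with \(\psi\in C_c^\infty(\Omega')\), \(0\le\psi\le1\), and \(\kappa\le\min_{\supp\psi}\delta^p\). Then \(\eta\le\delta^p\) on all of \(\Omega\). Positivity of \(P_t\) on measures gives \(\int\delta^p\,d(P_t\mu_\varepsilon)\ge\int\eta\,d(P_t\mu_\varepsilon)\). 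Here we use \(\delta^p-\eta\ge0\) and that \(\delta^p-\eta\in C_{\partial\Omega}(\overline\Omega)\); strictly, positivity of the measure \(P_t\mu_\varepsilon\) suffices. By Definition~\ref{def:Pt-on-measures} this equals \(\int (P_t\eta)\,u_0^\varepsilon\,dx\).

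Lemma~\ref{lem:hopf-linear-general} with \(A=I\), \(b=0\), \(q=0\) and \(t_0=t\) yields \(\alpha',r>0\) such that \(P_t\eta\ge\alpha'\delta\) on \(\{x\in\Omega':\delta(x)\le r\}\). Set \(r_*:=\min(r,4\varepsilon_0)\). For \(\varepsilon<r_*/4\), the support of \(u_0^\varepsilon\) lies in \(\Omega'\cap\{\delta\le2\varepsilon\le r\}\), where \(\delta\ge\varepsilon\). Hence
\[
\int (P_t\eta)\,u_0^\varepsilon\,dx\ge\alpha'\varepsilon\int u_0^\varepsilon\,dx=\alpha'\varepsilon .
\]
Lemma~\ref{lem:tozero} then gives the claim with \(\alpha=\alpha'\).

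The only delicate point is making sure that the test measure sits in the component where the Hopf bound holds, and that the constants \(\alpha,r\) are independent of \(\varepsilon\). Both hold because \(\xi_0\), \(\Omega'\) and \(\eta\) are fixed before \(\varepsilon\) varies. The real analytic content, the boundary Hopf estimate, is already supplied by Lemma~\ref{lem:hopf-linear-general}.
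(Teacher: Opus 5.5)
Your proof is correct and follows the paper's argument essentially step for step: the same cutoff $\eta=\kappa\psi\le\delta^p$ on a fixed component $\Omega'$, the Hopf bound for $P_t\eta$ from Lemma~\ref{lem:hopf-linear-general}, duality, and the collar packets, with constants fixed before $\varepsilon$ varies. Two small differences: you use the measure-level duality of Definition~\ref{def:Pt-on-measures} directly instead of first identifying $P_t\mu_\varepsilon=(P_tu_0^\varepsilon)\,dx$, and you apply Lemma~\ref{lem:collar-bump} to $\Omega$ rather than to $\Omega'$; in the latter case, justify that the ball lies in $\Omega'$ via the connected set formed by the ball and the normal segment $\{\xi_0+s\mathbf n(\xi_0):0<s\le 3\varepsilon/2\}\subset\Omega$, since the ball itself stays at distance at least $5\varepsilon/4$ from $\xi_0$ (or apply the collar lemma on $\Omega'$ as the paper does).
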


\begin{proof}
It suffices to construct the witnesses inside one connected component, because
extension by zero to the other components preserves admissibility as
measures on \(\Omega\). It also preserves the boundary distance by Lemma~\ref{lem:components}. Let us fix such a component \(\Omega'\subset\Omega\), and choose
\(\varphi\in C_c^\infty(\Omega')\) with \(\varphi\ge0\) and \(\varphi\not\equiv0\).
Since \(\supp(\varphi)\Subset\Omega'\), we get
\[
\delta_*:=\min_{x\in\supp(\varphi)}\delta(x)>0.
\]
We also set
\[
\kappa:=\frac{\delta_*^p}{2\|\varphi\|_{L^\infty(\Omega)}},
\qquad
\eta:=\kappa\varphi.
\]
We extend \(\eta\) by zero outside \(\Omega'\), and use
\(\supp\varphi\Subset\Omega'\), which gives
\[
        \eta\in C_c^\infty(\Omega)\subset C^\infty(\overline\Omega),
        \qquad
        \eta\ge0,
        \qquad
        \eta\not\equiv0,
        \qquad
        \eta=0 \ \text{on } \partial\Omega .
\]
Moreover,
\[
0\le \eta(x)\le \delta(x)^p\qquad\forall x\in\Omega.
\]
Let \(w:=P_t\eta\). We apply Lemma~\ref{lem:hopf-linear-general} with
\(\mathcal A=\Delta\) on the component \(\Omega'\), which gives constants
\(\alpha>0\) and \(r>0\) such that
\[
w(x)\ge \alpha\,\delta(x)\qquad\text{for all }x\in\Omega'\text{ with }\delta(x)\le r.
\]
Let \(\varepsilon_0^{\mathrm{coll}}(\Omega')>0\) be the constant from Lemma~\ref{lem:collar-bump} that we apply to the bounded \(C^2\) domain \(\Omega'\), and set
\[
r_*:=\min\{r,4\varepsilon_0^{\mathrm{coll}}(\Omega')\}.
\]
We fix \(\xi_0\in\partial\Omega'\). For each \(\varepsilon\in(0,r_*/4)\), Lemma~\ref{lem:collar-bump}, which we apply on \(\Omega'\), yields a function
\(u_0^\varepsilon\in C_c^\infty(\Omega')\), \(u_0^\varepsilon\ge0\), such that
\[
\int_{\Omega'}u_0^\varepsilon\,dx=1
\]
and
\[
\begin{aligned}
\supp(u_0^\varepsilon)
&\subset
B_{\varepsilon/4}\!\left(\xi_0+\frac{3\varepsilon}{2}\,\mathbf n(\xi_0)\right)\\
&\subset
\{x\in\Omega':\ \varepsilon\le \dist(x,\partial\Omega')\le 2\varepsilon\}.
\end{aligned}
\]
By Lemma~\ref{lem:components}, \(\dist(x,\partial\Omega')=\delta(x)\) on \(\Omega'\). If we extend \(u_0^\varepsilon\) by \(0\) to \(\Omega\), we still have \(u_0^\varepsilon\in C_c^\infty(\Omega)\), \(\int_\Omega u_0^\varepsilon\,dx=1\), and
\[
\supp(u_0^\varepsilon)
\subset
\Omega'\cap\{x\in\Omega:\ \varepsilon\le \delta(x)\le 2\varepsilon\}.
\]
We also set \(\mu_\varepsilon:=u_0^\varepsilon\,dx\). By Lemma~\ref{lem:tozero},
\[
W_{b,p}(\mu_\varepsilon,0)^p
=\int_\Omega \delta(x)^p\,u_0^\varepsilon(x)\,dx
\le (2\varepsilon)^p.
\]
By Lemma~\ref{lem:duality-L1-C0}, \(P_t\mu_\varepsilon=(P_tu_0^\varepsilon)\,dx\).
By positivity, we obtain \(P_tu_0^\varepsilon\ge0\). If we now use Lemma~\ref{lem:tozero}, the bound \(0\le\eta\le\delta^p\), and Lemma~\ref{lem:duality-L1-C0}, we obtain
\[
\begin{aligned}
W_{b,p}\bigl(P_t\mu_\varepsilon,0\bigr)^p
&=\int_\Omega \delta(x)^p\,P_tu_0^\varepsilon(x)\,dx\\
&\ge \int_\Omega \eta(x)\,P_tu_0^\varepsilon(x)\,dx\\
&=\int_\Omega (P_t\eta)(x)\,u_0^\varepsilon(x)\,dx\\
&=\int_\Omega w(x)\,u_0^\varepsilon(x)\,dx.
\end{aligned}
\]
Since \(\supp(u_0^\varepsilon)\subset\Omega'\cap\{\varepsilon\le\delta\le2\varepsilon\}\subset\Omega'\cap\{\delta\le r\}\), the lower bound for \(w\), which we just established, gives
\[
W_{b,p}\bigl(P_t\mu_\varepsilon,0\bigr)^p
\ge \alpha\varepsilon\int_\Omega u_0^\varepsilon(x)\,dx
=\alpha\varepsilon.
\]
\end{proof}

\begin{proposition}[Laplacian lower-bound results for
\texorpdfstring{$p>1$}{p>1}]\label{prop:Delta-lower-consequences}
For the Dirichlet Laplacian, we fix \(p>1\) and \(t>0\). It follows that the map
\[
P_t:\mathcal M(\Omega)\to\mathcal M(\Omega)
\]
is discontinuous at \(0\) with respect to \(W_{b,p}\). Moreover, for every
\(m>0\),
\[
\Lip_{W_{b,p}}\!\bigl(P_t;\mathcal M_{\le m}(\Omega)\bigr)=\infty.
\]
\end{proposition}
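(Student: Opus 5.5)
Both claims follow from Proposition~\ref{prop:amplification-packet-ell} together with the homogeneity in Remark~\ref{rem:Wbp-homogeneity}. We fix \(p>1\) and \(t>0\), and take \(\alpha,r_*\) and the unit-mass packets \(\mu_\varepsilon=u_0^\varepsilon\,dx\) from that proposition, so that
\[
W_{b,p}(\mu_\varepsilon,0)^p\le(2\varepsilon)^p
\qquad\text{and}\qquad
W_{b,p}(P_t\mu_\varepsilon,0)^p\ge\alpha\varepsilon
\]
for every \(\varepsilon\in(0,r_*/4)\). Linearity of \(P_t\) on measures (Definition~\ref{def:Pt-on-measures}) gives \(P_t(a\mu)=aP_t\mu\) and \(P_t0=0\). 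Hence all comparisons can be made against the zero measure, and Lemma~\ref{lem:tozero} identifies every such distance with a boundary moment.

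\emph{Infinite Lipschitz constant on \(\mathcal M_{\le m}(\Omega)\).} We fix \(m>0\) and use the rescaled packets \(m\mu_\varepsilon\in\mathcal M_{\le m}(\Omega)\) together with \(0\in\mathcal M_{\le m}(\Omega)\). By homogeneity,
\[
\frac{W_{b,p}(P_t(m\mu_\varepsilon),0)}{W_{b,p}(m\mu_\varepsilon,0)}
=\frac{W_{b,p}(P_t\mu_\varepsilon,0)}{W_{b,p}(\mu_\varepsilon,0)}
\ge\frac{(\alpha\varepsilon)^{1/p}}{2\varepsilon}
=\tfrac12\alpha^{1/p}\varepsilon^{1/p-1}.
\]
The denominator is nonzero because \(\mu_\varepsilon\neq0\) and \(\delta\ge\varepsilon\) on its support. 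Since \(p>1\), the right-hand side tends to \(\infty\) as \(\varepsilon\to0\), so \(\Lip_{W_{b,p}}(P_t;\mathcal M_{\le m}(\Omega))=\infty\).

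\emph{Discontinuity at \(0\) on \(\mathcal M(\Omega)\).} Here the mass is allowed to blow up. We set \(a_\varepsilon:=\varepsilon^{-1}\) and \(\nu_\varepsilon:=a_\varepsilon\mu_\varepsilon\). Then
\[
W_{b,p}(\nu_\varepsilon,0)^p\le \varepsilon^{-1}(2\varepsilon)^p=2^p\varepsilon^{p-1}\to0,
\]
while
\[
W_{b,p}(P_t\nu_\varepsilon,0)^p=\varepsilon^{-1}W_{b,p}(P_t\mu_\varepsilon,0)^p\ge\alpha>0 .
\]
So \(\nu_\varepsilon\to0\) in \(W_{b,p}\), but \(P_t\nu_\varepsilon\not\to P_t0=0\), which proves discontinuity at \(0\).

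There is no real obstacle here beyond checking that linearity and positive homogeneity of \(P_t\) hold for the dual action on measures, which is immediate from the defining duality identity. The analytic difficulty, namely the Hopf-type lower bound, is already contained in Proposition~\ref{prop:amplification-packet-ell}.
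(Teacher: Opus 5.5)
Your proposal is correct and follows the paper's proof essentially verbatim. Like the paper, you take the ratio bound $\tfrac12\alpha^{1/p}\varepsilon^{1/p-1}$ from the packets of Proposition~\ref{prop:amplification-packet-ell} and rescale by $m$ via Remark~\ref{rem:Wbp-homogeneity} to get the infinite Lipschitz constant. For discontinuity at $0$ you use the same blow-up $\varepsilon^{-1}\mu_\varepsilon$.
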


\begin{proof}
We apply Proposition~\ref{prop:amplification-packet-ell}. For the witness measures
\(\mu_\varepsilon\),
\[
\frac{W_{b,p}\bigl(P_t\mu_\varepsilon,0\bigr)}
{W_{b,p}(\mu_\varepsilon,0)}
\ge
\frac{\alpha^{1/p}}{2}\,\varepsilon^{-(1-1/p)}
\xrightarrow[\varepsilon\downarrow0]{}+\infty.
\]
It suffices to scale by \(m>0\) and apply Remark~\ref{rem:Wbp-homogeneity} to obtain the infinite Lipschitz constant on every total-mass sublevel \(\mathcal M_{\le m}(\Omega)\). For discontinuity at \(0\), we set \(\tau_\varepsilon:=\varepsilon^{-1}\mu_\varepsilon\). We then have
\[
W_{b,p}(\tau_\varepsilon,0)^p\le 2^p\varepsilon^{p-1}\to0,
\qquad
W_{b,p}\bigl(P_t\tau_\varepsilon,0\bigr)^p\ge\alpha.
\]
\end{proof}

\section{\texorpdfstring{$W_{b,1}$}{Wb,1}-Lipschitz regularity at the endpoint \texorpdfstring{$p=1$}{p=1}}\label{sec:Wb1}

In the endpoint argument, we use the following \(p=1\) dual formula, which is the collapsed-boundary specialisation of the relative Kantorovich--Rubinstein duality for metric pairs (\textit{cf}. \cite[Theorem~7.6]{BE24}). We present the proof in Appendix~\ref{app:Wb1-shortcut} to fix the normalisation.

\begin{theorem}[Dual formula for \texorpdfstring{$W_{b,1}$}{Wb,1}]\label{thm:KR-Wb1}
For all $\mu,\nu\in\mathcal M(\Omega)$,
\begin{equation}\label{eq:KR-Wb1}
W_{b,1}(\mu,\nu)
=
\sup\left\{
\int_\Omega \varphi\,d(\mu-\nu):
\ \varphi\in C(\overline\Omega),\
\varphi|_{\partial\Omega}=0,\
\Lip(\varphi)\le 1
\right\}.
\end{equation}
\end{theorem}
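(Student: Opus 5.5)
We prove the two inequalities separately. The easy direction is ``\(\ge\)''. Take any admissible \(\gamma\in\Adm(\mu,\nu)\) and any test function \(\varphi\in C(\overline\Omega)\) with \(\varphi|_{\partial\Omega}=0\) and \(\Lip(\varphi)\le1\). By Remark~\ref{rem:discard-bdrybdry} we may assume \(\gamma(\partial\Omega\times\partial\Omega)=0\). Because \(\varphi\) vanishes on \(\partial\Omega\), the boundary parts of the marginals of \(\gamma\) do not contribute, so
\[
\int_\Omega\varphi\,d(\mu-\nu)=\int_{\overline\Omega\times\overline\Omega}\bigl(\varphi(x)-\varphi(y)\bigr)\,d\gamma(x,y)\le\int|x-y|\,d\gamma .
\]
Taking the infimum over \(\gamma\) and the supremum over \(\varphi\) gives \(W_{b,1}\ge\sup\).

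For ``\(\le\)'' we reduce to classical Kantorovich--Rubinstein duality on the quotient space \(\overline\Omega/\partial\Omega\), where the whole boundary is collapsed to a single point \(\star\). On this quotient we use the shortcut metric
\[
d_b(x,y):=\min\{|x-y|,\ \delta(x)+\delta(y)\},\qquad d_b(x,\star)=\delta(x).
\]
One checks that this is a metric and that \(d_b\le|x-y|\). We then introduce the balanced probability-type measures
\[
\tilde\mu:=\mu+\nu(\Omega)\delta_\star,\qquad \tilde\nu:=\nu+\mu(\Omega)\delta_\star ,
\]
which have equal total mass \(\mu(\Omega)+\nu(\Omega)\) on the compact metric space \((\overline\Omega/\partial\Omega,d_b)\).

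The key step is the identity \(W_{b,1}(\mu,\nu)=W_1^{d_b}(\tilde\mu,\tilde\nu)\); in fact only ``\(\le\)'' is needed. Fix a coupling \(\tilde\gamma\) of \(\tilde\mu\) and \(\tilde\nu\), and build an admissible plan \(\gamma\) with \(\int|x-y|\,d\gamma\le\int d_b\,d\tilde\gamma+\eta\). The plan is constructed as follows.
\begin{itemize}
\item Interior pairs \((x,y)\) with \(|x-y|\le\delta(x)+\delta(y)\) are kept as they are.
\item Every other interior pair is split into the two moves \(x\to\beta_m(x)\) and \(\beta_m(y)\to y\), using the Borel near-projection of Lemma~\ref{lem:approx-boundary-selection}.
\item Pairs \((x,\star)\) become \((x,\beta_m(x))\), and pairs \((\star,y)\) become \((\beta_m(y),y)\).
\item The mass on \((\star,\star)\) is dropped.
\end{itemize}
The interior marginals of the resulting \(\gamma\) are exactly \(\mu\) and \(\nu\). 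Its cost is at most \((1+1/m)\int d_b\,d\tilde\gamma\), and letting \(m\to\infty\) gives the claim. Measurability of the splitting is handled by restricting \(\tilde\gamma\) to the Borel set \(\{|x-y|\le\delta(x)+\delta(y)\}\) and to its complement.

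We then apply classical Kantorovich--Rubinstein duality on the compact space \((\overline\Omega/\partial\Omega,d_b)\). It gives
\[
W_1^{d_b}(\tilde\mu,\tilde\nu)=\sup\int\psi\,d(\tilde\mu-\tilde\nu)
\]
over \(1\)-Lipschitz \(\psi\) with respect to \(d_b\). Since \(\tilde\mu-\tilde\nu\) has zero total mass, we may normalise \(\psi(\star)=0\). Set \(\varphi:=\psi\circ q\) on \(\overline\Omega\), where \(q\) is the quotient map. Then \(\varphi\) is continuous, vanishes on \(\partial\Omega\), and satisfies \(|\varphi(x)-\varphi(y)|\le d_b(x,y)\le|x-y|\), so \(\Lip(\varphi)\le1\). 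Moreover, \(\int\psi\,d(\tilde\mu-\tilde\nu)=\int_\Omega\varphi\,d(\mu-\nu)\), because the atoms at \(\star\) carry \(\psi(\star)=0\). This gives \(W_{b,1}\le\sup\), and combined with the first paragraph the formula follows.

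The main obstacle is the comparison \(W_{b,1}\le W_1^{d_b}(\tilde\mu,\tilde\nu)\): the measurable splitting of plans through the boundary, with only approximate nearest-point selection available. A second point to verify is that the quotient \(\overline\Omega/\partial\Omega\) with \(d_b\) is a compact metric space, so that the standard duality applies. Compactness follows because it is the continuous image of \(\overline\Omega\) under \(q\), and \(q\) is continuous since \(d_b(q(x),q(y))\le|x-y|\).
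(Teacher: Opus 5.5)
Your proof is correct and follows essentially the same route as the paper. Both collapse $\partial\Omega$ to a point with the shortcut metric $\min\{|x-y|,\delta(x)+\delta(y)\}$, balance masses by adding atoms at that point, bound $W_{b,1}$ by the resulting $W_1$ through the same $\beta_m$-splitting of plans, and then apply Kantorovich--Rubinstein duality with the normalisation $\psi(\star)=0$. The only differences are small streamlinings: you get $W_{b,1}\ge\sup$ directly by integrating $\varphi(x)-\varphi(y)$ against an admissible plan, rather than proving the full identity $W_{b,1}=W_{1,d_*}$ and the converse test-function correspondence, and you obtain compactness as the $1$-Lipschitz image of $\overline\Omega$ rather than by the paper's sequential argument.
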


\begin{proof}
See Appendix~\ref{app:Wb1-shortcut}.
\end{proof}

The admissible test class in \eqref{eq:KR-Wb1} is symmetric under
\(\varphi\mapsto-\varphi\). Note that the same formula remains the same, even if we put an absolute value around the integral.

\begin{corollary}[Scaling of the KR inequality]\label{cor:KR-scaling-Wb1}
For all $\mu,\nu\in\mathcal M(\Omega)$ and all $\psi\in C(\overline\Omega)$ with $\psi|_{\partial\Omega}=0$ and $\Lip(\psi)<\infty$,
\[
\left|\int_\Omega \psi\,d(\mu-\nu)\right|
\le \Lip(\psi)\,W_{b,1}(\mu,\nu).
\]
\end{corollary}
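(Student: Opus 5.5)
The corollary follows by normalising \(\psi\) and invoking the dual formula of Theorem~\ref{thm:KR-Wb1}. There are two cases.

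\emph{Case \(\Lip(\psi)=0\).} Then \(\psi\) is constant on each connected component of \(\overline\Omega\) that meets \(\partial\Omega\). Since every point of \(\overline\Omega\) lies in a component that reaches the boundary (by boundedness and Lemma~\ref{lem:components}), the condition \(\psi|_{\partial\Omega}=0\) forces \(\psi\equiv0\). A more direct route avoids the topology: \(\Lip(\psi)=0\) means \(|\psi(x)-\psi(y)|\le 0\) for all \(x,y\), so \(\psi\) is globally constant, hence identically \(0\). In either form, both sides of the inequality vanish.

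\emph{Case \(L:=\Lip(\psi)\in(0,\infty)\).} Set \(\varphi:=\psi/L\). Then \(\varphi\in C(\overline\Omega)\), \(\varphi|_{\partial\Omega}=0\), and \(\Lip(\varphi)=1\), so \(\varphi\) is an admissible test function in \eqref{eq:KR-Wb1}. The integral \(\int_\Omega\varphi\,d(\mu-\nu)\) is well defined because \(\varphi\) is bounded and \(\mu,\nu\) are finite. Theorem~\ref{thm:KR-Wb1} gives \(\int_\Omega\varphi\,d(\mu-\nu)\le W_{b,1}(\mu,\nu)\). The test class is symmetric, so \(-\varphi\) is also admissible, which gives the same bound for \(-\int_\Omega\varphi\,d(\mu-\nu)\), as noted after the theorem. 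Hence \(\bigl|\int_\Omega\varphi\,d(\mu-\nu)\bigr|\le W_{b,1}(\mu,\nu)\). Multiplying by \(L\) and using linearity of the integral yields the claim.

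There is no real obstacle here. The only point to handle is that \(\Lip(\psi)\) may vanish, which rules out dividing by it. That case is dispatched by the observation that a globally \(0\)-Lipschitz function vanishing on \(\partial\Omega\) is identically zero.
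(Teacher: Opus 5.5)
Your proposal is correct and follows the paper's proof essentially verbatim. You dispose of $\Lip(\psi)=0$ by noting that $\psi$ is then globally constant on $\overline\Omega$ and hence zero; your ``direct route'' is exactly the paper's argument, and the component-based detour is unnecessary. Otherwise you normalise to $\psi/\Lip(\psi)$, apply Theorem~\ref{thm:KR-Wb1} to it and to its negative, and rescale, as the paper does.
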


\begin{proof}
If $\Lip(\psi)=0$, then $\psi$ is constant on $\overline\Omega$, so $\psi\equiv0$ because $\psi|_{\partial\Omega}=0$. If $\Lip(\psi)>0$, we set $\phi:=\psi/\Lip(\psi)$.
It follows that $\phi$ and $-\phi$ are allowed test functions in \eqref{eq:KR-Wb1}, so
\[
\left|\int_\Omega \phi\,d(\mu-\nu)\right|\le W_{b,1}(\mu,\nu).
\]
It suffices to multiply by $\Lip(\psi)$ to obtain the claim.
\end{proof}

\begin{lemma}\label{lem:grad-implies-lip-zero-trace}
Let $u\in C(\overline\Omega)\cap C^1(\Omega)$ satisfy $u|_{\partial\Omega}=0$, and set $M:=\|\nabla u\|_{L^\infty(\Omega)}$. This implies that $u$ is Euclidean $M$-Lipschitz on $\overline\Omega$, that is,
\[
\Lip(u)\le M.
\]
\end{lemma}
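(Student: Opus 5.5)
The plan is to prove the bound on the open set \(\Omega\), where \(u\) is \(C^1\) with \(|\nabla u|\le M\), and then pass to \(\overline\Omega\). The difficulty is that \(\Omega\) need not be convex, so the straight segment between two points may leave \(\Omega\). We cannot simply integrate \(\nabla u\) along \([x,y]\). Instead we exploit \(u|_{\partial\Omega}=0\): whenever the segment exits \(\Omega\), the boundary value zero provides an anchor.

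Fix \(x,y\in\overline\Omega\). By continuity of \(u\) on \(\overline\Omega\), it suffices to treat \(x,y\in\Omega\), since points of \(\overline\Omega\) are limits of points of \(\Omega\) and both sides of \(|u(x)-u(y)|\le M|x-y|\) are continuous.

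\textbf{Case 1: \([x,y]\subset\Omega\).} Since \(u\in C^1\) along the segment, the fundamental theorem of calculus gives
\[
u(y)-u(x)=\int_0^1\nabla u(x+s(y-x))\cdot(y-x)\,ds ,
\]
and hence \(|u(x)-u(y)|\le M|x-y|\).

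\textbf{Case 2: \([x,y]\not\subset\Omega\).} Let
\[
s_1:=\inf\{s\in[0,1]:x+s(y-x)\notin\Omega\},\qquad s_2:=\sup\{s\in[0,1]:x+s(y-x)\notin\Omega\}.
\]
Since \(\Omega\) is open and \(x,y\in\Omega\), the points \(z_1:=x+s_1(y-x)\) and \(z_2:=x+s_2(y-x)\) lie in \(\partial\Omega\). Moreover, the half-open segments \([x,z_1)\) and \((z_2,y]\) lie in \(\Omega\).

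Apply the argument of Case 1 on \([x,x+s(y-x)]\) for \(s<s_1\) and let \(s\uparrow s_1\). Continuity of \(u\) on \(\overline\Omega\) then gives
\[
|u(x)|=|u(x)-u(z_1)|\le M|x-z_1|.
\]
Similarly, \(|u(y)|\le M|y-z_2|\). Because \(x,z_1,z_2,y\) occur in this order along the segment,
\[
|u(x)-u(y)|\le |u(x)|+|u(y)|\le M\bigl(|x-z_1|+|z_2-y|\bigr)\le M|x-y|.
\]

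The only delicate point is the limiting argument up to the boundary point \(z_1\), where \(u\) is merely continuous rather than \(C^1\). Integrating on the closed subsegments \([x,x+s(y-x)]\subset\Omega\) and then passing to the limit \(s\uparrow s_1\) handles this, because the bound \(M|x-(x+s(y-x))|\) is uniform in \(s\).

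Combining the two cases with the density step gives \(\Lip(u)\le M\) on \(\overline\Omega\).
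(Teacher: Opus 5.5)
Your proof is correct and follows essentially the paper's argument: split according to whether the segment $[x,y]$ stays in $\Omega$, and otherwise route through the zero boundary value via $|u(x)-u(y)|\le|u(x)|+|u(y)|$. The differences are minor. The paper first proves $|u(x)|\le M\delta(x)$ along a segment to a nearest boundary point, then uses $\delta(x)+\delta(y)\le|x-z|+|z-y|$ for any $z\in[x,y]\cap\partial\Omega$ and treats boundary endpoints directly. You instead integrate up to the first and last exit points of $[x,y]$ itself and reach $\overline\Omega$ by density.
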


\begin{proof}
If $M=+\infty$, the estimate follows immediately. We fix $x\in\Omega$ and choose $\xi\in\partial\Omega$ with $|x-\xi|=\delta(x)$. For $s\in[0,1)$, define $\gamma(s):=x+s(\xi-x)$. If $\gamma(s_0)\notin\Omega$ for some $s_0\in(0,1)$, by continuity of $\gamma$, we obtain $s_*\in(0,s_0]$ with
$\gamma(s_*)\in\partial\Omega$, so
$\delta(x)\le |x-\gamma(s_*)|=s_*|x-\xi|<|x-\xi|=\delta(x)$, which leads to a contradiction. Therefore, $\gamma([0,1))\subset\Omega$ holds, and by the fundamental theorem of calculus, we have
\[
u(\gamma(s))-u(x)=\int_0^s \nabla u(\gamma(r))\cdot(\xi-x)\,dr.
\]
We let $s\uparrow1$ and use $u(\xi)=0$, which gives
\[
|u(x)|\le M\,\delta(x).
\]
This also holds for $x\in\partial\Omega$, where both sides are $0$. We now fix $x,y\in\overline\Omega$, $x\neq y$. If $x,y\in\partial\Omega$, then $u(x)=u(y)=0$.
Let $\Gamma:=\{x+s(y-x): s\in[0,1]\}$. If $\Gamma\subset\Omega$, then
\[
\begin{aligned}
|u(y)-u(x)|
&\le \int_0^1 |\nabla u(x+s(y-x))|\,|x-y|\,ds\\
&\le M|x-y|.
\end{aligned}
\]
If $\Gamma\not\subset\Omega$, we pick an arbitrary $z\in\Gamma\cap\partial\Omega$. It follows that $u(z)=0$, and the pointwise bound above gives
\[
|u(x)-u(y)|\le |u(x)|+|u(y)|\le M(\delta(x)+\delta(y)).
\]
Observe that $z$ lies on the segment, so $|x-y|=|x-z|+|z-y|$ holds, while
$\delta(x)\le |x-z|$ and $\delta(y)\le |y-z|$. Hence $\delta(x)+\delta(y)\le |x-y|$, so, again, $|u(x)-u(y)|\le M|x-y|$. We now take the supremum over $x\neq y$ to obtain $\Lip(u)\le M$.
\end{proof}

\begin{lemma}[Positive-time global parabolic regularity]
\label{lem:positive-time-regularity}
Let \(D\subset\mathbb R^n\) be a bounded \(C^2\) domain and let
\[
\partial_t u-\sum_{i,j}a_{ij}(x)\partial_{ij}u
-\sum_i b_i(x)\partial_i u-c(x)u=F
\]
be uniformly parabolic on \((a,b)\times D\), with smooth bounded
coefficients, zero Dirichlet boundary condition, and a zero initial datum at
\(t=a\). Moreover, let \(u\) be the unique distributional \(L^r\)-solution of this
zero-initial Cauchy--Dirichlet problem. If \(F\in L^r((a,b)\times D)\),
\(1<r<\infty\), then \(u\in W^{2,1}_r((a,b)\times D)\) and satisfies the
standard estimate
\[
\|u\|_{W^{2,1}_r((a,b)\times D)}
\le C\|F\|_{L^r((a,b)\times D)}.
\]
For \(r>n+2\), by the parabolic Sobolev--Morrey embedding, we obtain, for every
\(\theta\in(0,(b-a)/2)\),
\[
\nabla_xu\in C^{\alpha,\alpha/2}
\bigl([a+\theta,b-\theta]\times\overline D\bigr)
\]
for some \(\alpha\in(0,1)\), with the norm controlled by the same data. Notice that the constant depends only on \(D,r,a,b\), the ellipticity constants, the \(C^2\)-atlas of \(\partial D\), and the coefficient norms and moduli of continuity that appear in the global \(W^{2,1}_r\) estimate. In the smooth-coefficient setting below, this means dependence on finitely many \(C^k\)-bounds of the coefficients on \(\overline D\), for \(k\) sufficiently large in the global estimate.
\end{lemma}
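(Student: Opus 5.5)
This is a classical result: the global Calderón--Zygmund theory for parabolic equations in Sobolev spaces. My plan is to reduce it to the standard statement, for instance Ladyzhenskaya--Solonnikov--Ural'tseva, Chapter~IV, Theorem~9.1, or Lieberman's monograph, Chapter~VII. I would not reprove it. The plan has three steps.

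\textbf{Step 1: the $W^{2,1}_r$ estimate.} The zero initial datum at $t=a$ and the zero lateral boundary value satisfy the compatibility conditions trivially. The boundary $\partial D$ is $C^2$, and the coefficients are smooth, so $a_{ij}$ is uniformly continuous and $b_i,c$ are bounded. Under these hypotheses the cited theorem gives existence of a strong solution $\tilde u\in W^{2,1}_r((a,b)\times D)$. It also gives
\[
\|\tilde u\|_{W^{2,1}_r((a,b)\times D)}\le C\|F\|_{L^r((a,b)\times D)},
\]
where $C$ depends on $r$, $b-a$, the ellipticity constants, the moduli of continuity of $a_{ij}$, the bounds on $b_i$ and $c$, and the $C^2$ atlas of $\partial D$.

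\textbf{Step 2: identifying the solution (main obstacle).} I must show that $\tilde u$ coincides with the given distributional $L^r$-solution $u$. Both solve the same zero-initial Cauchy--Dirichlet problem, so it suffices to prove uniqueness of distributional $L^r$-solutions. I would argue by duality. Take $v=u-\tilde u$ and pair it against the solution $\psi$ of the backward adjoint problem with zero final datum and an arbitrary right-hand side $G\in C_c^\infty$. The smooth coefficients allow the adjoint operator to be written in nondivergence form, so Step~1 applies to $\psi$ with exponent $r'$. It gives $\psi\in W^{2,1}_{r'}$, with $\psi$ vanishing on the lateral boundary and at $t=b$. Testing the equation for $v$ with $\psi$ then yields $\int vG=0$, hence $v=0$. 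The delicate point is that $\psi$ is not compactly supported, so it is only an admissible test function after approximation. I expect the paper's notion of solution to be arranged so that this step is part of the definition.

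\textbf{Step 3: Hölder continuity of the gradient.} For $r>n+2$ I apply the anisotropic embedding $W^{2,1}_r\hookrightarrow C^{1+\alpha,(1+\alpha)/2}$ with $\alpha=1-(n+2)/r$ (LSU, Chapter~II, Lemma~3.3). This gives $\nabla_x u\in C^{\alpha,\alpha/2}$ with norm bounded by $C\|u\|_{W^{2,1}_r}$. The embedding constant depends only on $r$ and the cylinder geometry. Restricting to $[a+\theta,b-\theta]\times\overline D$ costs nothing; the embedding in fact holds up to $t=a$. I would record the dependence on $\theta$ only to match the statement.

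Finally, for smooth coefficients, the moduli of continuity and sup bounds are controlled by finitely many $C^k$ norms. This gives the claimed form of the dependence of the constant.
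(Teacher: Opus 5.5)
Your proposal is correct and follows the paper's route. The paper's proof is likewise just a citation of the global $W^{2,1}_r$ Cauchy--Dirichlet theory (Lieberman, Chapter VII), with compatibility automatic for zero data, followed by the parabolic Sobolev--Morrey embedding for $r>n+2$. Your Step 2 does more than is needed: the statement already assumes uniqueness of the distributional $L^r$-solution, and the paper invokes ``standard energy/duality uniqueness'' only where the lemma is applied, so it suffices to note that the strong $W^{2,1}_r$ solution is itself a distributional solution, which follows by integration by parts.
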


\begin{proof}
This is the standard global \(L^r\)-solvability and a priori estimate for
zero-initial, zero-lateral Cauchy--Dirichlet problems on \(C^{1,1}\) cylinders
(for instance, see Lieberman's global \(L^p\) parabolic Cauchy--Dirichlet estimates in
Chapter VII, Sections 7.1--7.4 of \cite{Lie96}). Because a compact \(C^2\)
boundary is locally \(C^{1,1}\), all the hypotheses of these estimates are satisfied after boundary flattening. The time cut-offs we apply below vanish near the initial time and on the lateral boundary, which means that the necessary compatibility conditions hold automatically. The Morrey conclusion follows directly from the parabolic Sobolev--Morrey embedding for \(W^{2,1}_r\) with \(r>n+2\).
\end{proof}

We now analyse the fixed-time gradient estimate that will be needed for the endpoint \(p=1\). We include a concise \(C^2\)-domain proof needed here, which is based on standard global parabolic \(W^{2,1}_r\)-regularity. \cite{Wang04} describes related uniform estimates for Dirichlet heat semigroups obtained by probabilistic methods and under geometric hypotheses.

\begin{theorem}[Fixed-time Dirichlet gradient estimate]\label{thm:dirichlet-grad}
Let \(\Omega\subset\mathbb R^n\) be a nonempty bounded open set with \(C^2\)
boundary, and let \((P_t)_{t\ge0}\) be the homogeneous Dirichlet heat semigroup.
For every \(t_0>0\), there exists \(C_{t_0,\Omega}<\infty\) such that
\[
\|\nabla P_{t_0} f\|_{L^\infty(\Omega)}
\le C_{t_0,\Omega}\|f\|_{L^\infty(\Omega)}
\qquad\forall f\in L^\infty(\Omega).
\]
\end{theorem}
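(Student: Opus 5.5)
The plan is to split the semigroup at an intermediate time and use the global parabolic estimate of Lemma~\ref{lem:positive-time-regularity} on a time-localised problem. Since \(\Omega\) has finitely many components with pairwise disjoint closures (Lemma~\ref{lem:components}) and \(P_t\) acts componentwise, we may work on a single bounded \(C^2\) domain \(D=\Omega_j\) and take the maximum of the finitely many constants at the end.

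\textbf{Step 1: reduce to \(L^r\) data.} Given \(f\in L^\infty(D)\), set \(u(t)=P_tf\). By \(L^\infty\)-contractivity (sub-Markovian property), \(\|u(t)\|_{L^\infty}\le\|f\|_{L^\infty}\) for all \(t\), hence \(\|u\|_{L^r((0,t_0)\times D)}\le (t_0|D|)^{1/r}\|f\|_{L^\infty}\) for any \(r\).

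\textbf{Step 2: time cut-off.} Fix \(\chi\in C^\infty(\mathbb R)\) with \(\chi=0\) on \((-\infty,t_0/4]\) and \(\chi=1\) on \([t_0/2,\infty)\), and set \(v:=\chi(t)u\) on \((0,2t_0)\times D\). Then \(v\) solves \(\partial_tv-\Delta v=\chi'(t)u=:F\) with zero lateral Dirichlet data and zero initial datum at \(t=0\), and \(\|F\|_{L^r}\le C(t_0)\|u\|_{L^r}\). To justify that \(v\) is the distributional \(L^r\)-solution to which the lemma applies, first take \(f\in C_c^\infty(D)\), where \(u\) is smooth up to the boundary for positive times, and then use density and uniqueness.

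\textbf{Step 3: apply the estimate.} Choose \(r>n+2\) and apply Lemma~\ref{lem:positive-time-regularity} with \(a=0\), \(b=2t_0\), and \(\theta=t_0/2\) (so that \(t_0\in[\theta,b-\theta]\)). This gives \(\sup_{\overline D}|\nabla v(t_0)|\le C\|F\|_{L^r}\le C_{t_0,D}\|f\|_{L^\infty}\). Since \(v(t_0)=u(t_0)\), the desired bound holds for smooth compactly supported \(f\).

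\textbf{Step 4: extend to general \(f\in L^\infty\) (expected main obstacle).} Approximate \(f\) by \(f_k\in C_c^\infty(D)\) with \(\|f_k\|_\infty\le\|f\|_\infty\) and \(f_k\to f\) in \(L^1\) (hence a.e.). Then \(P_{t_0}f_k\to P_{t_0}f\) in \(L^1\). The uniform gradient bound makes the sequence equi-Lipschitz on each convex subset, so by Arzel\`a--Ascoli it converges locally uniformly, and the gradients converge weak-\(*\) in \(L^\infty\). Lower semicontinuity of the \(L^\infty\) norm under weak-\(*\) convergence then yields the estimate for \(f\). The remaining technical work is checking that the cut-off solution matches the hypotheses of the cited \(W^{2,1}_r\) theory: uniqueness, compatibility at \(t=0\) (automatic here since \(\chi\) vanishes near \(0\)), and \(C^{1,1}\) flattening.
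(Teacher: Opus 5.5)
Your proposal is correct and follows essentially the paper's argument: work componentwise, apply the global $W^{2,1}_r$ estimate of Lemma~\ref{lem:positive-time-regularity} to a time cut-off of $P_tf$ whose forcing $\chi' P_tf$ is controlled in $L^r$ by $L^\infty$-contractivity, use the Morrey embedding with $r>n+2$, and pass from $C_c^\infty$ data to $L^\infty$ data by weak-$*$ compactness of the gradients and lower semicontinuity. The differences are cosmetic: you use a one-sided cut-off on $(0,2t_0)$ rather than a window around $t_0$, and $L^1$ rather than $L^r$ approximation. One small correction: for a merely $C^2$ boundary, $P_tf$ with $f\in C_c^\infty(D)$ is not smooth up to $\partial D$, only $W^{2,1}_r$-regular, but that is all your Step 2 actually needs.
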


\begin{proof}
We refer to the standard global \(W^{2,1}_r\) Cauchy--Dirichlet estimate on
bounded \(C^2\) cylinders, as in Lieberman's parabolic boundary regularity
theory \cite{Lie96}. By Lemma~\ref{lem:components}, it is enough to prove the estimate on each connected component and then take the maximum of the new constants. To this end, we assume first that \(\Omega\) is connected. We fix \(t_0>0\) and set, for definiteness, \(\tau:=t_0/4\). We also choose
\(\chi\in C_c^\infty((t_0-\tau,t_0+\tau))\) such that \(0\le\chi\le1\) and
\[
\chi\equiv1
\qquad\text{on }[t_0-\tau/2,t_0+\tau/2].
\]
In this step, we extend \(f\) by \(0\) outside \(\Omega\), and choose cutoffs
\(\chi_k\in C_c^\infty(\Omega)\) with \(0\le\chi_k\le1\) and
\(\chi_k\to1\) a.e. in \(\Omega\). After mollification of \(\chi_k f\) and truncation,
we obtain \(f_k\in C_c^\infty(\Omega)\) such that
\[
\|f_k\|_{L^\infty}\le \|f\|_{L^\infty}+k^{-1},
\qquad
f_k\to f \quad\text{in }L^r(\Omega)
\]
for every finite \(r\). It is enough to establish the estimate first for smooth
data, where the constants are independent of the datum, and then pass to the limit. Let us assume for now that \(f\in C_c^\infty(\Omega)\). Let
\(w(t,x):=P_t f(x)\) and set
\[
u(t,x):=\chi(t)w(t,x).
\]
It follows that \(u\) is a distributional solution on
\[
Q:=(t_0-\tau,t_0+\tau)\times\Omega
\]
of the homogeneous Dirichlet problem
\[
\partial_t u-\Delta u=(\partial_t\chi)w,
\qquad
u|_{\partial\Omega}=0,
\qquad
u(t_0-\tau,\cdot)=0.
\]
Since \(P_t\) is an \(L^\infty\)-contraction,
\[
\|w\|_{L^\infty(Q)}\le \|f\|_{L^\infty(\Omega)}.
\]
Hence, for every \(1<r<\infty\),
\[
\|(\partial_t\chi)w\|_{L^r(Q)}
\le C(t_0,\tau,r,\Omega)\|f\|_{L^\infty(\Omega)}.
\]
Because the time cut-off \(\chi\) is compactly supported in
\((t_0-\tau,t_0+\tau)\), the zero initial condition at \(t=t_0-\tau\) and the
boundary compatibility conditions hold automatically. By Lemma~\ref{lem:positive-time-regularity}, which we apply to the time-cutoff solution with \(F=(\partial_t\chi)w\), and by the standard energy/duality uniqueness for the zero-initial Cauchy--Dirichlet problem in \(L^r\), we obtain
\[
\|u\|_{W^{2,1}_r(Q)}
\le C(t_0,\tau,r,\Omega)\|f\|_{L^\infty(\Omega)}.
\]
We choose \(r>n+2\). By the parabolic Morrey embedding for the
\(W^{2,1}_r\)-scale we have, on the smaller slab,
\[
\nabla_x u\in C^{\alpha,\alpha/2}
\bigl([t_0-\tau/2,t_0+\tau/2]\times\overline\Omega\bigr)
\]
for some \(\alpha\in(0,1)\), and, in particular,
\[
\|\nabla_x u\|_{L^\infty(
[t_0-\tau/2,t_0+\tau/2]\times\Omega)}
\le C\,\|u\|_{W^{2,1}_r(Q)}
\le C(t_0,\Omega)\|f\|_{L^\infty(\Omega)}.
\]
Because \(u=w\) on the smaller slab, this gives
\[
\|\nabla P_{t_0}f\|_{L^\infty(\Omega)}
\le C(t_0,\Omega)\|f\|_{L^\infty(\Omega)}.
\]
For general \(f\in L^\infty(\Omega)\), we apply the previous estimate to the
sequence \(f_k\). Given that the Dirichlet heat semigroup is sub-Markovian, it is
contractive on \(L^r(\Omega)\) for every \(1\le r\le\infty\). It follows that
\[
P_{t_0}f_k\to P_{t_0}f
\qquad\text{in }L^r(\Omega)
\]
for every finite \(r\). Note that the gradients \(\nabla P_{t_0}f_k\) are bounded in
\(L^\infty(\Omega;\mathbb R^n)\), so, after we pass to a subsequence,
converge weakly-* in \(L^\infty(\Omega;\mathbb R^n)\) to some vector field
\(G\). Since \(P_{t_0}f_k\to P_{t_0}f\) in \(L^r(\Omega)\), the distributional identity \(G=\nabla P_{t_0}f\) follows by testing against \(C_c^\infty(\Omega)\). We conclude that \(P_{t_0}f\in W^{1,\infty}(\Omega)\), and by weak-* lower semicontinuity, we have
\[
\|\nabla P_{t_0}f\|_{L^\infty(\Omega)}
\le C(t_0,\Omega)\|f\|_{L^\infty(\Omega)}.
\]
Initially, this estimate is an essential \(L^\infty\)-bound. Since the Dirichlet heat semigroup has an interior \(C^\infty\)-representative for
\(t_0>0\), the distributional gradient coincides in \(\Omega\) with the classical
gradient of this representative. Moreover, the classical gradient is continuous on compact subsets of \(\Omega\), so the essential bound extends pointwise in \(\Omega\). Otherwise, continuity at a point of excess would contradict the essential
\(L^\infty\)-bound.

For a general bounded \(C^2\) open set, we apply the previous argument on each
connected component \(\Omega_j\). Given that there are only finitely many components by Lemma~\ref{lem:components}, it suffices to take the maximum of the componentwise constants to obtain the desired estimate.
\end{proof}

\begin{proposition}[Propagation of boundary Lipschitz test functions]\label{prop:Pt-test-lip}
For every \(t>0\), there exists \(L_{t,\Omega}<\infty\) such that, whenever \(\varphi\in C(\overline\Omega)\) satisfies \(\varphi|_{\partial\Omega}=0\) and \(\Lip(\varphi)\le1\), it holds that
\[
P_t\varphi\in C_{\partial\Omega}(\overline\Omega)\cap C^1(\Omega),
\qquad
\Lip(P_t\varphi)\le L_{t,\Omega}.
\]
\end{proposition}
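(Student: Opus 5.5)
The plan combines the fixed-time gradient estimate of Theorem~\ref{thm:dirichlet-grad} with Lemma~\ref{lem:grad-implies-lip-zero-trace}, after controlling \(\|\varphi\|_{L^\infty}\) by the Lipschitz bound and the zero trace.

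\emph{Step 1: sup-norm of the test function.} Let \(\varphi\in C(\overline\Omega)\) with \(\varphi|_{\partial\Omega}=0\) and \(\Lip(\varphi)\le1\). For \(x\in\Omega\), pick \(\xi\in\partial\Omega\) with \(|x-\xi|=\delta(x)\). Then \(|\varphi(x)|=|\varphi(x)-\varphi(\xi)|\le\delta(x)\le D_\Omega\), so
\[
\|\varphi\|_{L^\infty(\Omega)}\le D_\Omega .
\]

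\emph{Step 2: regularity and boundary values of \(P_t\varphi\).} Since \(\varphi\in C_{\partial\Omega}(\overline\Omega)\cong C_0(\Omega)\), and \(P_t\) acts on \(C_0(\Omega)\) as in Definition~\ref{def:Pt-on-measures}, we get \(P_t\varphi\in C_{\partial\Omega}(\overline\Omega)\). Here we use that the \(C_0\)-semigroup and the \(L^\infty\) (or \(L^2\)) semigroup agree on \(C_0\cap L^\infty\), which is standard consistency of the sub-Markovian Dirichlet semigroup. Interior parabolic smoothing then gives \(P_t\varphi\in C^\infty(\Omega)\subset C^1(\Omega)\); this is also used in the last paragraph of the proof of Theorem~\ref{thm:dirichlet-grad}.

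\emph{Step 3: gradient and Lipschitz bound.} Theorem~\ref{thm:dirichlet-grad} with \(f=\varphi\) gives
\[
\|\nabla P_t\varphi\|_{L^\infty(\Omega)}\le C_{t,\Omega}\|\varphi\|_{L^\infty(\Omega)}\le C_{t,\Omega}D_\Omega .
\]
By the closing remark of that proof, this is a pointwise bound on the classical gradient in \(\Omega\). Lemma~\ref{lem:grad-implies-lip-zero-trace} applies to \(u=P_t\varphi\in C(\overline\Omega)\cap C^1(\Omega)\) with zero trace. It yields \(\Lip(P_t\varphi)\le C_{t,\Omega}D_\Omega\), so we set
\[
L_{t,\Omega}:=C_{t,\Omega}D_\Omega .
\]
Note that \(L_{t,\Omega}\) does not depend on \(\varphi\).

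The main obstacle is Step~2, namely the boundary continuity of \(P_t\varphi\). Theorem~\ref{thm:dirichlet-grad} only provides an \(L^\infty\) gradient bound, and Lemma~\ref{lem:grad-implies-lip-zero-trace} needs a genuine continuous extension that vanishes on \(\partial\Omega\). If the identification of the \(C_0\)-semigroup with the \(L^\infty\)-semigroup is not taken as given, there is a fallback. Approximate \(\varphi\) uniformly by \(\varphi_k\in C_c^\infty(\Omega)\), for which \(P_t\varphi_k\) is classical and continuous up to the boundary with zero trace. By \(C_0\)-contractivity, \(P_t\varphi_k\to P_t\varphi\) uniformly on \(\overline\Omega\). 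The uniform gradient bound \(C_{t,\Omega}(D_\Omega+o(1))\) makes each \(P_t\varphi_k\) uniformly Lipschitz by Lemma~\ref{lem:grad-implies-lip-zero-trace}. Lipschitz bounds pass to uniform limits, which gives the claim directly.
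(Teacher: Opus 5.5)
Your proposal is correct and follows the paper's proof essentially step for step: bound \(\|\varphi\|_{L^\infty}\) via the zero trace and \(\Lip(\varphi)\le1\), cite the \(C_0(\Omega)\)-realisation and interior smoothing for \(P_t\varphi\in C_{\partial\Omega}(\overline\Omega)\cap C^1(\Omega)\), then apply Theorem~\ref{thm:dirichlet-grad} and Lemma~\ref{lem:grad-implies-lip-zero-trace}. The only difference is cosmetic: the paper uses \(R_\Omega:=\sup_{x\in\Omega}\delta(x)\) and sets \(L_{t,\Omega}=C_{t,\Omega}R_\Omega\), while your \(D_\Omega\ge R_\Omega\) gives a slightly larger but equally valid constant.
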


\begin{proof}
Let \(C_{t,\Omega}\) be as in Theorem~\ref{thm:dirichlet-grad}, and set
\[
R_\Omega:=\sup_{x\in\Omega}\delta(x)<\infty,
\qquad
L_{t,\Omega}:=C_{t,\Omega}R_\Omega.
\]
If \(\varphi|_{\partial\Omega}=0\) and \(\Lip(\varphi)\le1\), then
\[
|\varphi(x)|\le \delta(x)\le R_\Omega
\qquad\forall x\in\Omega,
\]
so \(\|\varphi\|_{L^\infty(\Omega)}\le R_\Omega\). Since \(P_t\) is the killed heat semigroup on \(C_0(\Omega)\), we have \(P_t\varphi\in C_{\partial\Omega}(\overline\Omega)\). Moreover, for every \(t>0\), by the standard positive-time interior smoothing for the Dirichlet heat semigroup, we obtain \(P_t\varphi\in C^\infty(\Omega)\). The boundary-continuous representative lies in
\(C_{\partial\Omega}(\overline\Omega)\) by the \(C_0(\Omega)\)-semigroup
realisation. Finally, by Theorem~\ref{thm:dirichlet-grad}, we have
\[
\|\nabla P_t\varphi\|_{L^\infty(\Omega)}
\le C_{t,\Omega}\|\varphi\|_{L^\infty(\Omega)}
\le L_{t,\Omega}.
\]
We conclude that \(P_t\varphi\in C^1(\Omega)\), and Lemma~\ref{lem:grad-implies-lip-zero-trace} gives
\[
\Lip(P_t\varphi)\le L_{t,\Omega}.
\]
\end{proof}

\begin{theorem}[Endpoint \texorpdfstring{$p=1$}{p=1}]\label{thm:p1-finite-lip}
For every $t>0$, there exists $L_{t,\Omega}\in(0,\infty)$ such that
\[
W_{b,1}(P_t\mu,P_t\nu)\le L_{t,\Omega}\,W_{b,1}(\mu,\nu)
\qquad \forall \mu,\nu\in\mathcal M(\Omega).
\]
In particular,
\[
\Lip_{W_{b,1}}(P_t)<\infty.
\]
\end{theorem}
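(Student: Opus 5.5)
The plan is to combine the Kantorovich--Rubinstein dual formula of Theorem~\ref{thm:KR-Wb1} with the propagation estimate of Proposition~\ref{prop:Pt-test-lip}.

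We fix \(t>0\) and \(\mu,\nu\in\mathcal M(\Omega)\). Take any admissible test function \(\varphi\in C(\overline\Omega)\) with \(\varphi|_{\partial\Omega}=0\) and \(\Lip(\varphi)\le1\). Since \(\varphi\in C_{\partial\Omega}(\overline\Omega)\), Definition~\ref{def:Pt-on-measures} gives the duality identity
\[
\int_\Omega \varphi\,d(P_t\mu-P_t\nu)=\int_\Omega P_t\varphi\,d(\mu-\nu).
\]
This uses linearity of the dual action on the difference, which is clear because both measures are finite.

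Next we apply Proposition~\ref{prop:Pt-test-lip}. It shows that \(\psi:=P_t\varphi\) lies in \(C_{\partial\Omega}(\overline\Omega)\), so it vanishes on \(\partial\Omega\), and that \(\Lip(\psi)\le L_{t,\Omega}\). Corollary~\ref{cor:KR-scaling-Wb1} applied to \(\psi\) then yields
\[
\Bigl|\int_\Omega \psi\,d(\mu-\nu)\Bigr|\le L_{t,\Omega}W_{b,1}(\mu,\nu).
\]
Taking the supremum over \(\varphi\) in \eqref{eq:KR-Wb1}, with the pair \((P_t\mu,P_t\nu)\) in place of \((\mu,\nu)\), gives \(W_{b,1}(P_t\mu,P_t\nu)\le L_{t,\Omega}W_{b,1}(\mu,\nu)\).

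The constant \(L_{t,\Omega}=C_{t,\Omega}R_\Omega\) might be zero only in degenerate cases. Since the statement asks for \(L_{t,\Omega}\in(0,\infty)\), we replace it by \(\max\{L_{t,\Omega},1\}\) if necessary. The bound \(\Lip_{W_{b,1}}(P_t)<\infty\) then follows directly from the definition.

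The only delicate point is that the duality in Definition~\ref{def:Pt-on-measures} is stated for \(\varphi\in C_{\partial\Omega}(\overline\Omega)\), and the \(\varphi\) here are exactly of that type. The analytic difficulty, the uniform gradient bound, is already contained in Theorem~\ref{thm:dirichlet-grad}. I therefore expect no real obstacle beyond checking that \(P_t\varphi\) satisfies the hypotheses of Corollary~\ref{cor:KR-scaling-Wb1}, namely continuity up to \(\overline\Omega\), zero trace, and finite Lipschitz constant. All three are provided by Proposition~\ref{prop:Pt-test-lip}.
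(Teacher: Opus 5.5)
Your proposal is correct and follows essentially the same route as the paper: the dual formula of Theorem~\ref{thm:KR-Wb1}, the defining duality of Definition~\ref{def:Pt-on-measures}, the Lipschitz propagation of Proposition~\ref{prop:Pt-test-lip}, and then Corollary~\ref{cor:KR-scaling-Wb1} before taking the supremum. Your replacement of the constant by \(\max\{L_{t,\Omega},1\}\) is a harmless extra detail that secures \(L_{t,\Omega}>0\); the paper does not state this step explicitly.
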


\begin{proof}[Proof of Theorem~\ref{thm:p1-finite-lip}]
Let \(L_{t,\Omega}\) be as in Proposition~\ref{prop:Pt-test-lip}. By Theorem~\ref{thm:KR-Wb1},
\[
W_{b,1}(P_t\mu,P_t\nu)
=
\sup_{\substack{\varphi\in C(\overline\Omega)\\ \varphi|_{\partial\Omega}=0,\ \Lip(\varphi)\le1}}
\int_\Omega \varphi\,d(P_t\mu-P_t\nu).
\]
For every such \(\varphi\), Definition~\ref{def:Pt-on-measures} gives
\[
\int_\Omega \varphi\,d(P_t\mu-P_t\nu)=\int_\Omega (P_t\varphi)\,d(\mu-\nu).
\]
By Proposition~\ref{prop:Pt-test-lip}, the function \(P_t\varphi\) vanishes on \(\partial\Omega\) and satisfies \(\Lip(P_t\varphi)\le L_{t,\Omega}\). By Corollary~\ref{cor:KR-scaling-Wb1}, we have
\[
\left|\int_\Omega (P_t\varphi)\,d(\mu-\nu)\right|
\le L_{t,\Omega}\,W_{b,1}(\mu,\nu).
\]
Finally, we take the supremum over \(\varphi\) to obtain the estimate.
\end{proof}

\begin{proof}[Proof of Theorem~\ref{thm:sharp-modulus}]
The \(p=1\) statement is precisely Theorem~\ref{thm:p1-finite-lip}. Let \(p>1\), \(m>0\), and \(\mu,\nu\in\mathcal M_{\le m}(\Omega)\). By Lemma~\ref{lem:Wb1-Wbp-comparison}, Theorem~\ref{thm:p1-finite-lip}, and, again, Lemma~\ref{lem:Wb1-Wbp-comparison},
\[
\begin{aligned}
W_{b,p}(P_t\mu,P_t\nu)^p
&\le D_\Omega^{p-1} W_{b,1}(P_t\mu,P_t\nu)\\
&\le D_\Omega^{p-1} L_{t,\Omega} W_{b,1}(\mu,\nu)\\
&\le D_\Omega^{p-1} L_{t,\Omega}(2m)^{1-1/p}
      W_{b,p}(\mu,\nu).
\end{aligned}
\]
It follows that the \(1/p\)-H\"older estimate holds with
\[
C_{t,p,m,\Omega}
=
D_\Omega^{p-1}L_{t,\Omega}(2m)^{1-1/p}.
\]
Sharpness follows directly from the boundary-layer packets of Proposition~\ref{prop:amplification-packet-ell} for the Dirichlet Laplacian.
Let \(\mu_\varepsilon\) be the packets with unit mass and write \(\alpha>0\) for the lower-bound constant in that proposition. We also fix \(a\in(0,m]\). It follows that
\[
W_{b,p}(a\mu_\varepsilon,0)
\le 2a^{1/p}\varepsilon,
\]
whereas
\[
W_{b,p}(P_t(a\mu_\varepsilon),0)
=
a^{1/p}W_{b,p}(P_t\mu_\varepsilon,0)
\ge a^{1/p}\alpha^{1/p}\varepsilon^{1/p}.
\]
This implies that for every \(\alpha_0>1/p\),
\[
\frac{W_{b,p}(P_t(a\mu_\varepsilon),0)}
     {W_{b,p}(a\mu_\varepsilon,0)^{\alpha_0}}
\ge
\frac{a^{(1-\alpha_0)/p}\alpha^{1/p}}{2^{\alpha_0}}
\varepsilon^{1/p-\alpha_0}
\to+\infty
\qquad(\varepsilon\downarrow0).
\]
This proves optimality of the exponent \(1/p\) within the scale of power
moduli. Note that the Lipschitz failure is the special case \(\alpha_0=1\). Finally, we obtain the discontinuity at \(0\) on the full finite-measure space by setting \(\tau_\varepsilon:=\varepsilon^{-1}\mu_\varepsilon\), exactly as in
Proposition~\ref{prop:Delta-lower-consequences}.
\end{proof}

\begin{remark}\label{rem:p1-convex-ps}
In the convex Euclidean context, it is possible to sharpen the endpoint result to a
contraction. Under the hypotheses of \cite[Theorem~1.19]{PS20},
\cite[Corollary~1.24]{PS20} gives the contraction for the Profeta--Sturm
Dirichlet metrics. At \(p=1\), their results give
\[
W_1^\flat=W_1^\sharp=W'_1,
\qquad
W'_1=W''_1,
\]
and \(W''_1\) is the boundary-reservoir metric \(W_{b,1}\) in the
normalisation we use here. In the convex settings considered by \cite{PS20},
\[
W_{b,1}(P_t\mu,P_t\nu)\le W_{b,1}(\mu,\nu)
\]
for the homogeneous Dirichlet heat flow on subprobabilities. By the positive
homogeneity of \(W_{b,1}\) and the linearity of \(P_t\), the same estimate
extends to arbitrary finite nonnegative Radon measures after rescaling. The relevance of Theorem~\ref{thm:p1-finite-lip} is that it could be applied on arbitrary
bounded \(C^2\) domains, without any convexity or curvature assumptions.
\end{remark}

We now analyse the affine endpoint \(p=1\).

\begin{corollary}[Global Lipschitz bound for the affine Dirichlet flow at \texorpdfstring{$p=1$}{p=1}]\label{cor:p1-affine}
We fix $c\ge0$ and $t>0$. There exists $L_{t,\Omega}\in(0,\infty)$ such that
\[
W_{b,1}\bigl((S_t\rho)\,dx,(S_t\sigma)\,dx\bigr)
\le L_{t,\Omega}\,W_{b,1}(\rho\,dx,\sigma\,dx)
\qquad \forall \rho,\sigma\in\mathcal D_c.
\]
In particular,
\[
\Lip_{W_{b,1}}\!\left(S_t;\mathcal X_c\right)<\infty.
\]
\end{corollary}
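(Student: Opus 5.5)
The idea is to reduce the affine statement to the linear one, Theorem~\ref{thm:p1-finite-lip}, using the fact that $W_{b,1}$ depends only on the difference $\mu-\nu$. Fix $\rho,\sigma\in\mathcal D_c$. By \eqref{eq:def-St},
\[
S_t\rho-S_t\sigma=P_t(\rho-c)-P_t(\sigma-c)=P_t(\rho-\sigma),
\]
where $\rho-\sigma\in C^\infty(\overline\Omega)$ vanishes near $\partial\Omega$. Here $\rho-c$ and $\sigma-c$ lie in $L^1(\Omega)$ because $\Omega$ is bounded, so linearity of $P_t$ on $L^1$ applies.

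To pass to measures, write $\rho-\sigma=f_+-f_-$ with $f_\pm\ge0$ in $L^1(\Omega)$, and set $\mu:=f_+\,dx$, $\nu:=f_-\,dx$. Theorem~\ref{thm:KR-Wb1} shows that $W_{b,1}$ between two measures depends only on their signed difference, since the dual formula involves only $\int\varphi\,d(\mu-\nu)$. Both $S_t\rho$ and $S_t\sigma$ are nonnegative: $S_t\rho=c+P_t(\rho-c)$ solves the Dirichlet problem \eqref{eq:dirichlet} with nonnegative data and boundary value $c\ge0$, so the maximum principle applies. Moreover, $(S_t\rho)\,dx$ is a finite measure because $S_t\rho$ is bounded. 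Therefore
\[
W_{b,1}\bigl((S_t\rho)\,dx,(S_t\sigma)\,dx\bigr)=\sup_\varphi\int_\Omega\varphi\,P_t(\rho-\sigma)\,dx .
\]

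Next I transfer $P_t$ onto $\varphi$. By Lemma~\ref{lem:duality-L1-C0},
\[
\int_\Omega\varphi\,P_t(\rho-\sigma)\,dx=\int_\Omega (P_t\varphi)(\rho-\sigma)\,dx .
\]
By Proposition~\ref{prop:Pt-test-lip}, $P_t\varphi$ vanishes on $\partial\Omega$ and satisfies $\Lip(P_t\varphi)\le L_{t,\Omega}$. Corollary~\ref{cor:KR-scaling-Wb1}, applied to $\rho\,dx$ and $\sigma\,dx$ (both in $\mathcal M(\Omega)$), bounds the right-hand side by $L_{t,\Omega}W_{b,1}(\rho\,dx,\sigma\,dx)$. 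Taking the supremum over $\varphi$ gives the claim with the same $L_{t,\Omega}$ as in Theorem~\ref{thm:p1-finite-lip}.

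The only delicate point is the bookkeeping showing that the constant $c$ cancels. The measures $(S_t\rho)\,dx$ and $(S_t\sigma)\,dx$ each carry mass of order $c|\Omega|$ that does not come from the linear flow, but only their difference $P_t(\rho-\sigma)\,dx$ enters the dual formula. One must also check that $\Lip_{W_{b,1}}(S_t;\mathcal X_c)$ is well defined as a map into $\mathcal M(\Omega)$, which is exactly the nonnegativity and finite-mass verification above.
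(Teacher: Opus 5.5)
Your proof is correct and follows the paper's argument. The paper also uses Lemma~\ref{lem:duality-L1-C0} to rewrite $\int_\Omega\varphi\,d\bigl((S_t\rho)\,dx-(S_t\sigma)\,dx\bigr)$ as $\int_\Omega(P_t\varphi)(\rho-\sigma)\,dx$, and then reruns the proof of Theorem~\ref{thm:p1-finite-lip} via Proposition~\ref{prop:Pt-test-lip} and Corollary~\ref{cor:KR-scaling-Wb1} with the same constant. Your nonnegativity and finite-mass check is a detail the paper leaves implicit. Your decomposition $\rho-\sigma=f_+-f_-$ is never actually used, though it would give an even shorter route: since $W_{b,1}$ depends only on $\mu-\nu$, you could apply Theorem~\ref{thm:p1-finite-lip} directly to $f_+\,dx$ and $f_-\,dx$.
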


\begin{proof}
Let \(L_{t,\Omega}\) be as in Proposition~\ref{prop:Pt-test-lip}.
For \(\rho,\sigma\in\mathcal D_c\) and \(\varphi\in C(\overline\Omega)\) with \(\varphi|_{\partial\Omega}=0\), Lemma~\ref{lem:duality-L1-C0} gives
\[
\int_\Omega \varphi\,d\bigl((S_t\rho)\,dx-(S_t\sigma)\,dx\bigr)
=
\int_\Omega (P_t\varphi)(\rho-\sigma)\,dx.
\]
We may now apply the proof of Theorem~\ref{thm:p1-finite-lip} verbatim, where we replace Definition~\ref{def:Pt-on-measures} by this identity. This gives
\[
W_{b,1}\bigl((S_t\rho)\,dx,(S_t\sigma)\,dx\bigr)
\le L_{t,\Omega}\,W_{b,1}(\rho\,dx,\sigma\,dx).
\]
\end{proof}

\section{Uniformly elliptic robustness}\label{sec:ell-robustness}

\subsection{Uniformly elliptic Dirichlet semigroups and duality}
In the proof of Theorem~\ref{thm:sharp-modulus}, we use the specialisation
\(\mathcal L=\Delta\). The statements below concern lower-bound and instability results. We prove the exact matching \(1/p\)-H\"older upper modulus for the killed Dirichlet heat semigroup of the Laplacian, where the \(W_{b,1}\) endpoint estimate is available. In the extension from the Laplacian to a general uniformly elliptic operator, we use two results, namely, a Hopf-type lower bound for the adjoint flow and a duality identity.

Let \(\mathcal L\) be a time-independent second-order uniformly elliptic
operator on \(\Omega\) of the form
\begin{equation}\label{eq:ell-operator}
\mathcal L u := \nabla\cdot(A(x)\nabla u) + b(x)\cdot\nabla u,
\end{equation}
where \(A\in C^\infty(\overline\Omega;\mathbb R^{n\times n})\) and
\(b\in C^\infty(\overline\Omega;\mathbb R^n)\). We do not assume that the matrix \(A\) is symmetric. The forward operator does not contain a zero-order term. Indeed, zero-order terms appear only when the formal adjoint is rewritten in divergence form. We use this convention below in the affine constant-boundary subsection, where constants must be stationary. We write
\[
A_s:=\frac{A+A^{\mathsf T}}2,
\]
and assume that there exist constants \(0<\lambda\le \Lambda<\infty\) such that
\[
\lambda|\xi|^2\le \xi\cdot A_s(x)\xi\le \Lambda|\xi|^2,
\qquad
\|A(x)\|\le \Lambda,
\forall \xi\in\mathbb R^n,\ \forall x\in\overline\Omega.
\]
Let $(P_t^{\mathcal L})_{t\ge0}$ denote the homogeneous Dirichlet semigroup. For $\eta\in C^\infty(\overline\Omega)$ with $\eta|_{\partial\Omega}=0$,
$w(t,\cdot)=P_t^{\mathcal L}\eta$ is the classical solution of
\[
\begin{aligned}
\partial_t w&=\mathcal L w &&\text{in }\Omega\times(0,\infty),\\
w&=0 &&\text{on }\partial\Omega\times(0,\infty),\\
w(0,\cdot)&=\eta.
\end{aligned}
\]
We use the standard Dirichlet realisation of \(\mathcal L\), first on
\(L^2(\Omega)\) and then on \(L^1(\Omega)\) through the standard positive
extrapolated semigroup, with at most exponential growth (see, for
instance, \cite[Chapters~1--2 and~4]{Ouh05}). We invoke selected parabolic regularity statements from \cite[Chapter~7]{Lie96}. We denote the positive
strongly continuous semigroup by \((P_t^{\mathcal L})_{t\ge0}\).

In the uniformly elliptic part, we use this realisation through positivity on the nonnegative \(L^1\)-cone, \(L^1\)-boundedness on compact time intervals, the \(L^2\)-adjoint identity of Lemma~\ref{lem:duality-ell} for compactly supported smooth test functions, and the positive-time parabolic regularity given in
Lemma~\ref{lem:positive-time-regularity}. Whenever \(\mu=\rho\,dx\) with
\(\rho\in L^1(\Omega)\), we set
\[
P_t^{\mathcal L}\mu:=(P_t^{\mathcal L}\rho)\,dx.
\]
All statements which involve \(P_t^{\mathcal L}\) below are statements about
the density-induced map on the nonnegative \(L^1\)-cone. The formal adjoint of $\mathcal L$ (with respect to Lebesgue measure) is
\[
\begin{aligned}
\mathcal L^* v
&:=\nabla\cdot(A(x)^{\mathsf T}\nabla v)-\nabla\cdot(b(x)\,v)\\
&=\nabla\cdot(A^{\mathsf T}\nabla v)-b\cdot\nabla v-(\nabla\cdot b)\,v.
\end{aligned}
\]
We understand the Dirichlet realisation of \(\mathcal L^*\) as the \(L^2\)-adjoint of the Dirichlet realisation of \(\mathcal L\). Note that both realisations force the zero-trace boundary condition. We denote by $(P_t^{\mathcal L^*})_{t\ge0}$ the corresponding homogeneous Dirichlet semigroup.

\begin{lemma}[Duality]\label{lem:duality-ell}
For all $t\ge0$ and all $f,g\in C_c^\infty(\Omega)$, it holds that
\[
\int_\Omega (P_t^{\mathcal L} f)(x)\,g(x)\,dx
=\int_\Omega f(x)\,(P_t^{\mathcal L^*} g)(x)\,dx.
\]
\end{lemma}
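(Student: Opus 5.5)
The identity will follow from the defining property of the adjoint semigroup on \(L^2(\Omega)\), so the plan is to reduce everything to that.

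By construction, the Dirichlet realisation \(L^*\) of \(\mathcal L^*\) is the \(L^2\)-adjoint of the Dirichlet realisation \(L\) of \(\mathcal L\). Both \(L\) and \(L^*\) generate strongly continuous semigroups on \(L^2(\Omega)\): the form \(a(u,v)=\int_\Omega A\nabla u\cdot\nabla v\,dx-\int_\Omega (b\cdot\nabla u)\,v\,dx\) on \(H^1_0(\Omega)\) is continuous and, by the uniform ellipticity of \(A_s\), satisfies a G\aa rding inequality, so the form-method semigroups exist. Standard semigroup theory then identifies the semigroup generated by the adjoint operator with the adjoint semigroup, that is, \((e^{tL})^*=e^{tL^*}\) (see, for instance, \cite[Chapter~1]{Ouh05}). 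Granting this, for every \(f,g\in L^2(\Omega)\) we obtain
\[
\langle P_t^{\mathcal L}f,g\rangle_{L^2}=\langle f,P_t^{\mathcal L^*}g\rangle_{L^2}.
\]
Because \(C_c^\infty(\Omega)\subset L^2(\Omega)\), this specialises to the stated identity.

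The one step that needs care is consistency of realisations. The \(P_t^{\mathcal L}\) appearing in the lemma is the \(L^2\) semigroup restricted to \(C_c^\infty\) data, while elsewhere \(P_t^{\mathcal L}\) is used on \(L^1(\Omega)\). The extrapolated \(L^1\) semigroup agrees with the \(L^2\) semigroup on \(L^1\cap L^2\), and \(C_c^\infty(\Omega)\subset L^1\cap L^2\), so both meanings give the same function and no ambiguity arises.

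As a cross-check, I would also give a classical proof. Set \(u(s)=P_s^{\mathcal L}f\), \(v(s)=P_{t-s}^{\mathcal L^*}g\), and \(h(s)=\int_\Omega u(s)\,v(s)\,dx\). Differentiating gives \(h'(s)=\int_\Omega(\mathcal L u)\,v-u\,(\mathcal L^* v)\,dx\). Green's formula on the \(C^2\) domain makes this vanish: the boundary terms contain \(u\) or \(v\), both of which are zero on \(\partial\Omega\), and \(\nabla\cdot(b\,uv)\) integrates to zero because \(uv|_{\partial\Omega}=0\). Hence \(h(0)=h(t)\), which is the claimed identity. The main obstacle in this route is justifying the differentiation up to \(s=0\) and \(s=t\) with enough boundary regularity. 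That is handled by the fact that \(f,g\in C_c^\infty(\Omega)\) satisfy every compatibility condition, so the solutions are smooth up to the parabolic boundary by \cite{Lie96}. Alternatively, one can simply rely on the abstract adjoint-semigroup identity above, which avoids this issue altogether.
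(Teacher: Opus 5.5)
Your proof is correct, but your primary route differs from the paper's. The paper defines the Dirichlet realisation of \(\mathcal L^*\) as the Hilbert-space adjoint \(L_2^*\) of \(L_2\). With that definition, the lemma is exactly the standard fact that the adjoint of a \(C_0\)-semigroup on a Hilbert space is the \(C_0\)-semigroup generated by the adjoint generator, and you simply invoke it.

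The paper instead re-derives this fact for the pair \(f,g\). It sets \(F(s)=\langle P_s^{\mathcal L}f,P_{t-s}^{\mathcal L^*}g\rangle_{L^2}\) and uses analytic regularisation to place \(u_s\in D(L_2)\) and \(v_s\in D(L_2^*)\) for \(s\in(0,t)\). Operator adjointness then gives \(F'(s)=\langle L_2u_s,v_s\rangle-\langle u_s,L_2^*v_s\rangle=0\), and strong continuity closes the argument at \(s=0,t\).

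Your citation is shorter and needs no analyticity, since the adjoint-semigroup theorem holds for every \(C_0\)-semigroup on a Hilbert space. The paper's argument is self-contained, at the price of invoking analytic regularisation to differentiate \(F\).

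Your secondary ``classical'' check uses the same interpolation \(h(s)\) as the paper, but replaces abstract adjointness by Green's formula. That substitution is what forces you into boundary regularity. The paper's operator version never needs it, because the vanishing of boundary terms is encoded in the domains \(D(L_2)\) and \(D(L_2^*)\). If you keep that route, note two things:
\begin{itemize}
\item On a merely \(C^2\) boundary the solutions are not \(C^\infty\) up to \(\partial\Omega\) in general. What you actually have is the \(W^{2,1}_r\) regularity of Lemma~\ref{lem:positive-time-regularity}, and that suffices for Green's formula.
\item Differentiability at \(s=0,t\) is unnecessary. Continuity of \(h\) on \([0,t]\) together with \(h'=0\) on \((0,t)\) already gives \(h(0)=h(t)\).
\end{itemize}
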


\begin{proof}
The case \(t=0\) follows immediately. Let us fix \(t>0\) and define
\[
F(s):=\int_\Omega (P_s^{\mathcal L}f)(x)\,(P_{t-s}^{\mathcal L^*}g)(x)\,dx,
\qquad s\in[0,t].
\]
For \(s\in(0,t)\), both
\[
u_s:=P_s^{\mathcal L}f,
\qquad
v_s:=P_{t-s}^{\mathcal L^*}g
\]
lie in \(D(L_2)\) and \(D(L_2^*)\), respectively, by analyticity and
regularisation of the uniformly elliptic Dirichlet realisations. Here, \(L_2\)
is the \(L^2\)-Dirichlet realisation of \(\mathcal L\), and \(L_2^*\) is its
Hilbert-space adjoint. Furthermore, the maps \(s\mapsto u_s\) and \(s\mapsto v_s\)
are \(C^1\) as \(L^2\)-valued maps on compact subintervals of \((0,t)\), with
\[
\frac{d}{ds}u_s=L_2u_s,
\qquad
\frac{d}{ds}v_s=-L_2^*v_s.
\]
It follows that \(F\) is differentiable on \((0,t)\), and
\[
\begin{aligned}
F'(s)
&=\langle L_2u_s,v_s\rangle_{L^2}
  -\langle u_s,L_2^*v_s\rangle_{L^2}\\
&=0,
\end{aligned}
\]
by the adjointness of \(L_2\) and \(L_2^*\). We deduce that \(F\) is constant on
\((0,t)\).

Given that the Dirichlet realisations of \(\mathcal L\) and \(\mathcal L^*\) generate
strongly continuous semigroups on \(L^2(\Omega)\), we have
\[
P_s^{\mathcal L}f\to f,
\qquad
P_{t-s}^{\mathcal L^*}g\to P_t^{\mathcal L^*}g
\quad\text{in }L^2(\Omega)\ \text{as }s\downarrow0,
\]
and
\[
P_s^{\mathcal L}f\to P_t^{\mathcal L}f,
\qquad
P_{t-s}^{\mathcal L^*}g\to g
\quad\text{in }L^2(\Omega)\ \text{as }s\uparrow t.
\]
This implies that \(F\) extends continuously to \([0,t]\), with
\[
F(0)=\int_\Omega f(x)\,(P_t^{\mathcal L^*}g)(x)\,dx,
\qquad
F(t)=\int_\Omega (P_t^{\mathcal L}f)(x)\,g(x)\,dx.
\]
Since \(F\) is constant on \((0,t)\), these endpoint values coincide.
\end{proof}

\begin{remark}\label{rem:ell-positive}
If $f\ge0$ in $\Omega$, then $P_t^{\mathcal L}f\ge0$ for every $t\ge0$, by the
parabolic maximum principle for homogeneous Dirichlet problems. The same holds for $P_t^{\mathcal L^*}f$. If we write
\[
\begin{aligned}
\mathcal L^*v&=\nabla\cdot(A^{\mathsf T}\nabla v)-b\cdot\nabla v+qv,\\
q&:=-(\nabla\cdot b)\in C^\infty(\overline\Omega),
\end{aligned}
\]
and choose $K\ge \|q\|_{L^\infty(\Omega)}$, the function
$e^{-Kt}P_t^{\mathcal L^*}f$ solves a homogeneous Dirichlet problem with
zero-order coefficient $q-K\le0$. By the maximum principle, we obtain
$P_t^{\mathcal L^*}f\ge0$.
\end{remark}

\subsection{Infinite \texorpdfstring{$W_{b,p}$}{Wb,p}-Lipschitz constants for \texorpdfstring{$p>1$}{p>1}}

\begin{proposition}[Test measures on boundary layer]\label{prop:amplification-packet-uniform-ell}
We fix \(p>1\) and let \((P_t^{\mathcal L})_{t\ge0}\) be the homogeneous Dirichlet semigroup associated to \eqref{eq:ell-operator}. For every \(t>0\), there exist constants \(\alpha>0\) and \(r_*>0\) with the following property: for every \(\varepsilon\in(0,r_*/4)\), there exists \(u_0^\varepsilon\in C_c^\infty(\Omega)\), \(u_0^\varepsilon\ge0\), such that
\[
\int_\Omega u_0^\varepsilon\,dx=1,
\qquad
\supp(u_0^\varepsilon)\subset\{x\in\Omega:\ \varepsilon\le \delta(x)\le 2\varepsilon\}.
\]
If we set \(\mu_\varepsilon:=u_0^\varepsilon\,dx\), and write
\[
P_t^{\mathcal L}\mu_\varepsilon:=(P_t^{\mathcal L}u_0^\varepsilon)\,dx,
\]
we obtain
\[
W_{b,p}(\mu_\varepsilon,0)^p\le (2\varepsilon)^p,
\qquad
W_{b,p}\bigl((P_t^{\mathcal L}u_0^\varepsilon)\,dx,0\bigr)^p
\ge \alpha\,\varepsilon.
\]
\end{proposition}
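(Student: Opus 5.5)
The plan is to repeat the proof of Proposition~\ref{prop:amplification-packet-ell} almost verbatim. Two changes are needed: the adjoint semigroup \(P_t^{\mathcal L^*}\) replaces the self-adjoint \(P_t\), and Lemma~\ref{lem:duality-ell} replaces Lemma~\ref{lem:duality-L1-C0}.

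First, fix a connected component \(\Omega'\) and pick \(\varphi\in C_c^\infty(\Omega')\) with \(\varphi\ge0\) and \(\varphi\not\equiv0\). Set \(\delta_*:=\min_{\supp\varphi}\delta>0\) and \(\eta:=\kappa\varphi\) with \(\kappa:=\delta_*^p/(2\|\varphi\|_\infty)\), extended by zero. Then \(\eta\in C_c^\infty(\Omega)\), \(\eta\ge0\), \(\eta\not\equiv0\), and \(0\le\eta\le\delta^p\).

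Next, apply Lemma~\ref{lem:hopf-linear-general} to \(\mathcal A:=\mathcal L^*\). This is legitimate because \(\mathcal L^*v=\nabla\cdot(A^{\mathsf T}\nabla v)-b\cdot\nabla v+qv\) with \(q=-\nabla\cdot b\in C^\infty(\overline\Omega)\). Moreover, \(A^{\mathsf T}\) has the same symmetric part \(A_s\) and the same operator-norm bound, so the hypotheses of that lemma, which allow a zero-order term, hold. For \(w:=P_t^{\mathcal L^*}\eta\) this yields \(\alpha,r>0\) with \(w\ge\alpha\delta\) on \(\Omega'\cap\{\delta\le r\}\). One point needs checking: the semigroup in Lemma~\ref{lem:hopf-linear-general} must be the same as the \(L^2\)-adjoint realisation used in Lemma~\ref{lem:duality-ell}. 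Both are the zero-trace Dirichlet realisation of \(\mathcal L^*\), and for smooth compactly supported \(\eta\) they agree with the classical solution by uniqueness.

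Then, with \(r_*:=\min\{r,4\varepsilon_0^{\mathrm{coll}}(\Omega')\}\), take the collar bump \(u_0^\varepsilon\) from Lemma~\ref{lem:collar-bump} on \(\Omega'\) and extend it by zero. The support, mass, and upper bound \(W_{b,p}(\mu_\varepsilon,0)^p=\int\delta^pu_0^\varepsilon\le(2\varepsilon)^p\) follow exactly as before from Lemma~\ref{lem:tozero} and Lemma~\ref{lem:components}.

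For the lower bound, Remark~\ref{rem:ell-positive} gives \(P_t^{\mathcal L}u_0^\varepsilon\ge0\). Combining Lemma~\ref{lem:tozero}, the bound \(\eta\le\delta^p\), and Lemma~\ref{lem:duality-ell} with \(f=u_0^\varepsilon\) and \(g=\eta\) (both in \(C_c^\infty(\Omega)\)), we get
\[
W_{b,p}\bigl((P_t^{\mathcal L}u_0^\varepsilon)\,dx,0\bigr)^p
=\int_\Omega\delta^pP_t^{\mathcal L}u_0^\varepsilon\,dx
\ge\int_\Omega\eta\,P_t^{\mathcal L}u_0^\varepsilon\,dx
=\int_\Omega u_0^\varepsilon\,w\,dx
\ge\alpha\varepsilon,
\]
since \(\supp u_0^\varepsilon\subset\Omega'\cap\{\varepsilon\le\delta\le2\varepsilon\}\subset\{\delta\le r\}\) and \(u_0^\varepsilon\) has unit mass.

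The main obstacle is the applicability of the Hopf bound to the adjoint operator. The hypotheses of Lemma~\ref{lem:hopf-linear-general} must be verified for \(\mathcal L^*\), including the non-symmetric principal part and the zero-order term \(q\) of either sign. The realisation must also be matched as described above. Everything else is a transcription of the Laplacian case.
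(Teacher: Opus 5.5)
Your proposal is correct and follows essentially the same route as the paper: reuse the Laplacian argument (same component, same $\eta$ with $0\le\eta\le\delta^p$, same collar packets, same $r_*$), replace self-adjointness by Lemma~\ref{lem:duality-ell}, apply Lemma~\ref{lem:hopf-linear-general} to $\mathcal A=\mathcal L^*$, and get positivity from Remark~\ref{rem:ell-positive}. Your explicit checks go slightly beyond what the paper writes out: the symmetric part and norm bound of $A^{\mathsf T}$, the zero-order term $q=-\nabla\cdot b$, and the identification of the adjoint realisation with the one in the Hopf lemma.
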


\begin{proof}
The argument is the proof of Proposition~\ref{prop:amplification-packet-ell},
with the following substitutions. We choose the same component \(\Omega'\), the
same function \(\eta\in C_c^\infty(\Omega')\), \(\eta\ge0\), \(\eta\not\equiv0\),
with \(0\le\eta\le\delta^p\), and use the same collar construction for the
packets \(u_0^\varepsilon\). In the adjoint test step, we simply replace the self-adjoint identity for the Laplacian by Lemma~\ref{lem:duality-ell}, replace \(P_t\eta\) by \(P_t^{\mathcal L^*}\eta\), and apply Remark~\ref{rem:ell-positive} for positivity of
\(P_t^{\mathcal L}u_0^\varepsilon\), with Lemma~\ref{lem:hopf-linear-general} to \(\mathcal A=\mathcal L^*\) on \(\Omega'\). We set
\[
w:=P_t^{\mathcal L^*}\eta .
\]
By Lemma~\ref{lem:hopf-linear-general}, there exist \(\alpha>0\) and \(r>0\)
such that
\[
w(x)\ge \alpha\,\delta(x)
\qquad
\text{for all }x\in\Omega'\text{ with }\delta(x)\le r .
\]
Let \(\varepsilon_0^{\mathrm{coll}}(\Omega')>0\) be the constant from
Lemma~\ref{lem:collar-bump} that we apply to \(\Omega'\), and set
\[
r_*:=\min\{r,4\varepsilon_0^{\mathrm{coll}}(\Omega')\}.
\]
For each \(\varepsilon\in(0,r_*/4)\), by the collar construction, we obtain
\(u_0^\varepsilon\in C_c^\infty(\Omega')\), extended by zero to \(\Omega\), with
\[
\int_\Omega u_0^\varepsilon\,dx=1,
\qquad
\supp(u_0^\varepsilon)\subset
\{x\in\Omega':\varepsilon\le\delta(x)\le2\varepsilon\},
\]
where Lemma~\ref{lem:components} guarantees that \(\delta\) is computed in
\(\Omega'\) on this collar. By Lemma~\ref{lem:tozero}, we then get
\[
W_{b,p}(\mu_\varepsilon,0)^p
=\int_\Omega \delta^p u_0^\varepsilon\,dx
\le (2\varepsilon)^p .
\]
On the other hand, by positivity, Lemma~\ref{lem:tozero}, the bound
\(0\le\eta\le\delta^p\), and Lemma~\ref{lem:duality-ell}, it follows that
\[
\begin{aligned}
W_{b,p}\bigl((P_t^{\mathcal L}u_0^\varepsilon)\,dx,0\bigr)^p
&=\int_\Omega \delta(x)^p P_t^{\mathcal L}u_0^\varepsilon(x)\,dx\\
&\ge \int_\Omega \eta(x)P_t^{\mathcal L}u_0^\varepsilon(x)\,dx\\
&=\int_\Omega P_t^{\mathcal L^*}\eta(x)\,u_0^\varepsilon(x)\,dx\\
&=\int_\Omega w(x)u_0^\varepsilon(x)\,dx\\
&\ge \alpha\varepsilon .
\end{aligned}
\]
This proves the claim.
\end{proof}

\begin{theorem}[Uniformly elliptic extension, \texorpdfstring{$p>1$}{p>1}]\label{thm:infinite-lip-Wbp-ell}
We fix \(p>1\) and let \((P_t^{\mathcal L})_{t\ge0}\) be the homogeneous Dirichlet semigroup associated to \eqref{eq:ell-operator}. For every \(t>0\) and every \(m>0\), the density-induced map \(\rho\,dx\mapsto(P_t^{\mathcal L}\rho)\,dx\) satisfies
\[
\Lip_{W_{b,p}}\!\Bigl(\rho\,dx\mapsto(P_t^{\mathcal L}\rho)\,dx;
\{\rho\,dx:\rho\in L^1(\Omega),\ \rho\ge0,\ \int_\Omega \rho\,dx\le m\}\Bigr)=\infty.
\]
Moreover, the map
\[
\rho\,dx\longmapsto (P_t^{\mathcal L}\rho)\,dx
\]
is discontinuous at \(0\,dx\) on the cone
\[
\{\rho\,dx:\rho\in L^1(\Omega),\ \rho\ge0\}
\]
endowed with the metric \(W_{b,p}\).
\end{theorem}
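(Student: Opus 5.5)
The argument mirrors the proof of Proposition~\ref{prop:Delta-lower-consequences}, with Proposition~\ref{prop:amplification-packet-uniform-ell} supplying the witnesses. Fix \(p>1\), \(t>0\) and \(m>0\), and let \(\alpha>0\), \(r_*>0\) and the packets \(u_0^\varepsilon\in C_c^\infty(\Omega)\), \(\varepsilon\in(0,r_*/4)\), be as given there. Set \(\mu_\varepsilon=u_0^\varepsilon\,dx\). All witnesses we use are nonnegative \(L^1\) densities, as are \(0\,dx\) and their images: \(P_t^{\mathcal L}u_0^\varepsilon\ge0\) by Remark~\ref{rem:ell-positive}, and \(P_t^{\mathcal L}0=0\) by linearity. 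So everything stays inside the cone on which the density-induced map is considered.

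For the infinite Lipschitz constant, take \(a\in(0,m]\) and compare \(a\mu_\varepsilon\) with \(0\,dx\). Both lie in the mass sublevel \(\{\rho\,dx:\rho\ge0,\int\rho\le m\}\), and \(a\mu_\varepsilon\neq0\). By linearity of \(P_t^{\mathcal L}\) and the homogeneity of Remark~\ref{rem:Wbp-homogeneity}, the factor \(a^{1/p}\) appears in both the numerator and the denominator and cancels. Hence
\[
\frac{W_{b,p}\bigl((P_t^{\mathcal L}(au_0^\varepsilon))\,dx,0\bigr)}{W_{b,p}(a\mu_\varepsilon,0)}
=\frac{W_{b,p}\bigl((P_t^{\mathcal L}u_0^\varepsilon)\,dx,0\bigr)}{W_{b,p}(\mu_\varepsilon,0)}
\ge\frac{\alpha^{1/p}\varepsilon^{1/p}}{2\varepsilon}
=\frac{\alpha^{1/p}}{2}\,\varepsilon^{-(1-1/p)}.
\]
Since \(p>1\), the right-hand side tends to \(+\infty\) as \(\varepsilon\downarrow0\). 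This is the first claim.

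For discontinuity at \(0\,dx\), take \(\tau_\varepsilon:=\varepsilon^{-1}\mu_\varepsilon\), which is again a nonnegative \(L^1\) density. Homogeneity gives \(W_{b,p}(\tau_\varepsilon,0)^p=\varepsilon^{-1}W_{b,p}(\mu_\varepsilon,0)^p\le2^p\varepsilon^{p-1}\to0\). On the other hand,
\[
W_{b,p}\bigl((P_t^{\mathcal L}(\varepsilon^{-1}u_0^\varepsilon))\,dx,0\bigr)^p=\varepsilon^{-1}W_{b,p}\bigl((P_t^{\mathcal L}u_0^\varepsilon)\,dx,0\bigr)^p\ge\alpha .
\]
So \(\tau_\varepsilon\to0\,dx\) in \(W_{b,p}\) while the images stay at distance at least \(\alpha^{1/p}\) from \(0\,dx=P_t^{\mathcal L}(0\,dx)\). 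This is the second claim.

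There is no serious obstacle: all the analytic content (duality, positivity and the Hopf bound for \(\mathcal L^*\)) is already packaged in Proposition~\ref{prop:amplification-packet-uniform-ell}. The only points to check are the two justifying the homogeneity step, namely that the density-induced map is linear on the cone, so that \(P_t^{\mathcal L}(a\rho)=aP_t^{\mathcal L}\rho\), and that \(W_{b,p}\) scales by \(a^{1/p}\) under \(\rho\mapsto a\rho\).
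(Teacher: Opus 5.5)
Your proposal is correct and follows the paper's proof essentially verbatim. You use the same packets from Proposition~\ref{prop:amplification-packet-uniform-ell}, the same homogeneity cancellation for a mass-rescaled packet compared against \(0\,dx\) (the paper takes \(a=m\)), and the same rescaling \(\tau_\varepsilon=\varepsilon^{-1}\mu_\varepsilon\) for the discontinuity at \(0\,dx\).
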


\begin{proof}
By Proposition~\ref{prop:amplification-packet-uniform-ell}, for each sufficiently small \(\varepsilon>0\),
\[
\frac{W_{b,p}\bigl((P_t^{\mathcal L}u_0^\varepsilon)\,dx,0\bigr)}
{W_{b,p}(\mu_\varepsilon,0)}
\ge
\frac{\alpha^{1/p}}{2}\,\varepsilon^{-(1-1/p)}\xrightarrow[\varepsilon\downarrow0]{}+\infty.
\]
We fix \(m>0\) and set \(\sigma_\varepsilon:=m\mu_\varepsilon\). By Remark~\ref{rem:Wbp-homogeneity}, it follows that
\[
\frac{W_{b,p}\bigl((P_t^{\mathcal L}(m u_0^\varepsilon))\,dx,0\bigr)}
{W_{b,p}(\sigma_\varepsilon,0)}
=
\frac{W_{b,p}\bigl((P_t^{\mathcal L}u_0^\varepsilon)\,dx,0\bigr)}
{W_{b,p}(\mu_\varepsilon,0)}.
\]
Given that \(0=0\,dx\) belongs to the mass sublevel that we consider, the divergence of this ratio implies that the global \(W_{b,p}\)-Lipschitz constant on that set is infinite. For the discontinuity at \(0\), we set \(\tau_\varepsilon:=\varepsilon^{-1}\mu_\varepsilon\). Again, by Remark~\ref{rem:Wbp-homogeneity}, we obtain
\[
W_{b,p}(\tau_\varepsilon,0)^p
=\varepsilon^{-1}W_{b,p}(\mu_\varepsilon,0)^p
\le 2^p\varepsilon^{p-1}\xrightarrow[\varepsilon\downarrow0]{}0,
\]
whereas
\[
W_{b,p}\bigl((P_t^{\mathcal L}(\varepsilon^{-1}u_0^\varepsilon))\,dx,0\bigr)^p
=\varepsilon^{-1}W_{b,p}\bigl((P_t^{\mathcal L}u_0^\varepsilon)\,dx,0\bigr)^p
\ge \alpha.
\]
We conclude that the time-\(t\) map is discontinuous at \(0\,dx\).
\end{proof}

\subsection{Constant Dirichlet boundary values}

Let us fix \(c\ge0\). Due to the fact that the operator \(\mathcal L\) in \eqref{eq:ell-operator} does not contain a zero-order term, we have \(\mathcal L c=0\), and
the Dirichlet flow with boundary value \(c\) is the affine perturbation
\[
S_t^{\mathcal L}\rho_0:=c+P_t^{\mathcal L}(\rho_0-c).
\]
Whenever \(c\ge0\) and \(\rho_0\ge0\) on \(\Omega\), the parabolic maximum principle gives
\[
S_t^{\mathcal L}\rho_0\ge0 \qquad \text{in }\Omega \text{ for every } t\ge0.
\]
In particular, for such data, \((S_t^{\mathcal L}\rho_0)\,dx\in\mathcal M(\Omega)\).
For \(\mu=\rho\,dx\) with \(\rho\in L^1(\Omega)\), we write
\[
S_t^{\mathcal L}\mu:=(S_t^{\mathcal L}\rho)\,dx.
\]

\begin{theorem}[Affine Dirichlet case, \texorpdfstring{$p>1$}{p>1}]\label{thm:infinite-lip-Wbp-c-ell}
We fix $p>1$, $c\ge0$, and $t>0$. If we interpret the following object as a map from the class of initial data, we have
\[
\mathcal X_c\subset\mathcal M(\Omega)
\]
into \(\mathcal M(\Omega)\),
\[
\rho\,dx\longmapsto (S_t^{\mathcal L}\rho)\,dx
\]
has infinite \(W_{b,p}\)-Lipschitz constant:
\[
\Lip_{W_{b,p}}\!\left(S_t^{\mathcal L};\mathcal X_c\right)=\infty .
\]
More precisely, for every \(L>0\), there exist \(\rho_0^1,\rho_0^2\in\mathcal D_c\) with \(\rho_0^i\ge c\) on \(\overline\Omega\) and
\[
W_{b,p}\bigl(\rho_0^1\,dx,\rho_0^2\,dx\bigr)>0,
\]
such that the respective solutions satisfy
\[
\begin{multlined}
W_{b,p}\bigl(\rho^1(t,\cdot)\,dx,\rho^2(t,\cdot)\,dx\bigr)\\
\ge L\,W_{b,p}\bigl(\rho^1(0,\cdot)\,dx,\rho^2(0,\cdot)\,dx\bigr).
\end{multlined}
\]
In particular, \(S_t^{\mathcal L}:\mathcal X_c\to\mathcal M(\Omega)\) is
discontinuous at the point \(c\,dx\), where we endow both domain and target
with the metric induced by \(W_{b,p}\).
\end{theorem}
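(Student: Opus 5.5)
The plan is to reduce the affine statement to the homogeneous packets of Proposition~\ref{prop:amplification-packet-uniform-ell}, by placing the comparison pair on top of the constant background $c$. Fix $p>1$, $c\ge0$, $t>0$. For small $\varepsilon$, let $u_0^\varepsilon\in C_c^\infty(\Omega)$, $u_0^\varepsilon\ge0$, be the unit-mass collar packet from that proposition, so that
\[
W_{b,p}(\mu_\varepsilon,0)^p\le(2\varepsilon)^p,\qquad
W_{b,p}\bigl((P_t^{\mathcal L}u_0^\varepsilon)\,dx,0\bigr)^p\ge\alpha\varepsilon .
\]
Set $\rho_0^1:=c$ and $\rho_0^2:=c+a\,u_0^\varepsilon$ with $a>0$ to be chosen. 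Both lie in $\mathcal D_c$: they are smooth and nonnegative, they equal $c$ near $\partial\Omega$ because $\supp u_0^\varepsilon\Subset\Omega$, and they are $\ge c$. Since $\mathcal L c=0$, we get $S_t^{\mathcal L}\rho_0^1=c$ and $S_t^{\mathcal L}\rho_0^2=c+aP_t^{\mathcal L}u_0^\varepsilon$.

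The key step is an additivity (translation-invariance) property of $W_{b,p}$ under adding a common nonnegative measure when one of the two measures is the common part:
\[
W_{b,p}(\sigma+\mu,\sigma)=W_{b,p}(\mu,0)\qquad\text{for }\sigma,\mu\in\mathcal M(\Omega).
\]
For $\le$, take the identity plan $(\mathrm{Id},\mathrm{Id})_\#\sigma$ plus any plan in $\Adm(\mu,0)$; the sum is admissible for $(\sigma+\mu,\sigma)$ and the diagonal costs nothing. For $\ge$, let $\gamma\in\Adm(\sigma+\mu,\sigma)$ be arbitrary. The total first-marginal mass in $\Omega$ exceeds the second-marginal mass in $\Omega$ by $\mu(\Omega)$, so at least that much mass must be sent to $\partial\Omega$. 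Each $x$ pays at least $\delta(x)^p$ per unit of mass sent to the boundary. However, a crude count is not enough, because mass from $\Omega$ to $\Omega$ also costs something.

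I would handle the lower bound via the dual side instead. For a $1$-Lipschitz-type competitor one would use Kantorovich duality, but for $p>1$ the simplest route is a test function. Since only lower bounds are needed, I would prove the weaker inequality
\[
W_{b,p}(\sigma+\mu,\sigma)^p\ge \int\psi\,d\mu
\]
for every $\psi$ satisfying $\psi(x)-\psi(y)\le|x-y|^p$ with $\psi=0$ on $\partial\Omega$, $\psi\ge0$. Given admissible $\gamma$ (with the boundary–boundary part removed), extend $\psi$ by $0$ on the boundary. Then $\int(\psi(x)-\psi(y))\,d\gamma=\int\psi\,d(\sigma+\mu)-\int\psi\,d\sigma=\int\psi\,d\mu$, and this is bounded by the cost. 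I would then apply this with $\psi:=\kappa'\, w$-type functions. More directly, reuse the adjoint function $\eta\le\delta^p$ from Proposition~\ref{prop:amplification-packet-uniform-ell}: replace $\eta$ by a smooth $\eta$ satisfying the $p$-cost condition $\eta(x)-\eta(y)\le|x-y|^p$. This is achievable by choosing $\eta=\kappa\varphi$ with $\kappa$ small, since $\varphi$ is supported at distance $\delta_*$ from the boundary: for $|x-y|\ge\delta_*/2$ use $\kappa\|\varphi\|_\infty\le(\delta_*/2)^p$, and for $|x-y|<\delta_*/2$ use... The closeness regime is the main obstacle, because $|x-y|^p$ is superlinear and a Lipschitz bound does not dominate it for nearby points. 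To circumvent this, I would instead use the upper bound $W_{b,p}^p\le D^{p-1}W_{b,1}$ from Lemma~\ref{lem:Wb1-Wbp-comparison} only for the initial distance, and for the final distance use Lemma~\ref{lem:Wb1-Wbp-comparison}(2) on the mass sublevel. Concretely,
\[
W_{b,p}(\sigma+\nu,\sigma)\ge (2m)^{-(1-1/p)}W_{b,1}(\sigma+\nu,\sigma),
\]
where $m$ is the total mass. The $W_{b,1}$ term is computed exactly by the duality of Theorem~\ref{thm:KR-Wb1}, and the test function $\varphi=\delta\wedge r$ gives $W_{b,1}(\sigma+\nu,\sigma)\ge\int(\delta\wedge r)\,d\nu$.

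Assembling these, with $\nu=aP_t^{\mathcal L}u_0^\varepsilon\,dx$ and $m\le c|\Omega|+aC_t$, I need $\int\delta\,P_t^{\mathcal L}u_0^\varepsilon\ge\alpha'\varepsilon$. This follows exactly as in Proposition~\ref{prop:amplification-packet-uniform-ell}, with $\eta\le\delta$ in place of $\eta\le\delta^p$, via duality and Lemma~\ref{lem:hopf-linear-general}. The initial distance is at most $(a(2\varepsilon)^p)^{1/p}=2a^{1/p}\varepsilon$ by the easy direction. Taking $a=1$, the ratio is at least $c'\varepsilon/\varepsilon$, which is only bounded. So I instead exploit the exponents: $W_{b,1}$ scales like $\varepsilon$ in both the initial and final distances, which is not good enough. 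Hence I must return to the $p$-cost test function. For the final distance, the genuine route is the additivity identity, whose lower bound I would prove via optimal plans (Lemma~\ref{lem:Wbp-metric} and the existence results of \cite{Che}) and a cyclical-monotonicity argument showing that in an optimal plan the diagonal common part $\sigma$ can be assumed to stay put. This gives $W_{b,p}(c+\nu,c)=W_{b,p}(\nu,0)\ge(a\alpha\varepsilon)^{1/p}$, and the ratio behaves like $\varepsilon^{1/p-1}\to\infty$. Discontinuity at $c\,dx$ follows by taking $a=\varepsilon^{-1}$ as in Theorem~\ref{thm:infinite-lip-Wbp-ell}.
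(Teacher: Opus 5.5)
There is a genuine gap. Your argument finally rests on the identity $W_{b,p}(\sigma+\mu,\sigma)=W_{b,p}(\mu,0)$. For $p>1$ this identity is false in general when $\sigma\neq0$; only the easy inequality $\le$ (which you prove correctly) survives. Cancellation of common mass is special to $p=1$, where the cost is a metric. For the strictly convex cost $|x-y|^p$ it is cheaper to push the excess through the background by many short moves. So no cyclical-monotonicity argument can show that $\sigma$ stays put.

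A one-dimensional check: take $\Omega=(0,1)$, $\sigma=dx$ and $\mu=\varepsilon h\,dx$ with $0\le h\in L^\infty$, $h\not\equiv0$. The monotone plan sending the excess into the boundary point $0$ moves every particle by at most $\varepsilon\|h\|_{L^1}$. Hence $W_{b,p}(\sigma+\mu,\sigma)=O(\varepsilon)$, whereas $W_{b,p}(\mu,0)=\bigl(\varepsilon\int_0^1\delta^p h\,dx\bigr)^{1/p}\sim\varepsilon^{1/p}$.

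The same mechanism breaks your construction. At fixed $t>0$ the density $P_t^{\mathcal L}u_0^\varepsilon$ is pointwise $O(\varepsilon)$, because the kernel vanishes linearly in the starting point. So for fixed amplitude $a$ and $c>0$, the perturbation $g:=aP_t^{\mathcal L}u_0^\varepsilon$ can be carried through the background $c$ to $\partial\Omega$ at cost $O(\varepsilon^p)$. One way is to follow the flow of $\nabla\phi/(c+(1-s)g)$ with $\Delta\phi=g$ and $\phi|_{\partial\Omega}=0$, which is pure outflow. Meanwhile the initial distance is $\gtrsim\varepsilon$, by Lemma~\ref{lem:Wb1-Wbp-comparison}(2) and $W_{b,1}(c+au_0^\varepsilon,c)\ge a\int_\Omega\delta\,u_0^\varepsilon\,dx$ (take $\varphi=\delta$ in Theorem~\ref{thm:KR-Wb1}). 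So with $a$ fixed the ratio stays bounded as $\varepsilon\downarrow0$. The bounded ratio you found in your $W_{b,1}$ attempt was the truth, not an artefact of a lossy route.

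The test-function route also cannot work. The condition $\psi(x)-\psi(y)\le|x-y|^p$ with $p>1$ forces $\psi$ to be constant on each component, hence $\psi\equiv0$. When $c=0$ your identity is trivial and your argument is fine; the whole difficulty is the background.

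The missing idea is to let the amplitude grow so that the excess dominates the background, and to replace additivity by the triangle inequality through the zero measure. This is the paper's route. With $\rho_0^1=c+Mu_0^\varepsilon$ and $\rho_0^2=c$, Lemma~\ref{lem:tozero} gives
\[
W_{b,p}\bigl(\rho^1(t,\cdot)\,dx,\rho^2(t,\cdot)\,dx\bigr)\ge W_{b,p}\bigl(\rho^1(t,\cdot)\,dx,0\bigr)-W_{b,p}(c\,dx,0)=(E_c+MB_\varepsilon)^{1/p}-E_c^{1/p},
\]
where $E_c:=c\int_\Omega\delta^p\,dx$ and $B_\varepsilon:=\int_\Omega\delta^p\,P_t^{\mathcal L}u_0^\varepsilon\,dx\ge\alpha\varepsilon$. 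Since $s\mapsto(s+y)^{1/p}-s^{1/p}$ is nonincreasing, choosing $M\ge E_c/(\alpha\varepsilon)$ gives the lower bound $(2^{1/p}-1)(M\alpha\varepsilon)^{1/p}$. Your easy-direction plan bounds the initial distance by $2M^{1/p}\varepsilon$, so $M$ cancels and the ratio is at least $\tfrac12(2^{1/p}-1)\alpha^{1/p}\varepsilon^{-(1-1/p)}$.

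Taking $M=M_0\varepsilon^{-1}$ gives the discontinuity at $c\,dx$. Your choice $a=\varepsilon^{-1}$ there is the right regime, but it too must be justified by this triangle-inequality bound rather than by additivity.
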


\begin{proof}
We fix $L>0$, and let $r_0>0$ and $\Pi:U\to\partial\Omega$ be as in Lemma~\ref{lem:tubular}. We also fix \(b_\ast\in\partial\Omega\) and extend \(\Pi\) to a Borel map
\[
\widehat\Pi:\Omega\to\partial\Omega
\]
by setting \(\widehat\Pi=\Pi\) on \(U\cap\Omega\) and \(\widehat\Pi=b_\ast\) on \(\Omega\setminus U\). Let $\alpha>0$, $r_*>0$, $u_0^\varepsilon\in C_c^\infty(\Omega)$, and $\mu_\varepsilon:=u_0^\varepsilon\,dx$ be as in Proposition~\ref{prop:amplification-packet-uniform-ell}, and choose $\varepsilon\in(0,\min\{r_*/4,r_0/4\})$. It follows that
\begin{equation}\label{eq:aux-onepoint-c}
\begin{aligned}
W_{b,p}(\mu_\varepsilon,0)&\le 2\varepsilon,\\
W_{b,p}\bigl((P_t^{\mathcal L}u_0^\varepsilon)\,dx,0\bigr)^p&\ge \alpha\,\varepsilon.
\end{aligned}
\end{equation}
We define
\[
\rho_0^2\equiv c,
\qquad
\rho_0^1:=c+M u_0^\varepsilon,
\]
with $M>0$ that we choose below. Because $\supp(u_0^\varepsilon)\subset\{\delta\ge\varepsilon\}$, both $\rho_0^1$ and $\rho_0^2$ equal $c$ on $\{\delta<\varepsilon\}$, hence in a neighbourhood of $\partial\Omega$. Moreover, $\rho_0^i\ge c\ge0$ holds. Let
\[
\rho^i(t,\cdot):=S_t^{\mathcal L}\rho_0^i,
\qquad i=1,2.
\]
At time \(0\), due to \(\supp(u_0^\varepsilon)\subset\{\delta\le2\varepsilon\}\subset U\), the plan
\[
\gamma_0:=(\mathrm{Id},\mathrm{Id})_\#(c\,dx)+(\mathrm{Id},\widehat\Pi)_\#(M u_0^\varepsilon\,dx)
\]
is admissible for \((\rho_0^1\,dx,\rho_0^2\,dx)\). Since \(|x-\Pi(x)|=\delta(x)\) on \(U\cap\Omega\), this gives
\[
\begin{aligned}
\int_{\overline\Omega\times\overline\Omega}|x-y|^p\,d\gamma_0(x,y)
&= M\int_\Omega \delta(x)^p\,u_0^\varepsilon(x)\,dx\\
&\le (2\varepsilon)^p M,
\end{aligned}
\]
so
\[
W_{b,p}(\rho_0^1\,dx,\rho_0^2\,dx)\le 2\,M^{1/p}\varepsilon.
\]
Let us set
\[
\begin{aligned}
E_c&:=\int_\Omega c\,\delta(x)^p\,dx,\\
B_\varepsilon&:=
W_{b,p}\bigl((P_t^{\mathcal L}u_0^\varepsilon)\,dx,0\bigr)^p.
\end{aligned}
\]
Given that $P_t^{\mathcal L}u_0^\varepsilon\ge0$, Lemma~\ref{lem:tozero} and the triangle inequality yield
\[
\begin{multlined}
W_{b,p}\bigl(\rho^1(t,\cdot)\,dx,\rho^2(t,\cdot)\,dx\bigr)\\
\ge W_{b,p}\bigl(\rho^1(t,\cdot)\,dx,0\bigr)
-W_{b,p}\bigl(\rho^2(t,\cdot)\,dx,0\bigr)\\
=(E_c+M B_\varepsilon)^{1/p}-E_c^{1/p}.
\end{multlined}
\]
By \eqref{eq:aux-onepoint-c}, we have $B_\varepsilon\ge\alpha\varepsilon$. We now set \(y:=M B_\varepsilon\) and consider
\[
F_y(s):=(s+y)^{1/p}-s^{1/p}\qquad(s\ge0).
\]
Notice that the function \(F_y\) is continuous on \([0,\infty)\) and is decreasing on
\((0,\infty)\). This holds because
\[
F_y'(s)=\frac1p\Bigl((s+y)^{\frac{1-p}{p}}-s^{\frac{1-p}{p}}\Bigr)\le0
\qquad(s>0).
\]
If $M\alpha\varepsilon\ge E_c$, then $y\ge E_c$, so
\[
F_y(E_c)\ge F_y(y)=(2^{1/p}-1)y^{1/p}.
\]
As a result,
\[
\begin{multlined}
W_{b,p}\bigl(\rho^1(t,\cdot)\,dx,\rho^2(t,\cdot)\,dx\bigr)\\
\ge F_y(E_c)
\ge F_y(y)
=(2y)^{1/p}-y^{1/p}
=(2^{1/p}-1)\,y^{1/p}\\
\ge (2^{1/p}-1)\,(M\alpha\varepsilon)^{1/p}.
\end{multlined}
\]
Therefore,
\[
\frac{W_{b,p}\bigl(\rho^1(t,\cdot)\,dx,\rho^2(t,\cdot)\,dx\bigr)}
{W_{b,p}\bigl(\rho^1(0,\cdot)\,dx,\rho^2(0,\cdot)\,dx\bigr)}
\ge \frac{(2^{1/p}-1)\alpha^{1/p}}{2}\,\varepsilon^{-(1-1/p)}.
\]
We choose $\varepsilon$ so that the right-hand side is at least $L$, and then choose $M>0$ with $M\ge E_c/(\alpha\varepsilon)$. This establishes the infinite-Lipschitz statement.

For the discontinuity at \(\rho\equiv c\), we simply repeat the construction with
\(M_0>0\) fixed and set
\[
M=M_0\varepsilon^{-1}.
\]
Since \(B_\varepsilon\ge\alpha\varepsilon\), this choice gives
\[
M B_\varepsilon\ge M_0\alpha.
\]
We obtain
\[
\begin{multlined}
W_{b,p}\bigl(\rho^1(t,\cdot)\,dx,\rho^2(t,\cdot)\,dx\bigr)\\
\ge (E_c+M B_\varepsilon)^{1/p}-E_c^{1/p}
\ge (E_c+M_0\alpha)^{1/p}-E_c^{1/p}>0.
\end{multlined}
\]
On the other hand,
\[
W_{b,p}(\rho_0^1\,dx,\rho_0^2\,dx)
\le 2M_0^{1/p}\varepsilon^{1-1/p}\xrightarrow[\varepsilon\downarrow0]{}0.
\]
We conclude that the time-\(t\) map is discontinuous at \(c\,dx\) as a map
\(\mathcal X_c\to\mathcal M(\Omega)\), where both spaces are endowed with
\(W_{b,p}\).
\end{proof}

\section{Results for the metric \texorpdfstring{$W_{b,2}$}{Wb,2}}\label{sec:fg-evi}

We now specialise the results to \(p=2\). The metric \(W_{b,2}\) is the finite-measure Figalli--Gigli boundary-reservoir distance. We consider the affine flow
\(S_t^c\rho=c+P_t(\rho-c)\) for arbitrary \(c\ge0\). Whenever we compare this flow with the entropy constructions of Ambrosio--Gigli and Figalli--Gigli, we fix the boundary value by the entropy normalisation: \(c=1\) for \(\int_\Omega(\rho\log\rho-\rho)\,dx\) \cite[Theorem~3.5]{FG10}, and \(c=e^{-1}\) for \(\int_\Omega\rho\log\rho\,dx\) \cite[Theorem~6.6]{AG13}. Indeed, the first variations of \(\int_\Omega(\rho\log\rho-\rho)\,dx\) and
\(\int_\Omega\rho\log\rho\,dx\) are, respectively, \(\log\rho\) and
\(\log\rho+1\), so the reservoir equilibrium condition selects the constant boundary values \(1\) and \(e^{-1}\).

In this section, we establish three results. First, we separate the literal orientation
given in \cite[Open Problem~6.7]{AG13} from the usual contractive
interpretation naturally suggested by EVI theory. We then show that both the literal
forward-nondecreasing inequality and the forward contractive interpretation fail
in the original finite-measure \(W_{b,2}\) metric, but they do so for different reasons. Finally, we describe the obstruction to any standard finite-\(\lambda\)
\(\mathrm{EVI}_\lambda\)-semigroup formulation, where the restriction to \(\mathcal X_c\) is the affine constant-boundary Dirichlet heat flow.

Below, a standard \(\mathrm{EVI}_\lambda\) semigroup means a metric-space EVI flow in the usual sense. We use the following standard fact: for each \(t>0\), there exists a finite constant \(C_{\lambda,t}\)
such that
\[
d(T_t x,T_t y)\le C_{\lambda,t}\,d(x,y)
\qquad\text{for all }x,y
\]
in the EVI domain. Following a common convention, we may take
\(C_{\lambda,t}=e^{-\lambda t}\). It follows that any such realisation on a
\(W_{b,2}\)-metric domain, which contains \(\mathcal X_c\) and agrees there with
the affine Dirichlet heat flow, would necessarily imply a finite fixed-time \(W_{b,2}\)-Lipschitz bound for \(S_t^c:\mathcal X_c\to\mathcal M(\Omega)\).

We consider two distinct monotonicity questions. First, we note that the inequality from \cite[Open Problem~6.7]{AG13} has the forward-nondecreasing orientation
\[
   W_{b,2}\bigl(\rho^1(s,\cdot)\,dx,\rho^2(s,\cdot)\,dx\bigr)
   \le
   W_{b,2}\bigl(\rho^1(t,\cdot)\,dx,\rho^2(t,\cdot)\,dx\bigr),
   \qquad t>s.
\]
In Remark~\ref{rem:literal-AG-direction}, we disprove this orientation. We note that the usual EVI/contractive interpretation would instead require a finite forward Lipschitz estimate. In Corollaries~\ref{cor:p2-positive-time} and~\ref{cor:noEVI}, we rule out that interpretation in the original finite-measure \(W_{b,2}\) metric.

\begin{lemma}[A total-variation bound for \texorpdfstring{$W_{b,2}$}{Wb,2}]\label{lem:Wb2-TV}
For all finite nonnegative measures \(\mu,\nu\in\mathcal M(\Omega)\),
\[
W_{b,2}(\mu,\nu)^2\le D_\Omega^2\,\|\mu-\nu\|_{\mathrm{TV}}.
\]
\end{lemma}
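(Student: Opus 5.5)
The plan is to build an explicit admissible plan from the Jordan decomposition of \(\mu-\nu\). Write \(\sigma:=\mu\wedge\nu\) for the common part, so that
\[
\mu=\sigma+(\mu-\nu)^+,\qquad \nu=\sigma+(\mu-\nu)^-,\qquad \|\mu-\nu\|_{\mathrm{TV}}=(\mu-\nu)^+(\Omega)+(\mu-\nu)^-(\Omega).
\]
Here \(\sigma\) is the nonnegative measure \(\mu-(\mu-\nu)^+\). It is nonnegative because \((\mu-\nu)^+\le\mu\), which one checks on a Hahn decomposition. Fix any \(b_0\in\partial\Omega\), as in Lemma~\ref{lem:adm-nonempty}, and take
\[
\gamma:=(\mathrm{Id},\mathrm{Id})_\#\sigma+(\mathrm{Id},b_0)_\#(\mu-\nu)^++(b_0,\mathrm{Id})_\#(\mu-\nu)^-.
\]
The common mass stays where it is, and the excess of each measure is sent to or taken from the boundary reservoir.

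The first step is admissibility. For a Borel set \(E\subset\Omega\), the point \(b_0\notin E\), so the first marginal of \(\gamma\) on \(E\) is \(\sigma(E)+(\mu-\nu)^+(E)=\mu(E)\). In the same way the second marginal on \(E\) is \(\sigma(E)+(\mu-\nu)^-(E)=\nu(E)\). This is exactly the computation in the proof of Lemma~\ref{lem:adm-nonempty}, so \(\gamma\in\Adm(\mu,\nu)\).

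The second step is the cost. The diagonal part costs nothing, since \(|x-x|^2=0\). The other two parts cost at most \(D_\Omega^2\) per unit mass, because \(|x-b_0|\le D_\Omega\) for \(x\in\overline\Omega\). Hence
\[
W_{b,2}(\mu,\nu)^2\le\int|x-y|^2\,d\gamma\le D_\Omega^2\bigl((\mu-\nu)^+(\Omega)+(\mu-\nu)^-(\Omega)\bigr)=D_\Omega^2\,\|\mu-\nu\|_{\mathrm{TV}}.
\]

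The only point needing care is that \(\sigma\ge0\) and that \(\mu\) and \(\nu\) decompose as stated. Both follow from the Hahn–Jordan decomposition: if \(P\) is a positive set for \(\mu-\nu\), then \(\sigma=\nu\) on \(P\) and \(\sigma=\mu\) on \(\Omega\setminus P\). Apart from this, the argument is routine. The same plan gives \(W_{b,p}^p\le D_\Omega^p\|\mu-\nu\|_{\mathrm{TV}}\) for every \(p\).
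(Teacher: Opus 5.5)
Your proposal is correct and follows essentially the same route as the paper: keep the common part $\mu\wedge\nu$ on the diagonal and send the two excesses to and from a fixed boundary point $b_0$, so that only $\|\mu-\nu\|_{\mathrm{TV}}$ units of mass move, each at cost at most $D_\Omega^2$. The only difference is presentational: you obtain the common part via a Hahn--Jordan decomposition of $\mu-\nu$, while the paper writes it as $\min\{f,g\}\,\lambda$ using densities with respect to $\lambda=\mu+\nu$.
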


\begin{proof}
We write \(\mu=f\,\lambda\), \(\nu=g\,\lambda\) with \(\lambda:=\mu+\nu\), and set
\[
m:=\min\{f,g\}\,\lambda.
\]
We further fix \(b_0\in\partial\Omega\) and define
\[
\begin{aligned}
\gamma:=\,&(\mathrm{Id},\mathrm{Id})_\# m
+(\mathrm{Id},b_0)_\#(\mu-m)\\
&+(b_0,\mathrm{Id})_\#(\nu-m).
\end{aligned}
\]
This implies \(\gamma\in\Adm(\mu,\nu)\). Moreover,
\[
\int_{\overline\Omega\times\overline\Omega}|x-y|^2\,d\gamma(x,y)
\le D_\Omega^2\bigl((\mu-m)(\Omega)+(\nu-m)(\Omega)\bigr).
\]
Since
\[
(\mu-m)(\Omega)+(\nu-m)(\Omega)=\|\mu-\nu\|_{\mathrm{TV}},
\]
it suffices to take the infimum over admissible plans to obtain the claim.
\end{proof}

\begin{remark}[The orientation in Ambrosio--Gigli's Open Problem~6.7]\label{rem:literal-AG-direction}
The inequality given in \cite[Open Problem~6.7]{AG13}, interpreted with the
orientation written there, already fails for smooth data. More precisely, we fix \(c\ge0\), and, in particular, \(c=e^{-1}\). We choose
\(\psi\in C_c^\infty(\Omega)\), \(\psi\ge0\), \(\psi\not\equiv0\), and set
\[
   \rho_0^1:=c+\psi,
   \qquad
   \rho_0^2:=c.
\]
It follows that \(\rho_0^1,\rho_0^2\in\mathcal D_c\). Let
\[
   \rho^1(t,\cdot):=c+P_t\psi,
   \qquad
   \rho^2(t,\cdot):=c.
\]
These are classical solutions of the Dirichlet heat equation with boundary
value \(c\). We define
\[
   d(t):=
   W_{b,2}\bigl(\rho^1(t,\cdot)\,dx,\rho^2(t,\cdot)\,dx\bigr)
   =
   W_{b,2}\bigl((c+P_t\psi)\,dx,c\,dx\bigr).
\]
For every \(s>0\), we have \(P_s\psi\not\equiv0\), so the two measures are
distinct and, therefore, \(d(s)>0\), due to the fact that \(W_{b,2}\) is a metric.

On the other hand, \(d(t)\to0\) as \(t\to\infty\). Indeed, by
Lemma~\ref{lem:Wb2-TV}, it holds that
\[
   d(t)^2
   \le
   D_\Omega^2\,\|P_t\psi\|_{L^1(\Omega)}.
\]
Let \(\lambda_1>0\) be the first Dirichlet eigenvalue of \(-\Delta\) on
\(\Omega\). By the Dirichlet spectral gap,
\[
   \|P_t\psi\|_{L^1(\Omega)}
   \le
   |\Omega|^{1/2}\|P_t\psi\|_{L^2(\Omega)}
   \le
   |\Omega|^{1/2}e^{-\lambda_1 t}\|\psi\|_{L^2(\Omega)}
   \longrightarrow0 .
\]
It follows that \(d(t)\to0\). If we fix an arbitrary \(s>0\) and then choose \(t>s\) large enough, we obtain
\[
   W_{b,2}\bigl(\rho^1(s,\cdot)\,dx,\rho^2(s,\cdot)\,dx\bigr)
   >
   W_{b,2}\bigl(\rho^1(t,\cdot)\,dx,\rho^2(t,\cdot)\,dx\bigr),
\]
which disproves the forward nondecreasing inequality
\[
   W_{b,2}\bigl(\rho^1(s,\cdot)\,dx,\rho^2(s,\cdot)\,dx\bigr)
   \le
   W_{b,2}\bigl(\rho^1(t,\cdot)\,dx,\rho^2(t,\cdot)\,dx\bigr),
   \qquad t>s,
\]
in the given orientation.
\end{remark}

\begin{corollary}[Amplification on \(\mathcal D_c\)]\label{cor:p2-twopoint}
Let us fix $c\ge0$. For every \(L>0\) and every \(t>0\), there exist
\(\rho_0^1,\rho_0^2\in \mathcal D_c\), with \(\rho_0^i\ge c\) on \(\overline\Omega\) and
\[
W_{b,2}\bigl(\rho_0^1\,dx,\rho_0^2\,dx\bigr)>0,
\]
such that the respective solutions satisfy
\[
W_{b,2}\bigl((S_t\rho_0^1)\,dx,(S_t\rho_0^2)\,dx\bigr)
\ge L\,W_{b,2}\bigl(\rho_0^1\,dx,\rho_0^2\,dx\bigr).
\]
In particular, for every \(t>0\), the affine Dirichlet time-\(t\) map \(S_t:\mathcal X_c\to\mathcal M(\Omega)\) is not contractive in \(W_{b,2}\).
\end{corollary}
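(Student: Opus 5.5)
This corollary is the special case \(p=2\), \(\mathcal L=\Delta\) of Theorem~\ref{thm:infinite-lip-Wbp-c-ell}. Divergence-form operators with \(A=I\) and \(b=0\) satisfy the standing hypotheses with \(\lambda=\Lambda=1\). In that case \(P_t^{\mathcal L}=P_t\) and \(S_t^{\mathcal L}=S_t=S_t^c\). So the plan is mainly to check that the identifications are legitimate, and then to read off the two-point witnesses.

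First, I will check that the \(\mathcal L=\Delta\) objects coincide with those of Section~\ref{subsec:dirichlet-semigroups}. The \(L^1\)-density realisation of \(P_t^{\Delta}\) used in Section~\ref{sec:ell-robustness} agrees with the killed heat semigroup \(P_t\). Indeed, both come from the same \(L^2\) Dirichlet realisation extended to \(L^1\), and Lemma~\ref{lem:duality-L1-C0} identifies this with the dual action on measures. Consequently \(S_t\rho_0=c+P_t(\rho_0-c)\) agrees with \(S_t^{\mathcal L}\rho_0\). Alternatively, one can bypass the general theorem by rerunning its proof with Proposition~\ref{prop:amplification-packet-ell} in place of Proposition~\ref{prop:amplification-packet-uniform-ell}.

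Second, I will reproduce the construction concretely for \(p=2\). Fix \(L>0\) and \(t>0\). Take the packets \(u_0^\varepsilon\) from Proposition~\ref{prop:amplification-packet-ell}, and set \(\rho_0^2\equiv c\) and \(\rho_0^1=c+Mu_0^\varepsilon\). Both functions lie in \(\mathcal D_c\), satisfy \(\rho_0^i\ge c\), and equal \(c\) on \(\{\delta<\varepsilon\}\). The plan transporting \(Mu_0^\varepsilon\) to \(\widehat\Pi\) gives the upper bound
\[
W_{b,2}(\rho_0^1\,dx,\rho_0^2\,dx)\le 2M^{1/2}\varepsilon .
\]
Positivity is needed for the distance to be nonzero. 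Since \(u_0^\varepsilon\not\equiv0\), the two measures differ, and Lemma~\ref{lem:Wbp-metric} then gives \(W_{b,2}(\rho_0^1\,dx,\rho_0^2\,dx)>0\).

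Third, I will bound the distance at time \(t\) from below. By the triangle inequality through \(0\) and Lemma~\ref{lem:tozero},
\[
W_{b,2}\bigl((S_t\rho_0^1)\,dx,(S_t\rho_0^2)\,dx\bigr)\ge (E_c+MB_\varepsilon)^{1/2}-E_c^{1/2},
\]
where \(B_\varepsilon\ge\alpha\varepsilon\). Now choose \(M\ge E_c/(\alpha\varepsilon)\). The monotonicity of \(F_y\) then yields the lower bound \((\sqrt2-1)(M\alpha\varepsilon)^{1/2}\). Dividing by the initial distance gives a ratio of at least \(\tfrac{(\sqrt2-1)\alpha^{1/2}}{2}\varepsilon^{-1/2}\). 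Choosing \(\varepsilon\) small makes this at least \(L\).

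Non-contractivity then follows by taking any \(L>1\). The only real obstacle is the bookkeeping in the first step: making sure the density-induced \(P_t^{\Delta}\) and the measure-valued \(P_t\) coincide on \(L^1\) data. Lemma~\ref{lem:duality-L1-C0} settles this. Everything else is inherited verbatim from the earlier results.
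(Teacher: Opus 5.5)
Your proposal is correct and matches the paper's proof, which simply specialises Theorem~\ref{thm:infinite-lip-Wbp-c-ell} to \(\mathcal L=\Delta\), \(p=2\), and takes \(L>1\) to rule out contractivity. Your additional checks are consistent with that argument: that \(P_t^{\Delta}\) agrees with \(P_t\) on the data \(\rho_0-c\in C_c^\infty(\Omega)\), that the initial distance is positive by Lemma~\ref{lem:Wbp-metric}, and your explicit \(p=2\) rerun (choosing \(\varepsilon\) first, then \(M\)).
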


\begin{proof}
It suffices to apply Theorem~\ref{thm:infinite-lip-Wbp-c-ell} with \(\mathcal L=\Delta\) and \(p=2\), and to take \(L>1\) to show that contractivity fails.
\end{proof}

\begin{corollary}[No forward nonincreasing monotonicity along positive-time trajectories]
\label{cor:p2-positive-time}
We fix \(c\ge0\). There exist classical solutions \(\rho^1,\rho^2\) of the Dirichlet heat
equation in \(\Omega\) with boundary value \(c\), and times \(0<s<t\), such that
\[
W_{b,2}\bigl(\rho^1(t,\cdot)\,dx,\rho^2(t,\cdot)\,dx\bigr)
>
W_{b,2}\bigl(\rho^1(s,\cdot)\,dx,\rho^2(s,\cdot)\,dx\bigr).
\]
It follows that the map \(r\mapsto W_{b,2}\bigl(\rho^1(r,\cdot)\,dx,\rho^2(r,\cdot)\,dx\bigr)\) need not be nonincreasing on \((0,\infty)\). For the boundary value \(c=e^{-1}\), this gives the negative example to the contractive, forward-nonincreasing interpretation of the Ambrosio--Gigli question. Corollary~\ref{cor:noEVI} gives the stronger exclusion of every standard \(\mathrm{EVI}_\lambda\)-semigroup interpretation with finite \(\lambda\).
\end{corollary}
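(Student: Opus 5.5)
Combine the boundary amplification of Corollary~\ref{cor:p2-twopoint} with the semigroup property and the decay of the flow. The aim is two solutions whose $W_{b,2}$-distance is small at some positive time $s$ and larger at a later time $t$. The simplest construction reuses the two-point amplification from Theorem~\ref{thm:infinite-lip-Wbp-c-ell}, but started at a positive time rather than at time $0$.

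Concretely, fix $c\ge0$ and $h:=t-s>0$ (say $h=1$). Apply Corollary~\ref{cor:p2-twopoint} with $L=2$ and time $h$ to obtain $\rho_0^1=c+Mu_0^\varepsilon$ and $\rho_0^2\equiv c$ in $\mathcal D_c$ with
\[
W_{b,2}\bigl((S_h\rho_0^1)\,dx,(S_h\rho_0^2)\,dx\bigr)\ge 2\,W_{b,2}\bigl(\rho_0^1\,dx,\rho_0^2\,dx\bigr)>0 .
\]
These data live at time $0$, not at a positive time $s$, so I would pass to positive times by continuity. Set $\rho^i(r):=S_r\rho_0^i$. Then $\rho^2\equiv c$ is stationary, and $\rho^1(r)=c+MP_ru_0^\varepsilon$ is a classical solution with boundary value $c$ for $r>0$ (indeed for $r\ge0$, since the datum is smooth and equals $c$ near $\partial\Omega$). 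Consider $f(r):=W_{b,2}(\rho^1(r)\,dx,\rho^2(r)\,dx)=W_{b,2}((c+MP_ru_0^\varepsilon)\,dx,c\,dx)$. By Lemma~\ref{lem:Wb2-TV} and the triangle inequality,
\[
|f(r)-f(0)|^2\le D_\Omega^2 M\|P_ru_0^\varepsilon-u_0^\varepsilon\|_{L^1},
\]
which tends to $0$ as $r\downarrow0$ by strong continuity of $P_r$ on $L^1$. The same estimate gives continuity of $f$ at $r=h$. Hence, for small $s>0$, we have $f(s)<\tfrac32 f(0)$ and $f(s+h)>\tfrac32 f(0)$, so $f(t)>f(s)$ with $t=s+h$. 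Since $f(0)>0$, this gives the strict inequality.

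The main obstacle is only to check that the curves are genuine classical solutions on $(0,\infty)$ and that the TV-continuity argument is legitimate. Both follow from the $L^1$ strong continuity of the Dirichlet semigroup and from $u_0^\varepsilon\in C_c^\infty(\Omega)$. The conclusion for $c=e^{-1}$ is then just the specialisation.
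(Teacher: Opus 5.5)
Your proposal is correct and follows essentially the same route as the paper. You use Corollary~\ref{cor:p2-twopoint} with some $L>1$ to get $f(h)\ge L f(0)>f(0)$, then the triangle inequality, Lemma~\ref{lem:Wb2-TV} and strong $L^1$-continuity of $P_r$ to get $f(r)\to f(0)$ as $r\downarrow0$, and then choose $s$ small. The only difference is that you take $t=s+h$, which also requires continuity at $r=h$; the paper keeps $t=\tau$ fixed and needs continuity only at $r=0$.
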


\begin{proof}
We fix \(\tau>0\) and choose \(L>1\). By Corollary~\ref{cor:p2-twopoint}, there exist
\(\rho_0^1,\rho_0^2\in\mathcal D_c\) with
\[
W_{b,2}\bigl(\rho_0^1\,dx,\rho_0^2\,dx\bigr)>0
\]
such that, if we write,
\[
\rho^i(r,\cdot):=S_r\rho_0^i,
\qquad
d(r):=W_{b,2}\bigl(\rho^1(r,\cdot)\,dx,\rho^2(r,\cdot)\,dx\bigr),
\]
we obtain
\[
d(\tau)\ge L\,d(0)>d(0).
\]
We claim that \(d(r)\to d(0)\) as \(r\downarrow0\). Indeed, by the triangle inequality and Lemma~\ref{lem:Wb2-TV},
\[
\begin{aligned}
|d(r)-d(0)|
&\le W_{b,2}\bigl(\rho^1(r,\cdot)\,dx,\rho_0^1\,dx\bigr)
   +W_{b,2}\bigl(\rho^2(r,\cdot)\,dx,\rho_0^2\,dx\bigr)\\
&\le D_\Omega\sum_{i=1}^2
   \|\rho^i(r,\cdot)-\rho_0^i\|_{L^1(\Omega)}^{1/2}.
\end{aligned}
\]
Because
\[
\rho^i(r,\cdot)=c+P_r(\rho_0^i-c)
\]
and \(P_r\) is strongly continuous on \(L^1(\Omega)\), the right-hand side tends to
\(0\) as \(r\downarrow0\), which implies \(d(r)\to d(0)\). Since \(d(\tau)>d(0)\), we may choose \(s\in(0,\tau)\) so small that \(d(s)<d(\tau)\). The claim follows by setting \(t:=\tau\).
\end{proof}

We now discuss the exclusion of standard \(\mathrm{EVI}_\lambda\)-semigroup realisations. In the argument, we use the fixed-time Lipschitz result given at the beginning of this section. In the following statement, \(D(\mathcal E)\) is endowed with the metric obtained by the restriction of \(W_{b,2}\) to \(D(\mathcal E)\).

\begin{corollary}[No standard \(\mathrm{EVI}_\lambda\)-semigroup interpretation on the finite Figalli--Gigli subspace]\label{cor:noEVI}
We fix \(c\ge0\) and \(\lambda\in\mathbb R\). There does not exist a functional
\(\mathcal E\) and a semigroup \(T_t:D(\mathcal E)\to D(\mathcal E)\) that would satisfy the standard metric-space \(\mathrm{EVI}_\lambda\) property on \(D(\mathcal E)\), endowed with the restricted \(W_{b,2}\)-metric, and also satisfy the fixed-time Lipschitz result, with
\[
\{\rho\,dx:\rho\in\mathcal D_c\}\subset D(\mathcal E),
\]
such that
\[
T_t(\rho\,dx)=(S_t\rho)\,dx
\qquad \forall \rho\in\mathcal D_c,\ \forall t\ge0 .
\]
\end{corollary}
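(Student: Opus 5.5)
The plan is to argue by contradiction and to reduce everything to the amplification result already proved, namely Corollary~\ref{cor:p2-twopoint} (Theorem~\ref{thm:infinite-lip-Wbp-c-ell} with \(\mathcal L=\Delta\), \(p=2\)). Suppose that a functional \(\mathcal E\) and a semigroup \(T_t:D(\mathcal E)\to D(\mathcal E)\) exist with the listed properties. In particular, \(\mathcal X_c\subset D(\mathcal E)\) and \(T_t=S_t\) on \(\mathcal X_c\). The semigroup also satisfies the fixed-time Lipschitz property recalled before the statement: for each \(t>0\) there is a finite constant \(C_{\lambda,t}\), which may be taken to be \(e^{-\lambda t}\), such that
\[
W_{b,2}(T_t\mu,T_t\nu)\le C_{\lambda,t}\,W_{b,2}(\mu,\nu)\qquad\forall\mu,\nu\in D(\mathcal E).
\]
This bound follows from the standard EVI contraction estimate, and in any case it is built into the hypotheses of the corollary.

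First fix any \(t>0\), for instance \(t=1\). Restrict the Lipschitz bound to \(\mu=\rho^1_0\,dx\) and \(\nu=\rho^2_0\,dx\) with \(\rho^i_0\in\mathcal D_c\). Because \(T_t\) agrees with \(S_t\) there, this gives \(W_{b,2}((S_t\rho^1_0)\,dx,(S_t\rho^2_0)\,dx)\le C_{\lambda,t}W_{b,2}(\rho^1_0\,dx,\rho^2_0\,dx)\). Equivalently, \(\Lip_{W_{b,2}}(S_t;\mathcal X_c)\le C_{\lambda,t}<\infty\).

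Next choose \(L:=C_{\lambda,t}+1\) and apply Corollary~\ref{cor:p2-twopoint}. It produces \(\rho^1_0,\rho^2_0\in\mathcal D_c\) with strictly positive initial distance and
\[
W_{b,2}((S_t\rho^1_0)\,dx,(S_t\rho^2_0)\,dx)\ge L\,W_{b,2}(\rho^1_0\,dx,\rho^2_0\,dx).
\]
Combining this with the Lipschitz bound gives \(L\,d_0\le C_{\lambda,t}d_0\) with \(d_0>0\), hence \(C_{\lambda,t}+1\le C_{\lambda,t}\), a contradiction. Equivalently, \(\Lip_{W_{b,2}}(S_t;\mathcal X_c)=\infty\), as already stated in Theorem~\ref{thm:infinite-lip-Wbp-c-ell}.

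Two points need care. The first is that the metric on \(D(\mathcal E)\) is the restriction of \(W_{b,2}\), so the distances in the EVI contraction are exactly the \(W_{b,2}\)-distances used in the corollary; no comparison between metrics is needed. The second is that the target values \((S_t\rho^i_0)\,dx\) are \(T_t(\rho^i_0\,dx)\), which lie in \(D(\mathcal E)\) by invariance, so membership in \(\mathcal X_c\) is never required. This matters because \(\mathcal X_c\) is not \(S_t\)-invariant. The only genuinely nontrivial input is the fixed-time Lipschitz consequence of \(\mathrm{EVI}_\lambda\). Since the statement explicitly assumes it, I will simply invoke it, and note that it holds with \(C_{\lambda,t}=e^{-\lambda t}\) for any finite \(\lambda\).
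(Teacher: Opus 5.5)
Your proposal is correct and follows essentially the same route as the paper: the fixed-time \(\mathrm{EVI}_\lambda\) Lipschitz bound, restricted to \(\mathcal X_c\) where \(T_t=S_t\), is contradicted by the amplification pairs of Corollary~\ref{cor:p2-twopoint} once \(L>C_{\lambda,t}\). Your explicit use of the strictly positive initial distance to turn \(L\,d_0\le C_{\lambda,t}d_0\) into a contradiction is the step the paper leaves implicit.
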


Notice that for \(c\) equal to the entropy-selected boundary constant, this gives the obstruction to a standard \(\mathrm{EVI}_\lambda\) interpretation of the Figalli--Gigli heat flow. For other \(c\), it is an obstruction for the affine constant-boundary heat flow \(S_t^c\).

\begin{proof}
Indeed, an \(\mathrm{EVI}_\lambda\) flow satisfies a finite fixed-time
Lipschitz estimate. For every \(t>0\), there exists \(C_{\lambda,t}<\infty\) such
that
\[
W_{b,2}(T_t x,T_t y)\le C_{\lambda,t}W_{b,2}(x,y)
\qquad\forall x,y\in D(\mathcal E).
\]
This is the standard EVI contraction estimate (see
\cite[Chapter~4]{AGS08}, and \cite[Proposition~3.1]{San17} for an
overview). For each \(t_0>0\) and every
\(\rho,\sigma\in\mathcal D_c\),
\[
\begin{multlined}
W_{b,2}\bigl((S_{t_0}\rho)\,dx,(S_{t_0}\sigma)\,dx\bigr)
=
W_{b,2}\bigl(T_{t_0}(\rho\,dx),T_{t_0}(\sigma\,dx)\bigr)\\
\le
C_{\lambda,t_0}\,W_{b,2}(\rho\,dx,\sigma\,dx).
\end{multlined}
\]
We choose \(L>C_{\lambda,t_0}\). By Corollary~\ref{cor:p2-twopoint}, we have
\(\rho,\sigma\in\mathcal D_c\) for which
\[
W_{b,2}\bigl((S_{t_0}\rho)\,dx,(S_{t_0}\sigma)\,dx\bigr)
\ge L\,W_{b,2}(\rho\,dx,\sigma\,dx),
\]
which contradicts the previous \(C_{\lambda,t_0}\)-Lipschitz estimate.
\end{proof}

\section{Conclusion}
The sharp modulus theorem shows that the boundary reservoir has two distinct
fixed-time regimes: a Lipschitz endpoint at \(p=1\), determined by
Kantorovich--Rubinstein duality, and a genuinely nonlinear \(1/p\)-H\"older
regime for every \(p>1\), governed by Hopf amplification of boundary layers. In
the quadratic Figalli--Gigli case, this identifies the obstruction to EVI-type contractivity in the finite-measure metric.

\appendix

\section{Proof of the Hopf lower bound}\label{app:hopf-detail}
In this appendix, we analyse in more detail the barrier argument from Lemma~\ref{lem:hopf-linear-general}. The argument is the parabolic Hopf boundary point lemma for a uniformly parabolic operator with smooth coefficients. Note that we allow the operator to be nonsymmetric and we use the inward normal convention. For regularity, we apply the global \(W^{2,1}_r\) Dirichlet estimate on \(C^2\) cylinders for linear uniformly parabolic equations with smooth coefficients and bounded lower-order terms. We rewrite
\[
\nabla\!\cdot(A\nabla u)+b\cdot\nabla u+qu
=
\sum_{i,j}a_{ij}\partial_{ij}u
+\sum_i\Bigl(\sum_j\partial_j a_{ji}+b_i\Bigr)\partial_i u
+qu.
\]
We may now apply the standard global estimates and the maximum principle for uniformly parabolic operators. We could also invoke the classical parabolic Hopf boundary lemma for \(e^{-Kt}u\) after choosing \(K\ge \|q\|_\infty\).

\begin{proof}[Proof of Lemma~\ref{lem:hopf-linear-general}]
Let us fix a connected component $\Omega'\subset\Omega$ with $\eta\not\equiv0$ on $\Omega'$, and write $\mathbf n$ for the inward unit normal on $\partial\Omega'$. We first obtain positive-time \(W_r^{2,1}\)-regularity and thus
\(C^1\)-regularity up to the boundary at time \(t_0\). We then apply the parabolic strong maximum principle and a common barrier argument to get a strict inward normal derivative on \(\partial\Omega'\). The latter integrates along the tubular normal coordinates into the linear lower bound we are considering.

Let us fix \(\tau\in(0,t_0/2)\), and choose \(\chi\in C_c^\infty((t_0-\tau,t_0+\tau))\)
such that \(0\le\chi\le1\) and \(\chi\equiv1\) on
\([t_0-\tau/2,t_0+\tau/2]\). Because \(\eta\in L^\infty(\Omega')\) holds, the maximum
principle gives \(w\in L^\infty((0,t_0+\tau)\times\Omega')\). We set
\[
u(t,x):=\chi(t)w(t,x).
\]
It follows that \(u\) is a distributional solution on \((t_0-\tau,t_0+\tau)\times\Omega'\) of the corresponding homogeneous Dirichlet problem with a zero initial datum at \(t=t_0-\tau\) and a right-hand side of \((\partial_t\chi)w\). Given that \(w\in L^\infty\), this right-hand side is in \(L^r((t_0-\tau,t_0+\tau)\times\Omega')\) for every \(1<r<\infty\). By Lemma~\ref{lem:positive-time-regularity}, which we apply to this cutoff solution, we obtain
\[
u\in W_r^{2,1}\bigl((t_0-\tau,t_0+\tau)\times\Omega'\bigr)
\qquad\text{for every }1<r<\infty.
\]
By the standard energy or duality uniqueness for the zero-initial Cauchy--Dirichlet problem in \(L^r\), this \(W^{2,1}_r\)-representative coincides with the semigroup solution. In particular, the same holds on the smaller slab \((t_0-\tau/2,t_0+\tau/2)\times\Omega'\). Let us choose \(r>n+2\). By the parabolic Morrey embedding in the \(W^{2,1}_r\)-scale, we obtain
\[
\nabla_x u\in C^{\alpha,\alpha/2}
\bigl([t_0-\tau/2,t_0+\tau/2]\times\overline{\Omega'}\bigr)
\]
for some \(\alpha\in(0,1)\). In other terms, \(u\) is spatially \(C^{1+\alpha}\) and temporally Hölder continuous on the smaller slab. In particular, for every \(t\in[t_0-\tau/2,t_0+\tau/2]\), the slice \(u(t,\cdot)\) belongs to \(C^1(\overline{\Omega'})\), and the map \(t\mapsto \nabla_x u(t,\cdot)\) is continuous with values in
\(C(\overline{\Omega'})\). Given that \(u=w\) on \([t_0-\tau/2,t_0+\tau/2]\times\Omega'\), the same conclusion holds for \(w\). It follows that \(w(t_0,\cdot)\in C^1(\overline{\Omega'})\). We set
\[
K:=\max\{0,\|q\|_{L^\infty(\Omega)}\},
\qquad
\widetilde w(t,x):=e^{-Kt}w(t,x).
\]
This implies that \(\widetilde w\) solves
\[
\partial_t\widetilde w
=\nabla\cdot(A\nabla\widetilde w)+b\cdot\nabla\widetilde w+(q-K)\widetilde w,
\qquad
\widetilde w|_{\partial\Omega'}=0,
\]
with a zero-order coefficient \(q-K\le0\). Let us now rewrite the principal part in nondivergence form,
\[
\partial_t\widetilde w
=\sum_{i,j=1}^n a_{ij}(x)\partial_{ij}\widetilde w
+\sum_{i=1}^n \widetilde b_i(x)\partial_i\widetilde w
+(q-K)\widetilde w,
\qquad \widetilde b_i:=\sum_{j=1}^n \partial_j a_{ji}+b_i.
\]
Because \(D^2\widetilde w\) is symmetric,
\[
\sum_{i,j=1}^n a_{ij}\partial_{ij}\widetilde w
=
\sum_{i,j=1}^n (A_s)_{ij}\partial_{ij}\widetilde w .
\]
The above implies that we may take the principal coefficient in the nondivergence operator to be the uniformly elliptic symmetric matrix \(A_s\). It follows that \(\widetilde w\) is a strong solution of a uniformly parabolic nondivergence equation on \((t_0-\tau/2,t_0+\tau/2)\times\Omega'\) with smooth bounded coefficients and a zero-order term \(q-K\le 0\). By the previously shown \(W_r^{2,1}\)-regularity (for every \(r<\infty\)) and the choice of \(r>n+2\), we obtain
\[
\nabla_x\widetilde w\in
C\bigl([t_0-\tau/2,t_0+\tau/2]\times\overline{\Omega'}\bigr).
\]
We set
\[
L=-\partial_t+\sum_{i,j}a_{ij}\partial_{ij}
+\sum_i\widetilde b_i\partial_i+(q-K).
\]
On every cylinder \((0,T]\times\Omega'\), we apply the forward maximum principle to \(h=-\widetilde w\). For the initial datum, we have \(h(0,\cdot)=-\eta\le0\). Together with the zero-order coefficient \(q-K\le0\) and the zero lateral boundary value, we obtain
\[
\widetilde w\ge0
\qquad\text{on }[0,T]\times\Omega' .
\]
Since \(\eta\ge0\) is not identically zero on the connected component
\(\Omega'\), by the parabolic strong maximum principle, we obtain
\[
\widetilde w>0
\qquad\text{in }(0,T]\times\Omega'
\]
for every \(T>0\). If we take \(T=t_0+\tau\), we notice that \(-\widetilde w\) attains a strict boundary maximum \(0\) at each \((t_0,\xi)\in (t_0-\tau,t_0+\tau)\times\partial\Omega'\). Let us fix \(\xi\in\partial\Omega'\). Because \(\partial\Omega'\) is \(C^2\), the interior ball condition holds at \(\xi\). More precisely, there exist \(R>0\) and \(y=\xi+R\mathbf n(\xi)\in\Omega'\) such that
\[
B_R(y)\subset\Omega',
\qquad
\overline{B_R(y)}\cap\partial\Omega'=\{\xi\}.
\]
We may assume \(R^2<\tau/2\) after we shrink \(R\), if necessary. Next, we set
\[
PF:=\bigl\{(t,x): |x-y|^2+(t_0-t)<R^2,\ t<t_0\bigr\}
\subset (t_0-\tau/2,t_0)\times\Omega'
\]
and define \(v:=-\widetilde w\). It follows that
\[
Lv:=-\partial_t v+\sum_{i,j=1}^n a_{ij}(x)\partial_{ij}v
+\sum_{i=1}^n \widetilde b_i(x)\partial_i v +(q-K)v =0
\quad\text{a.e. in }PF,
\]
with uniformly parabolic bounded coefficients and a zeroth-order term
\(q-K\le 0\). Moreover,
\[
v(t_0,\xi)=0,
\qquad
v<0\ \text{in }PF,
\]
and by regularity we established above, we have
\[
v\in W_r^{2,1}(PF)\cap C(\overline{PF}),
\qquad
\nabla_x v\in C(\overline{PF})
\]
for \(r>n+2\).

Let
\[
\zeta(t,x):=e^{-\alpha(|x-y|^2+(t_0-t))}-e^{-\alpha R^2}
\]
and
\[
P:=PF\cap\{|x-y|>R/2\}.
\]
Let \(0<\lambda\le\Lambda\) denote ellipticity bounds for \(A\) on
\(\overline{\Omega'}\). For \(z:=x-y\), by direct computation, we obtain
\[
-\partial_t\zeta=-\alpha e^{-\alpha(|z|^2+(t_0-t))},
\qquad
\partial_i\zeta=-2\alpha z_i e^{-\alpha(|z|^2+(t_0-t))},
\]
and
\[
\partial_{ij}\zeta=
\bigl(-2\alpha\delta_{ij}+4\alpha^2 z_i z_j\bigr)
e^{-\alpha(|z|^2+(t_0-t))}.
\]
It follows that
\[
\begin{aligned}
L\zeta
&=
e^{-\alpha(|z|^2+(t_0-t))}
\Bigl(
4\alpha^2\langle A_s(x)z,z\rangle
-\alpha
-2\alpha\,\mathrm{tr}\,A(x)
-2\alpha\,\widetilde b(x)\cdot z
\Bigr)\\
&\quad +(q(x)-K)\zeta.
\end{aligned}
\]
On \(P\), we have \(|z|>R/2\) and \(|z|<R\), so
\[
\langle A_s(x)z,z\rangle\ge \lambda |z|^2\ge \lambda R^2/4,
\qquad
|z|\le R,
\qquad
0\le \zeta\le e^{-\alpha(|z|^2+(t_0-t))}.
\]
Therefore,
\[
L\zeta
\ge
e^{-\alpha(|z|^2+(t_0-t))}
\Bigl(
\lambda R^2\alpha^2
-\alpha\bigl(1+2n\Lambda+2R\|\widetilde b\|_{L^\infty(\Omega')}\bigr)
-\|q-K\|_{L^\infty(\Omega')}
\Bigr).
\]
We now choose \(\alpha>0\) sufficiently large, so that it depends only on the coefficient bounds, the ellipticity constants, and \(R\). This yields
\[
L\zeta>0
\qquad\text{in }P.
\]
We apply the forward maximum principle in the following alternative form. To this end, set
\[
\mathscr P:=-L
=\partial_t-\sum_{i,j}a_{ij}\partial_{ij}
-\sum_i\widetilde b_i\partial_i+(K-q).
\]
Since \(K-q\ge0\), the standard parabolic maximum principle for \(\mathscr P\)
states that if \(\mathscr P h\le0\) in \(P\) and \(h\le0\) on the lower-time/lateral parabolic boundary of \(P\), then \(h\le0\) in \(P\). In other words, if \(Lh\ge0\) in \(P\), then the same conclusion holds. Note that the terminal face \(\overline P\cap\{t=t_0\}\) is not part of the parabolic boundary we prescribe. Let us now apply this maximum principle. Since \(Lv=0\) and \(L\zeta>0\) in \(P\), we have
\[
L(v+\varepsilon\zeta)>0
\qquad\text{in }P.
\]
The relevant parabolic boundary for this operator is the initial/lateral boundary of \(P\). Moreover, it lies in
\[
\Sigma_{\rm out}:=\overline P\cap
\bigl\{|x-y|^2+(t_0-t)=R^2\bigr\}
\]
and
\[
\Sigma_{\rm in}:=\overline P\cap\{|x-y|=R/2\}.
\]
On \(\Sigma_{\rm out}\), we have \(\zeta=0\) and \(v\le0\). Equality is possible
only at the tangency point \((t_0,\xi)\), which lies on the terminal face. On
\(\Sigma_{\rm in}\), by compactness and strict positivity of \(\widetilde w\), we have
\(v=-\widetilde w\le -m\) for some \(m>0\). If we choose \(\varepsilon>0\)
sufficiently small, we obtain
\[
v+\varepsilon\zeta\le 0
\qquad\text{on the parabolic boundary of }P.
\]
By the maximum principle,
\[
v+\varepsilon\zeta\le 0
\qquad\text{in }P.
\]
By continuity, this inequality extends to \(\overline P\), which includes the terminal face \(t=t_0\). It follows that for every \(s\in(0,R/2)\),
\[
\frac{(v+\varepsilon\zeta)(t_0,\xi+s\mathbf n(\xi))
-(v+\varepsilon\zeta)(t_0,\xi)}{s}\le 0.
\]
We now pass to the limit \(s\downarrow0\) and apply the continuity of \(\nabla_x v\), which gives
\[
\partial_{\mathbf n}v(t_0,\xi)\le
-\varepsilon\,\partial_{\mathbf n}\zeta(t_0,\xi)<0.
\]
Therefore,
\[
\partial_{\mathbf n}\widetilde w(t_0,\xi)>0.
\]
Recall that $\xi\in\partial\Omega'$ was arbitrary, so the above holds for every
\(\xi\in\partial\Omega'\). We further have
\[
\partial_{\mathbf n}\widetilde w(t_0,\xi)>0
\qquad\forall \xi\in\partial\Omega',
\]
so
\[
\partial_{\mathbf n}w(t_0,\xi)
=e^{Kt_0}\partial_{\mathbf n}\widetilde w(t_0,\xi)>0
\qquad\forall \xi\in\partial\Omega'.
\]

By continuity of $\partial_{\mathbf n}w(t_0,\cdot)$ on compact $\partial\Omega'$, there exists $m>0$ such that
\[
\partial_{\mathbf n}w(t_0,\xi)\ge m\qquad\forall \xi\in\partial\Omega'.
\]
We now apply Lemma~\ref{lem:tubular} to the bounded \(C^2\) domain \(\Omega'\).
Then there exists \(r_0>0\) such that the tubular map
\[
F:\partial\Omega'\times[0,r_0]\to
\{x\in\overline{\Omega'}:\delta(x)\le r_0\},
\qquad
F(\xi,s):=\xi+s\,\mathbf n(\xi),
\]
is well defined and satisfies \(\delta(F(\xi,s))=s\). Since \(w(t_0,\cdot)\in C^1(\overline{\Omega'})\), the map \(x\mapsto \nabla w(t_0,x)\) is continuous on \(\overline{\Omega'}\). It holds that
\[
g(\xi,s):=\nabla w(t_0,F(\xi,s))\cdot\mathbf n(\xi)
\]
is continuous on \(\partial\Omega'\times[0,r_0]\). Because
\[
g(\xi,0)=\partial_{\mathbf n}w(t_0,\xi)\ge m
\qquad\forall \xi\in\partial\Omega',
\]
by compactness, we obtain \(r\in(0,r_0]\) such that
\[
g(\xi,s)\ge \frac m2
\qquad\forall (\xi,s)\in\partial\Omega'\times[0,r].
\]

We fix \(\xi\in\partial\Omega'\). Since \(\partial_sF(\xi,s)=\mathbf n(\xi)\)
and \(w(t_0,\xi)=0\) on \(\partial\Omega'\), by the chain rule, we have for every
\(s\in[0,r]\),
\[
\frac{d}{ds}w(t_0,F(\xi,s))
=
\nabla w(t_0,F(\xi,s))\cdot \mathbf n(\xi)
=
g(\xi,s).
\]
Therefore,
\[
w(t_0,F(\xi,s))
=
\int_0^s g(\xi,\tau)\,d\tau
\ge \frac m2\,s
\qquad\forall s\in[0,r].
\]
Because \(\delta(F(\xi,s))=s\), it follows that
\[
w(t_0,x)\ge \frac m2\,\delta(x)
\qquad\text{for all }x\in\Omega'\text{ with }\delta(x)\le r.
\]
This establishes the estimate with $\alpha:=m/2$. If \(\eta\equiv0\) on a connected component \(\Omega''\), by uniqueness for the homogeneous Dirichlet problem, we have \(w(t,\cdot)\equiv0\) on \(\Omega''\) for all \(t\ge0\). For each component, it suffices to apply the same regularity argument to obtain the \(C^1\)-extension property.
\end{proof}

\section{Shortcut representation of \texorpdfstring{$W_{b,1}$}{Wb,1}}\label{app:Wb1-shortcut}
In this part, we analyse the collapsed-boundary representation from Section~\ref{sec:Wb1}. The construction is the \(p=1\) collapsed-boundary
specialisation of the shortcut/relative-transport interpretation. In particular,
Bate--Garc\'ia Pulido \cite{BGP24} identify the Figalli--Gigli partial-transport geometry with ordinary Wasserstein transport on the one-point
shortcut completion endowed with
\[
d_*(x,y)=\min\{|x-y|,\delta(x)+\delta(y)\},
\]
while Bubenik--Elchesen \cite{BE24} prove the corresponding relative
Kantorovich--Rubinstein duality in the general metric-pair framework. The persistence-diagram antecedent is due to Divol--Lacombe \cite{DL21} (see also \cite[Remark~3.4]{BGP24} for the relation with the partial-transport metric).

\begin{lemma}[The collapsed-boundary metric]\label{lem:dstar-compact}
Let $X_*:=\Omega\sqcup\{\ast\}$ and define $d_*$ by
\[
\begin{aligned}
d_*(x,y)&:=\min\{|x-y|,\delta(x)+\delta(y)\}
&& (x,y\in\Omega),\\
d_*(x,\ast)&:=\delta(x),\\
d_*(\ast,\ast)&:=0.
\end{aligned}
\]
It follows that $d_*$ is a metric on $X_*$, and $(X_*,d_*)$ is compact.
\end{lemma}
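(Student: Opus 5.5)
We verify the metric axioms for \(d_*\) directly from its definition and then prove compactness by exhibiting \(X_*\) as a quotient of the compact set \(\overline\Omega\).

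\emph{Metric axioms.} Symmetry and \(d_*(x,x)=0\) are immediate, since \(d_*(x,x)=\min\{0,2\delta(x)\}=0\). Positivity holds because for \(x\ne y\) in \(\Omega\) both \(|x-y|>0\) and \(\delta(x)+\delta(y)>0\). For \(x\in\Omega\), \(d_*(x,\ast)=\delta(x)>0\).

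For the triangle inequality, it is convenient to extend \(\delta\) by \(\delta(\ast):=0\). Then the formula \(d_*(x,y)=\min\{|x-y|,\delta(x)+\delta(y)\}\) holds for all pairs, provided we read \(|x-\ast|:=+\infty\). We check \(d_*(x,z)\le d_*(x,y)+d_*(y,z)\) by cases on which term realises each minimum on the right.
\begin{itemize}
\item If both terms are Euclidean, the claim follows from the Euclidean triangle inequality.
\item If both terms are of boundary type, the sum is \(\delta(x)+2\delta(y)+\delta(z)\ge\delta(x)+\delta(z)\ge d_*(x,z)\).
\item In the mixed case \(|x-y|+\delta(y)+\delta(z)\), we use that \(\delta\) is \(1\)-Lipschitz, so \(\delta(x)\le|x-y|+\delta(y)\), and hence the sum is at least \(\delta(x)+\delta(z)\).
\end{itemize}
Cases involving \(\ast\) reduce to these, since \(\delta(\ast)=0\) and every term involving \(\ast\) is of boundary type.

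\emph{Compactness.} Define \(q:\overline\Omega\to X_*\) by \(q=\mathrm{Id}\) on \(\Omega\) and \(q\equiv\ast\) on \(\partial\Omega\). We show that \(q\) is \(1\)-Lipschitz from \((\overline\Omega,|\cdot|)\) to \((X_*,d_*)\).
\begin{itemize}
\item For \(x,y\in\Omega\), we have \(d_*\le|x-y|\).
\item For \(x\in\Omega\) and \(y\in\partial\Omega\), we have \(d_*(x,\ast)=\delta(x)\le|x-y|\).
\item For \(x,y\in\partial\Omega\), the distance is \(0\).
\end{itemize}
Thus \(q\) is a continuous surjection from the compact set \(\overline\Omega\), which is compact because \(\Omega\) is bounded, onto \(X_*\). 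Hence \((X_*,d_*)\) is compact.

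The only delicate point is the mixed case of the triangle inequality, which needs the Lipschitz property of \(\delta\). Everything else is bookkeeping.
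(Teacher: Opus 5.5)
Your proof is correct. It reaches the triangle inequality by a different route from the paper, while the compactness part is the paper's argument in cleaner form.

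The paper never checks the triangle inequality by cases. It introduces the complete graph on $X_*$ with edge lengths $|x-y|$ and $\delta(x)$ and proves that the induced shortest-path metric coincides with $d_*$, so the triangle inequality comes for free. The $1$-Lipschitz property of $\delta$ is used there in telescoped form along an arbitrary chain $x_0,\dots,x_{m-1},\ast$, to show that no path reaches $\ast$ more cheaply than the direct edge. You use the same Lipschitz bound only once, in the mixed case $|x-y|+\delta(y)+\delta(z)\ge\delta(x)+\delta(z)$; the other mixed case is symmetric. Your convention $\delta(\ast)=0$, $|x-\ast|=+\infty$ absorbs every case involving $\ast$. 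This is shorter, and you also verify positivity explicitly, which the paper leaves implicit. What the graph argument buys is conceptual: it exhibits $d_*$ as the largest metric dominated by the Euclidean and boundary edge lengths. That is exactly the shortcut-through-the-collapsed-boundary picture used later in that appendix.

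For compactness the content is the same. The paper extracts a Euclidean-convergent subsequence in $\overline\Omega$ and checks $d_*$-convergence according to whether the limit lies in $\Omega$ or on $\partial\Omega$. Your $1$-Lipschitz surjection $q:\overline\Omega\to X_*$ packages this more cleanly. It even shows that $(X_*,d_*)$ is homeomorphic to the quotient $\overline\Omega/\partial\Omega$. Surjectivity onto $\ast$ only needs $\partial\Omega\neq\varnothing$, which holds for a nonempty bounded open set.
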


\begin{proof}
We consider the complete graph with vertex set $X_*$ and edge lengths
\[
\begin{aligned}
\ell(x,y)&:=|x-y| && (x,y\in\Omega),\\
\ell(x,\ast)&:=\delta(x),\\
\ell(\ast,\ast)&:=0.
\end{aligned}
\]
Let $d_G$ be the associated shortest-path metric. For $x,y\in\Omega$, any path from $x$ to $y$ that does not pass through $\ast$ has length at least
$|x-y|$ by the Euclidean triangle inequality, and an arbitrary path from $x$ to $y$ that passes through $\ast$ has length at least $\delta(x)+\delta(y)$. To estimate paths from $x$ to $\ast$, it suffices to consider paths where $\ast$ appears only at the final vertex. If a path reaches $\ast$ earlier, then the deletion of the remaining loop does not increase its length. Let
\[
x=x_0,x_1,\dots,x_{m-1}\in\Omega,\qquad x_m=\ast.
\]
If we write $\delta(\ast):=0$ and use that $\delta$ is $1$-Lipschitz on $\Omega$, we obtain
\[
\begin{aligned}
\sum_{i=1}^{m}\ell(x_{i-1},x_i)
&=\sum_{i=1}^{m-1}|x_{i-1}-x_i|+\delta(x_{m-1})\\
&\ge \sum_{i=1}^{m-1}|\delta(x_{i-1})-\delta(x_i)|+\delta(x_{m-1})\\
&\ge \delta(x_0)=\delta(x).
\end{aligned}
\]
We infer that every path from $x$ to $\ast$ has length at least $\delta(x)$, and analogously, every path from $\ast$ to $y$ has length at least $\delta(y)$.
On the other hand, the one-edge path $x\to y$ has length $|x-y|$, and the two-edge path $x\to\ast\to y$ has length $\delta(x)+\delta(y)$. We conclude that
\[
d_G(x,y)=\min\{|x-y|,\delta(x)+\delta(y)\}=d_*(x,y).
\]
It is also true that $d_G(x,\ast)=\delta(x)=d_*(x,\ast)$, so $d_*=d_G$. Therefore, $d_*$ is a metric.

To prove compactness, let $(u_k)$ be an arbitrary sequence in $X_*$. If $u_k=\ast$ for infinitely many $k$, then a constant subsequence converges. Otherwise, after passing to a subsequence, $u_k=x_k\in\Omega$ for all $k$. Given that $\overline\Omega$ is compact, after passing to a further subsequence, we may assume $x_k\to \bar x\in\overline\Omega$. If $\bar x\in\Omega$, then
\[
d_*(x_k,\bar x)\le |x_k-\bar x|\to0.
\]
If $\bar x\in\partial\Omega$, then
\[
d_*(x_k,\ast)=\delta(x_k)\le |x_k-\bar x|\to0.
\]
It follows that every sequence in $X_*$ admits a convergent subsequence, so $(X_*,d_*)$ is compact.
\end{proof}

\begin{proposition}[Reduction of \texorpdfstring{$W_{b,1}$}{Wb,1} to a standard \texorpdfstring{$W_1$}{W1} problem]\label{prop:Wb1-augmented}
Let \((X_*,d_*)\) be as in Lemma~\ref{lem:dstar-compact}.
For $\mu,\nu\in\mathcal M(\Omega)$, we define
\[
\widetilde\mu:=\mu+\nu(\Omega)\,\delta_\ast,
\qquad
\widetilde\nu:=\nu+\mu(\Omega)\,\delta_\ast.
\]
It follows that
\[
W_{b,1}(\mu,\nu)=W_{1,d_*}(\widetilde\mu,\widetilde\nu),
\]
where \(W_{1,d_*}\) denotes the \(1\)-Wasserstein distance for finite Borel measures of equal total mass.
\end{proposition}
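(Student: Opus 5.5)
We prove the two inequalities \(W_{1,d_*}(\widetilde\mu,\widetilde\nu)\le W_{b,1}(\mu,\nu)\) and \(W_{b,1}(\mu,\nu)\le W_{1,d_*}(\widetilde\mu,\widetilde\nu)\) separately, by converting admissible plans for \(W_{b,1}\) into couplings on \(X_*\) and back. Note first that \(\widetilde\mu(X_*)=\widetilde\nu(X_*)=\mu(\Omega)+\nu(\Omega)\), so \(W_{1,d_*}(\widetilde\mu,\widetilde\nu)\) is a standard Kantorovich problem on the compact metric space \((X_*,d_*)\) of Lemma~\ref{lem:dstar-compact}. By compactness, optimal couplings exist there.

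\emph{Upper bound for \(W_{1,d_*}\).} Take \(\gamma\in\Adm(\mu,\nu)\). By Remark~\ref{rem:discard-bdrybdry}, we may assume \(\gamma(\partial\Omega\times\partial\Omega)=0\). Define the collapse map \(q:\overline\Omega\to X_*\) by \(q=\mathrm{Id}\) on \(\Omega\) and \(q\equiv\ast\) on \(\partial\Omega\). This map is Borel, and it is \(1\)-Lipschitz from \(|\cdot|\) to \(d_*\), since \(d_*(x,y)\le|x-y|\) and \(\delta(x)\le|x-\xi|\) for \(\xi\in\partial\Omega\). Then \((q\times q)_\#\gamma\) has first marginal \(\mu+a\,\delta_\ast\), where
\[
a:=\gamma(\partial\Omega\times\Omega)\le\nu(\Omega).
\]
Similarly, its second marginal is \(\nu+b\,\delta_\ast\) with \(b:=\gamma(\Omega\times\partial\Omega)\le\mu(\Omega)\). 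Adding the further mass
\[
\bigl(\mu(\Omega)-b\bigr)\delta_{(\ast,\ast)},
\]
which carries zero cost, produces a coupling of \(\widetilde\mu\) and \(\widetilde\nu\). The total masses agree: \(\mu(\Omega)+a=\nu(\Omega)+b\) because both equal \(\gamma(\overline\Omega\times\overline\Omega)\), so the added amount works for both marginals. The cost of this coupling is at most \(\int|x-y|\,d\gamma\). Taking the infimum over \(\gamma\) gives the claimed inequality.

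\emph{Upper bound for \(W_{b,1}\).} Let \(\pi\) be an optimal coupling of \(\widetilde\mu\) and \(\widetilde\nu\) on \(X_*\times X_*\). We lift \(\pi\) to an admissible plan, handling each piece of the domain separately.

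On \(\Omega\times\Omega\), split the region into \(A:=\{|x-y|\le\delta(x)+\delta(y)\}\) and its complement.
\begin{itemize}
\item On \(A\), keep the pair \((x,y)\) as is.
\item On the complement, replace \((x,y)\) by two pieces routed through the boundary, \((x,\beta_m(x))\) and \((\beta_m(y),y)\), using the Borel selection of Lemma~\ref{lem:approx-boundary-selection}.
\end{itemize}
On \(\Omega\times\{\ast\}\), send \(x\) to \(\beta_m(x)\). On \(\{\ast\}\times\Omega\), send \(\beta_m(y)\) to \(y\). Discard the \((\ast,\ast)\) part entirely.

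The resulting measure \(\gamma_m\) on \(\overline\Omega\times\overline\Omega\) has interior marginals \(\mu\) and \(\nu\), since the boundary points added by the rerouting never touch \(\Omega\). Its cost is at most \((1+1/m)\int d_*\,d\pi\). Letting \(m\to\infty\) gives the reverse inequality.

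\emph{Main obstacle.} The step that needs care is the measurability and bookkeeping of the lift. One must check that the splitting set \(A\) is Borel, which follows from continuity of \(\delta\). One must also verify that the pushforwards under \((\mathrm{Id},\beta_m)\) and \((\beta_m,\mathrm{Id})\) reproduce exactly \(\mu\) and \(\nu\) when restricted to Borel sets \(E\subset\Omega\). This comes down to the marginal identities of \(\pi\) restricted to \(\Omega\), together with the fact that \(\beta_m\) takes values in \(\partial\Omega\). The rest is routine.
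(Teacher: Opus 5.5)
Your proposal is correct and follows essentially the same route as the paper. Your collapse-map pushforward $(q\times q)_\#\gamma$ plus the atom $(\mu(\Omega)-b)\,\delta_{(\ast,\ast)}$ is exactly the paper's coupling, since $\mu(\Omega)-b=\gamma(\Omega\times\Omega)$. Your $\beta_m$-lift, split along $A$ and its complement, is the paper's $\gamma_m$. The only cosmetic difference is that you start from an optimal coupling, which exists by compactness, whereas the paper takes an arbitrary coupling and then passes to the infimum.
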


\begin{proof}
Notice that the topology induced by \(d_*\) on \(\Omega\) agrees with the Euclidean topology. If \(x\in\Omega\) and \(0<r<\delta(x)\), then
\[
B_{d_*}(x,r)=B^{\mathrm{eucl}}(x,r).
\]
The Borel \(\sigma\)-algebra on \(\Omega\) inherited from \(X_*\) is the usual
Euclidean Borel \(\sigma\)-algebra, so we may interpret \(\mu\) and \(\nu\) canonically as finite Borel measures on \(X_*\) supported in \(\Omega\).

We first prove
\[
W_{1,d_*}(\widetilde\mu,\widetilde\nu)\le W_{b,1}(\mu,\nu).
\]
To this end, fix $\gamma\in\Adm(\mu,\nu)$ and, by Remark~\ref{rem:discard-bdrybdry}, we may discard boundary--boundary mass. We write
\[
\begin{aligned}
\gamma_{II}&:=\gamma\restriction_{\Omega\times\Omega},\\
\gamma_{IB}&:=\gamma\restriction_{\Omega\times\partial\Omega},\\
\gamma_{BI}&:=\gamma\restriction_{\partial\Omega\times\Omega}.
\end{aligned}
\]
Let $\iota:\Omega\hookrightarrow X_*$ be the inclusion map and define a transport plan $\pi$ on $X_*\times X_*$ by
\[
\begin{aligned}
\pi:=\,&(\iota,\iota)_\#\gamma_{II}
+(\iota,\ast)_\#\gamma_{IB}\\
&+(\ast,\iota)_\#\gamma_{BI}
+\gamma_{II}(\Omega\times\Omega)\,\delta_{(\ast,\ast)}.
\end{aligned}
\]
It is straightforward to check directly that \(\pi\) is a coupling of \(\widetilde\mu\) and \(\widetilde\nu\). Moreover,
\[
\int_{X_*\times X_*} d_*(u,v)\,d\pi(u,v)
\le \int_{\overline\Omega\times\overline\Omega}|x-y|\,d\gamma(x,y).
\]
because $d_*(x,y)\le |x-y|$ on $\Omega\times\Omega$ and $d_*(x,\ast)=\delta(x)\le |x-b|$ for every $b\in\partial\Omega$. We take the infimum over $\gamma$ and obtain the inequality in question.

We now establish the reverse inequality. Let $\pi$ be any coupling of $\widetilde\mu$ and $\widetilde\nu$ on $X_*\times X_*$. We set
\[
I_\pi:=\int_{X_*\times X_*} d_*(u,v)\,d\pi(u,v).
\]
We also introduce
\[
\begin{aligned}
\pi_{II}&:=\pi\restriction_{\Omega\times\Omega},\\
\pi_{I\ast}&:=\pi\restriction_{\Omega\times\{\ast\}},\\
\pi_{\ast I}&:=\pi\restriction_{\{\ast\}\times\Omega}.
\end{aligned}
\]
Define
\[
\begin{aligned}
A&:=\{(x,y)\in\Omega\times\Omega:\ |x-y|\le \delta(x)+\delta(y)\},\\
B&:=(\Omega\times\Omega)\setminus A.
\end{aligned}
\]
For \(m\in\mathbb N\), let \(\beta_m:\Omega\to\partial\Omega\) be the Borel map from Lemma~\ref{lem:approx-boundary-selection}, and set
\[
\begin{aligned}
\alpha&:= (\pi_{II}\restriction_B)_1 + (\pi_{I\ast})_1,\\
\beta&:= (\pi_{II}\restriction_B)_2 + (\pi_{\ast I})_2.
\end{aligned}
\]
We define
\[
\begin{aligned}
\gamma_m:=\,&\pi_{II}\restriction_A
+(\mathrm{Id},\beta_m)_\#\alpha\\
&+(\beta_m,\mathrm{Id})_\#\beta.
\end{aligned}
\]
This implies that $\gamma_m\in\Adm(\mu,\nu)$. Indeed, its first marginal on $\Omega$ is
\[
(\pi_{II}\restriction_A)_1+\alpha
=(\pi_{II})_1+(\pi_{I\ast})_1
=\widetilde\mu\restriction_\Omega
=\mu,
\]
and, analogously, its second marginal on $\Omega$ equals $\nu$.

For the cost, we apply the fact that $d_*(x,y)=|x-y|$ on $A$ and $d_*(x,y)=\delta(x)+\delta(y)$ on $B$, and obtain
\begin{align*}
\int |x-y|\,d\gamma_m
&=
\int_A |x-y|\,d\pi_{II}\\
&\quad +\int_\Omega |x-\beta_m(x)|\,d\alpha(x)\\
&\quad +\int_\Omega |\beta_m(y)-y|\,d\beta(y)\\
&\le \int_A d_*(x,y)\,d\pi_{II}\\
&\quad +\Bigl(1+\frac1m\Bigr)\int_\Omega \delta(x)\,d\alpha(x)\\
&\quad +\Bigl(1+\frac1m\Bigr)\int_\Omega \delta(y)\,d\beta(y)\\
&\le \Bigl(1+\frac1m\Bigr) I_\pi.
\end{align*}
Hence
\[
W_{b,1}(\mu,\nu)\le \Bigl(1+\frac1m\Bigr) I_\pi.
\]
We first take the infimum over $\pi$ and then let $m\to\infty$ to obtain
\[
W_{b,1}(\mu,\nu)\le W_{1,d_*}(\widetilde\mu,\widetilde\nu).
\]
\end{proof}

Conceptually, the mechanism works as follows. We may encode the defect in total mass as ordinary mass placed at the collapsed boundary point \(\ast\). The boundary-reservoir geometry is then converted into an ordinary \(W_1\)-problem on the shortcut space. This reduction gives the dual formula \eqref{eq:KR-Wb1}, which is the only static feature from the shortcut representation that we need for the endpoint semigroup estimate.

\begin{proof}[Proof of Theorem~\ref{thm:KR-Wb1}]
By Lemma~\ref{lem:dstar-compact} and Proposition~\ref{prop:Wb1-augmented}, the
measures \(\widetilde\mu\) and \(\widetilde\nu\) have the same finite mass
\(m:=\mu(\Omega)+\nu(\Omega)\). If \(m=0\), the dual formula follows immediately. If
\(m>0\), we apply the standard Kantorovich--Rubinstein duality to the probability
measures \(m^{-1}\widetilde\mu\) and \(m^{-1}\widetilde\nu\) on the compact
metric space \((X_*,d_*)\), and then multiply by \(m\). This yields
\[
W_{b,1}(\mu,\nu)
=
\sup_{\Lip_{d_*}(f)\le1}\int_{X_*} f\,d(\widetilde\mu-\widetilde\nu).
\]
If we subtract the constant $f(\ast)$, we may restrict to functions that satisfy $f(\ast)=0$. If $\Lip_{d_*}(f)\le1$ and $f(\ast)=0$, then $\varphi:=f\restriction_\Omega$ satisfies
\[
|\varphi(x)|\le d_*(x,\ast)=\delta(x)\qquad\forall x\in\Omega,
\]
which implies that $\varphi$ extends continuously to $\overline\Omega$ by setting $\varphi=0$ on $\partial\Omega$. For $x,y\in\Omega$,
\[
|\varphi(x)-\varphi(y)|\le d_*(x,y)\le |x-y|,
\]
If $x\in\Omega$ and $\xi\in\partial\Omega$, then
\[
|\varphi(x)-\varphi(\xi)|=|\varphi(x)|\le \delta(x)\le |x-\xi|.
\]
Finally, \(\varphi\equiv0\) on \(\partial\Omega\). We conclude that \(\Lip(\varphi)\le1\)
on \(\overline\Omega\).

For the converse, if $\varphi\in C(\overline\Omega)$, $\varphi|_{\partial\Omega}=0$, and $\Lip(\varphi)\le1$, we define $f$ on $X_*$ by $f\restriction_\Omega=\varphi$ and $f(\ast)=0$. For every $x\in\Omega$ and $\xi\in\partial\Omega$,
\[
|\varphi(x)|=|\varphi(x)-\varphi(\xi)|\le |x-\xi|,
\]
so taking the infimum over $\xi\in\partial\Omega$ gives $|\varphi(x)|\le\delta(x)$. We then have
\[
|f(x)-f(\ast)|=|\varphi(x)|\le \delta(x)=d_*(x,\ast),
\]
and
\[
\begin{aligned}
|f(x)-f(y)|
&=|\varphi(x)-\varphi(y)|\\
&\le |x-y|,\\
|f(x)-f(y)|
&\le |f(x)-f(\ast)|+|f(\ast)-f(y)|\\
&\le \delta(x)+\delta(y),
\end{aligned}
\]
so
\[
|f(x)-f(y)|\le \min\{|x-y|,\delta(x)+\delta(y)\}=d_*(x,y),
\]
which implies $\Lip_{d_*}(f)\le1$. Finally, because $f(\ast)=0$,
\[
\int_{X_*} f\,d(\widetilde\mu-\widetilde\nu)=\int_\Omega \varphi\,d(\mu-\nu).
\]
This proves the dual formula.
\end{proof}

\section{Connected components of \texorpdfstring{$C^2$}{C2} domains}\label{app:components}
\begin{proof}[Proof of Lemma~\ref{lem:components}]
By the one-sided \(C^2\) convention we introduced in Section~\ref{sec:prelim},
\(\partial\Omega\) is a compact embedded \(C^2\) hypersurface and, for every
\(\xi\in\partial\Omega\), there exists a neighbourhood \(U_\xi\) such that
\(U_\xi\cap\Omega\) is connected. Once the boundary is flattened, \(U_\xi\cap\Omega\) is represented by a half-ball. By compactness, there exist
\(\xi_1,\dots,\xi_N\in\partial\Omega\) such that
\[
\partial\Omega\subset \bigcup_{j=1}^N U_{\xi_j}.
\]
Let \(\Omega_k\) be a connected component of \(\Omega\). Given that \(\Omega_k\) is bounded and connected, \(\partial\Omega_k\neq\emptyset\) holds. Moreover
\(\partial\Omega_k\subset\partial\Omega\). Indeed, if
\(x\in\partial\Omega_k\cap\Omega\), then \(x\notin\Omega_k\) because \(\Omega_k\) is open, so \(x\in\Omega\setminus\Omega_k\). Since \(\Omega_k\) is closed in \(\Omega\), the set \(\Omega\setminus\Omega_k\) is open in \(\Omega\). This implies that there exists \(r>0\) such that \(B_r(x)\cap\Omega\subset\Omega\setminus\Omega_k\),
which contradicts \(x\in\partial\Omega_k\). Therefore, it must be true that \(\partial\Omega_k\cap\Omega=\varnothing\), and thus \(\partial\Omega_k\subset\partial\Omega\). Let us fix \(\xi\in\partial\Omega_k\), then \(\xi\in U_{\xi_j}\) for some \(j\), and since \(\xi\in\partial\Omega_k\),
the set \(\Omega_k\) intersects \(U_{\xi_j}\cap\Omega\). As
\(U_{\xi_j}\cap\Omega\) is connected, it can intersect at most one connected
component of \(\Omega\). We conclude that \(\Omega\) has at most \(N\) connected
components.

Moreover, each \(\xi\in\partial\Omega\) belongs to the boundary of exactly one
connected component of \(\Omega\). Indeed, recall that \(U_\xi\cap\Omega\) is connected, so all interior points of \(\Omega\) sufficiently close to \(\xi\) belong to the same connected component. We now show that the closures of distinct connected
components are disjoint. To this end, suppose \(x\in\overline{\Omega_i}\cap\overline{\Omega_j}\) with \(i\neq j\). If \(x\in\Omega\), then \(x\in\overline{\Omega_j}\cap\Omega\), and since \(\Omega_j\) is closed in the relative topology of \(\Omega\), this implies \(x\in\Omega_j\). Analogously, we have \(x\in\Omega_i\), which contradicts \(\Omega_i\cap\Omega_j=\varnothing\). It follows that \(x\in\partial\Omega\). But in the previous paragraph, we showed that each point of \(\partial\Omega\) belongs to the boundary of exactly one connected component of \(\Omega\), whereas \(x\in\partial\Omega_i\cap\partial\Omega_j\). This contradiction proves that \(\overline{\Omega_i}\cap\overline{\Omega_j}=\varnothing\).

Each connected component \(\Omega_k\) is itself a bounded \(C^2\) domain. To see this, suppose that \(\xi\in\partial\Omega_k\). By the previous argument, this implies that there exists \(j\) such that \(\xi\in U_{\xi_j}\) and \(U_{\xi_j}\cap\Omega=U_{\xi_j}\cap\Omega_k\). Therefore, in the same boundary-flattening chart in which \(\partial\Omega\) is represented by a \(C^2\) graph, \(\partial\Omega_k\) is represented by that same graph near \(\xi\).

Finally, we fix \(x\in\Omega_j\). From \(\partial\Omega_j\subset\partial\Omega\), we have
\[
\dist(x,\partial\Omega)\le \dist(x,\partial\Omega_j).
\]
For the other direction, let \(y\in\partial\Omega\). If \(y\in\partial\Omega_j\), then
\[
\dist(x,\partial\Omega_j)\le |x-y|.
\]
If \(y\notin\partial\Omega_j\), then the segment that joins \(x\) to \(y\) exits
\(\Omega_j\). This means that there exists \(z\in\partial\Omega_j\) on this segment, so
\[
\dist(x,\partial\Omega_j)\le |x-z|\le |x-y|.
\]
We now take the infimum over \(y\in\partial\Omega\) and obtain
\[
\dist(x,\partial\Omega_j)\le \dist(x,\partial\Omega).
\]
The opposite inequality follows immediately from
\(\partial\Omega_j\subset\partial\Omega\). We conclude that
\[
\dist(x,\partial\Omega_j)=\dist(x,\partial\Omega)=\delta(x).
\]
\end{proof}

\section{Metric properties of \texorpdfstring{$W_{b,p}$}{Wb,p}}\label{app:Wbp-metric-proof}
The metric property of \(W_{b,p}\) also follows from the general metric-pair
framework of \cite{Che} and from the geometric realisation discussed in
\cite{BGP24}. For completeness, we include here a short direct proof to fix the normalisation that we used in the paper.

\begin{proof}[Proof of Lemma~\ref{lem:Wbp-metric}]
By Lemma~\ref{lem:adm-nonempty}, admissible plans exist for every pair $(\mu,\nu)$ and $W_{b,p}$ is finite. It remains to verify symmetry, separation, and the triangle inequality. Symmetry follows immediately. Indeed, if $\gamma\in\Adm(\mu,\nu)$ and $\tau(x,y):=(y,x)$, then $\tau_\#\gamma\in\Adm(\nu,\mu)$ and
\[
\int |x-y|^p\,d(\tau_\#\gamma)=\int |x-y|^p\,d\gamma.
\]
Moreover, $(\mathrm{Id},\mathrm{Id})_\#\mu\in\Adm(\mu,\mu)$ holds, so $W_{b,p}(\mu,\mu)=0$.

To prove separation, let us assume that $W_{b,p}(\mu,\nu)=0$. We further choose $\gamma_k\in\Adm(\mu,\nu)$ such that
\[
\int |x-y|^p\,d\gamma_k \xrightarrow[k\to\infty]{} 0.
\]
We now set
\[
\begin{multlined}
\tilde\gamma_k:=\gamma_k\restriction_{(\overline\Omega\times\overline\Omega)\setminus
(\partial\Omega\times\partial\Omega)}.
\end{multlined}
\]
It follows that $\tilde\gamma_k\in\Adm(\mu,\nu)$ and
\[
\int |x-y|^p\,d\tilde\gamma_k\le \int |x-y|^p\,d\gamma_k \xrightarrow[k\to\infty]{} 0.
\]
We fix \(\varphi\in C_c^1(\Omega)\), and let \(\bar\varphi\) denote its extension by \(0\) to \(\overline\Omega\). From $\bar\varphi=0$ on $\partial\Omega$ and the fact that $\tilde\gamma_k$ is admissible, we infer
\[
\int_\Omega \varphi\,d(\mu-\nu)
=\int_{\overline\Omega\times\overline\Omega} \bigl(\bar\varphi(x)-\bar\varphi(y)\bigr)\,d\tilde\gamma_k(x,y).
\]
We then obtain
\[
\left|\int_\Omega \varphi\,d(\mu-\nu)\right|
\le \Lip(\bar\varphi)\int |x-y|\,d\tilde\gamma_k.
\]
If $p>1$, by H\"older,
\[
\begin{aligned}
\int |x-y|\,d\tilde\gamma_k
&\le \tilde\gamma_k(\overline\Omega\times\overline\Omega)^{1-1/p}\\
&\quad \times \left(\int |x-y|^p\,d\tilde\gamma_k\right)^{1/p}.
\end{aligned}
\]
Moreover, $\tilde\gamma_k(\partial\Omega\times\partial\Omega)=0$, so
\[
\begin{aligned}
\tilde\gamma_k(\overline\Omega\times\overline\Omega)
&=\tilde\gamma_k(\Omega\times\Omega)+\tilde\gamma_k(\Omega\times\partial\Omega)\\
&\quad +\tilde\gamma_k(\partial\Omega\times\Omega)\\
&\le \mu(\Omega)+\nu(\Omega).
\end{aligned}
\]
For $p=1$, it suffices to use directly $\int |x-y|\,d\tilde\gamma_k\to0$. We conclude that $\int_\Omega \varphi\,d(\mu-\nu)=0$ for every $\varphi\in C_c^1(\Omega)$.
Since \(C_c^\infty(\Omega)\subset C_c^1(\Omega)\) is uniformly dense in \(C_c(\Omega)\), vanishing against \(C_c^1(\Omega)\) implies vanishing against all of \(C_c(\Omega)\), which describes the equality of finite Radon measures on \(\Omega\). It follows that $\mu=\nu$.

For the triangle inequality, let us fix $\gamma_{01}\in\Adm(\mu,\nu)$ and $\gamma_{12}\in\Adm(\nu,\sigma)$. Also, let
\[
\begin{aligned}
m_{01}&:=((\pi_2)_\#\gamma_{01})\restriction_{\partial\Omega},\\
m_{12}&:=((\pi_1)_\#\gamma_{12})\restriction_{\partial\Omega},
\end{aligned}
\]
and let $\Delta:\partial\Omega\to\overline\Omega\times\overline\Omega$ be $\Delta(b):=(b,b)$. We define
\[
\hat\gamma_{01}:=\gamma_{01}+\Delta_\# m_{12},\qquad
\hat\gamma_{12}:=\gamma_{12}+\Delta_\# m_{01}.
\]
The added terms are supported on $\partial\Omega\times\partial\Omega$, so
$\hat\gamma_{01}\in\Adm(\mu,\nu)$ and $\hat\gamma_{12}\in\Adm(\nu,\sigma)$.
Notice that they do not change transport costs because $|b-b|=0$ on $\partial\Omega$:
\[
\begin{aligned}
\int |x-y|^p\,d\hat\gamma_{01}&=\int |x-y|^p\,d\gamma_{01},\\
\int |x-y|^p\,d\hat\gamma_{12}&=\int |x-y|^p\,d\gamma_{12}.
\end{aligned}
\]
By construction, we have
\[
(\pi_2)_\#\hat\gamma_{01}=(\pi_1)_\#\hat\gamma_{12}
\]
as finite Borel measures on $\overline\Omega$.
We use here \(\pi_{1,2},\pi_{2,3},\pi_{1,3}:\overline\Omega^3\to\overline\Omega^2\)
to denote the coordinate projections. By the standard gluing lemma
\cite[Lemma~5.3.2]{AGS08}, there exists a finite Borel measure $\Pi$ on $\overline\Omega^3$ with
\[
\begin{aligned}
(\pi_{1,2})_\#\Pi&=\hat\gamma_{01},\\
(\pi_{2,3})_\#\Pi&=\hat\gamma_{12}.
\end{aligned}
\]
Let us set $\gamma_{02}:=(\pi_{1,3})_\#\Pi$, so $\gamma_{02}\in\Adm(\mu,\sigma)$. We apply $|x-z|\le |x-y|+|y-z|$ and Minkowski in $L^p(\Pi)$ to obtain:
\[
\begin{aligned}
\left(\int |x-z|^p\,d\gamma_{02}\right)^{1/p}
&\le \left(\int |x-y|^p\,d\hat\gamma_{01}\right)^{1/p}\\
&\quad +\left(\int |y-z|^p\,d\hat\gamma_{12}\right)^{1/p}.
\end{aligned}
\]
Therefore,
\[
\begin{aligned}
W_{b,p}(\mu,\sigma)
&\le \left(\int |x-y|^p\,d\gamma_{01}\right)^{1/p}\\
&\quad +\left(\int |y-z|^p\,d\gamma_{12}\right)^{1/p}.
\end{aligned}
\]
It suffices to take the infimum over $\gamma_{01}$ and $\gamma_{12}$ to arrive at
$W_{b,p}(\mu,\sigma)\le W_{b,p}(\mu,\nu)+W_{b,p}(\nu,\sigma)$.
\end{proof}

\section*{Data Availability}
No datasets were generated or analysed during the current study.

\section*{Conflict of interest}
The author reports no conflict of interest.

\bibliographystyle{unsrt}
\bibliography{open3}

\end{document}